\newcommand{\Z}{\mathbb{Z}}
\newcommand{\N}{\mathbb{N}}
\newcommand{\R}{\mathbb{R}}
\newcommand{\C}{\mathbb{C}}
\renewcommand{\L}{\mathsf{L}^2}
\renewcommand{\H}{\mathscr{H}}
\newcommand{\e}{\mathrm{e}}
\newcommand{\al}{\alpha}
\newcommand{\be}{\beta}
\newcommand{\ga}{\gamma}
\newcommand{\gaga}{_{\ga,\wt\ga}}
\renewcommand{\d}{\,\mathrm{d}}
\newcommand{\cupl}{\bigcup\limits}
\newcommand{\wt}{\widetilde}
\newcommand{\wh}{\widehat}
\newcommand{\ceq}{\coloneqq}
\theoremstyle{plain}
\newtheorem{theorem}{Theorem}[section]
\newtheorem*{theorem*}{Theorem}
\newtheorem{lemma}[theorem]{Lemma}
\newtheorem*{lemma*}{Lemma}
\newtheorem{proposition}[theorem]{Proposition}
\newtheorem{corollary}[theorem]{Corollary}
\theoremstyle{remark}
\newtheorem{remark}[theorem]{Remark}
\newtheorem*{remark*}{Remark}
\newtheorem*{example*}{Example}
\theoremstyle{definition}
\numberwithin{equation}{section}
\numberwithin{figure}{section}
\newcommand{\black}{\color{black}}
\title{Quantum lattice transport along an infinitely extended perturbation}
\author[M.~Baradaran]{Marzieh Baradaran}
\address[M.~Baradaran]{Department of Physics, Faculty of Science, University of Hradec Kr\'{a}lov\'{e}, Rokitansk\'eho 62, 50003 Hradec Kr\'alov\'e, Czech Republic}
\email{marzieh.baradaran@uhk.cz}
\author[P.~Exner]{Pavel Exner}
\address[P.~Exner]{Doppler Institute for Mathematical Physics and Applied Mathematics, Czech Technical University,  B\v rehov\'a 7, 11519 Prague, Czechia \\ and Department of Theoretical Physics, NPI, Academy of Sciences, Hlavn\'{\i}130, 25068 \v{R}e\v{z} near Prague, Czechia}
\email{exner@ujf.cas.cz}
\author[A.~Khrabustovskyi]{Andrii Khrabustovskyi}
\address[A.~Khrabustovskyi]{Department of Physics, Faculty of Science, University of Hradec Kr\'{a}lov\'{e}, Rokitansk\'eho 62, 50003 Hradec Kr\'alov\'e, Czech Republic}
\email{andrii.khrabustovskyi@uhk.cz}
\begin{document}
	
	\clearpage
	\maketitle
	\thispagestyle{empty}
	
	\captionsetup[figure]{labelfont={bf},labelformat={default},labelsep=period,name={Fig.}}

	\begin{abstract}
		We consider a periodic quantum graph in the form of a rectangular lattice with the $\delta$-coupling of strength $\gamma$ in the vertices perturbed by changing the latter at an infinite straight array of vertices to a $\widetilde\gamma\ne\gamma$. We analyze the band spectrum of the system and show that it remains preserved as a set provided $\widetilde\gamma>\gamma>0$ while for all the other combinations additional band appear in some or all gaps of the unperturbed system. We also prove that for a randomly chosen positive energy, the probability of existence of a state exponentially localized in the vicinity of the perturbation equals $\frac12$.
	\end{abstract}
		
	%%%%%%%%%%%%%%%%%%%%%%%%%%%%%%%%%%%%%%%

	\section{Introduction}
	\label{section:intro}
	
    Control of particle transport is an essential feature of numerous quantum systems. It concerns, in particular, \emph{waveguide effects} when the motion occurs in a given direction while in the perpendicular plane the system state is localized. This can be achieved in various ways, either by hard confinement or using a suitably chosen potential \cite{EK15}. The latter can be of different sorts, for instance a regular or singular potential `channel' or a more complicated `guide' with no classical analogue such as the infinite \emph{straight polymer} models \cite[Secs.~III.1.5 and III.4]{AGHH05} with arrays of point potentials. Sometimes there is even no transport without the appropriate `guiding' interaction as it is the case with the well-known \emph{Iwatsuka model} \cite{Iw85} where the motion is due to a translationally invariant variation of the magnetic field.

    Our goal here is to investigate a model of a waveguide effect in quantum graphs, specifically a perturbed rectangular lattice with the $\delta$-coupling at the vertices, with the perturbation consisting of modifying the coupling constant along a infinite long straight chain of vertices. Properties of the unperturbed periodic lattice are well known; it has a band spectrum with absolutely continuous bands, plus some flat ones in case that the lattice sides are commensurate. Recently a Bethe-Sommerfeld-type behavior, i.e. the finite number of open gaps, was observed in this model under an appropriate Diophantine condition \cite{ET17}.
	
	Using the translational invariance of the perturbation one can analyze its effect using Floquet-Bloch technique. We prove that in the vicinity of unperturbed band edges, there are families of states which are transversally
    exponentially localized; their energies may or may not be embedded in the unperturbed spectrum. We prove that if the basic $\delta$-interaction is repulsive and the perturbation strengthens the repulsion, the spectrum as a set does not change, while in all the other cases additional bands appear in (some or all) gaps of the unperturbed spectrum. What is even more important, we show that the family of the perturbation-induced states is significant; the probability that such a state exists at a randomly chosen positive energy is $\frac12$.
	
	Let us briefly describe the contents of the paper. In the next section we recall the properties of the model in the unperturbed case. Section~\ref{perturbed,section} is devoted to a detailed discussion of the spectrum which
    arises due to the perturbation; the main results are collected in Theorem~\ref{thmGen} at the end of this section. Finally, in Section~\ref{high,en,section} we inspect the high-energy behavior and compute the measure of the perturbation-induced spectral component.

	%%%%%%%%%%%%%%%%%%%%%%%%%%%%%%%%%%%%%%%%%%%%%%%
	
	\section{Spectrum of the unperturbed periodic system}
	\label{section:2}
	
	We begin the discussion by reviewing spectral properties of the Hamiltonian $\mathscr{H}_\ga$ of a particle confined to a rectangular lattice with a $\delta$-coupling. They have been already investigated in detail \cite{Ex96,EG96,ET17}, thus we only recall the results we need here.
	
	\begin{figure}[h]
		\centering
		\subfloat[ The  periodic (unperturbed) graph  with a $\delta$-coupling of the strength $\gamma$ at all vertices]{{\includegraphics[scale=0.55]{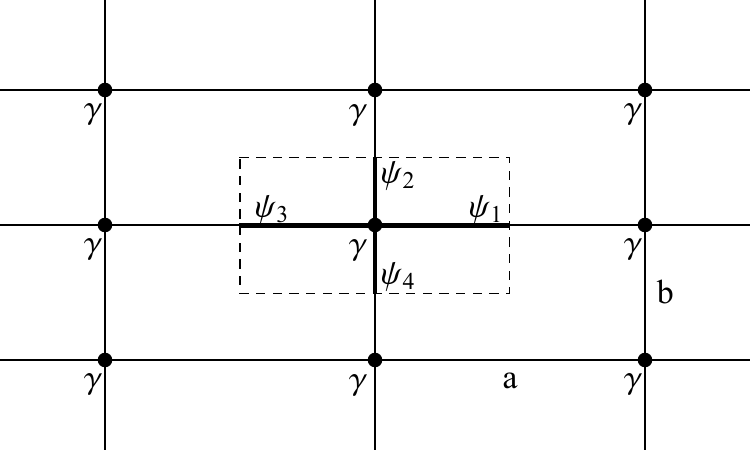} }
			\label{UnitCell-2}}
		\qquad
		\subfloat[ The perturbed graph in which, in a straight chain of vertices, the $\delta$-coupling strength $\gamma$ is replaced by $\widetilde{\gamma}$ ]{\includegraphics[scale=0.55]{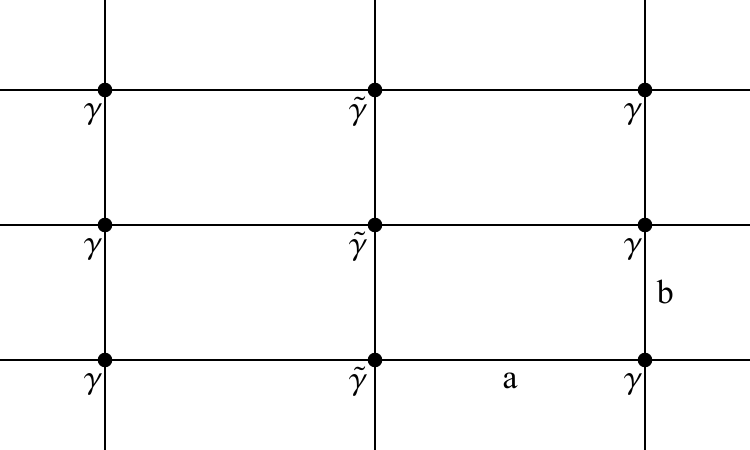} \label{UnitCell-3}}
		\caption{A rectangular lattice }
	\end{figure}
	
	An elementary cell of the rectangular lattice $\Gamma$ of sides $a,b>0$
	consists of a single vertex and four outgoing edges (Fig.~\ref{UnitCell-2}).
	To describe the spectrum $\sigma(\H_\ga)$ of $\H_\ga\,$, one utilizes the Floquet–Bloch theory (see, e.g., \cite[Chapt.~4]{BK13})  reducing the analysis of $\sigma(\H_\ga)$ to the spectral analysis of
	the fiber operators $\H_\ga^{\theta_1,\theta_2}$ on an elementary cell, which are labeled by the quasimomentum components $\theta_1,\theta_2\in [-\pi,\pi]$ indicating how is the phase of the wavefunctions related at the opposite ends of the elementary cell (cf.~\eqref{PerFloq} below).

	We start from the investigation of \emph{the positive spectrum}.
	Since the Hamiltonian acts on the graph edges as the negative Laplacian, the eigenfunctions of $\H_\ga^{\theta_1,\theta_2}$ at energy $E>0$ are linear combinations of the functions $\e^{\pm ikx}$, where $k=E^{1/2}$.
	Choosing the coordinates on the graph edges to increase from left to right and from bottom to top, we employ the following \emph{Ansätze} for the wave function components $\psi_k$  indicated in Fig.~\ref{UnitCell-2}:
% ---------------------------------------------------- %
	\begin{align}\label{PerAnsatz}
		& \psi_{1}(x)=\alpha_{1}^{+}\e^{ikx}+\alpha_{1}^{-}\e^{-ikx},\qquad x\in[0,\tfrac a2],  \nonumber \\[3pt]
		& \psi_{2}(x)=\alpha_{2}^{+}\e^{ikx}+\alpha_{2}^{-}\e^{-ikx},\qquad x\in[0,\tfrac b2],\\[3pt]
		& \psi_{3}(x)=\alpha_{3}^{+}\e^{ikx}+\alpha_{3}^{-}\e^{-ikx},\qquad x\in[-\tfrac a2,0], \nonumber\\[3pt]
		& \psi_{4}(x)=\alpha_{4}^{+}\e^{ikx}+\alpha_{4}^{-}\e^{-ikx},\qquad x\in[-\tfrac b2,0].\nonumber
	\end{align}
% ---------------------------------------------------- %
	The Floquet-Bloch decomposition requires to impose the following conditions at the free ends of the elementary cell,
% ---------------------------------------------------- %
	\begin{align}\label{PerFloq}
		\begin{array}{ll}
			\psi_1 (\tfrac a2 ) =\e^{i \theta_{1} } \psi_3 (-\tfrac a2 ),\; &
			\psi_1'(\tfrac a2 ) =\e^{i \theta_{1}} \psi_3' (-\tfrac a2),
			\\[6pt]
			\psi_2 (\tfrac b2 ) =\e^{i \theta_{2} } \psi_4 (-\tfrac b2 ),\; &
			\psi_2' (\tfrac b2) =\e^{i \theta_{2}} \psi_4' (-\tfrac b2),
		\end{array}
	\end{align}
% ---------------------------------------------------- %
	with $\theta_1$ and $\theta_2$ running through $[-\pi,\pi]$, the Brillouin zone; substituting \eqref{PerAnsatz} into \eqref{PerFloq} allows us to express $\alpha_{1}^{\pm}$ and $\alpha_{2}^{\pm}$ in terms of $\alpha_{3}^{\pm}$ and $\alpha_{4}^{\pm}$, respectively:
% ---------------------------------------------------- %
	\begin{align}\label{coeffFloqPer}
		\alpha_{1}^{+}  =\alpha_{3}^{+} \,\cdot\, \e^{i(\theta_1- a k)}, \quad
		\alpha_{1}^{-}  =\alpha_{3}^{-} \,\cdot\,\e^{i(\theta_1+ a k)},\quad
		\alpha_{2}^{+} =\alpha_{4}^{+}  \,\cdot\, \e^{i(\theta_2- b k)},\quad
		\alpha_{2}^{-}  =\alpha_{4}^{-}  \,\cdot\, \e^{i(\theta_2+ b k)}.
	\end{align}
% ---------------------------------------------------- %
	Furthermore, the $\delta$-coupling of the strength $\gamma$
	implies the following conditions at the vertex:
% ---------------------------------------------------- %
	\begin{align}
		\label{perEqs}
		\psi_1(0)=\psi_2(0)=\psi_3(0)=\psi_4(0), \qquad
		& \psi_1'(0)+\psi_2'(0)-\psi_3'(0)-\psi_4'(0)=\gamma\,\psi_1(0).
	\end{align}
% ---------------------------------------------------- %
	Combining \eqref{PerAnsatz}, \eqref{coeffFloqPer} and \eqref{perEqs},   we get a system of four linear equations for the coefficients $\alpha_{3}^{\pm}$ and $\alpha_{4}^{\pm}$; computing the corresponding determinant and neglecting the non-vanishing multiplicative factor $\e^{i \left(\theta _1+\theta _2\right)} k^2$, we arrive at the spectral condition
% ---------------------------------------------------- %
	\begin{equation}\label{spectral:condition}
		\left( \tau_1 - \cos ak   \right)\sin bk   +
		\left(\tau_2-\cos b k\right)\sin a k =
		{\gamma\over 2k} \,\sin ak\,\sin bk,
	\end{equation}
% ---------------------------------------------------- %
	where $\tau_k\ceq\cos\theta_k,\ k=1,2$.
	Thus,   $E>0$ belongs to $\sigma(\H_\ga)$ iff
	$k=E^{1/2}$ satisfies relation \eqref{spectral:condition} with some $\tau_1,\tau_2\in [-1,1]$.
	
	We distinguish a particular set on momentum values denoting
% ---------------------------------------------------- %
	\begin{gather*}
		\Xi_a\ceq\{k>0:\ \sin ka=0\},
		\quad
		\Xi_b\ceq\{k>0:\ \sin kb=0\},
		\quad
		 \Xi\ceq\Xi_a\cup\Xi_b\,;
	\end{gather*}
% ---------------------------------------------------- %
 If $k$ belongs to $\Xi $, the condition \eqref{spectral:condition} can be satisfied for suitable chosen $\tau_1, \tau_2$. This does not guarantee, however, the existence of $L^2$ eigenfunctions of the operator $\mathscr{H}_\ga$. To this end one needs the `loop bound states' \cite{Ku05}; recall that in contrast to usual Schr\"odinger operators quantum graphs do not have the unique continuation property. Such bound states exist at the \emph{intersection} of the two sets, in other words	
% ---------------------------------------------------- %
	\begin{gather}
		\label{specXi}
		\{E>0:\ E^{1/2}\in \Xi_a\cap\Xi_b\}\subset\sigma_{\rm point}(\H_\ga).
	\end{gather}
% ---------------------------------------------------- %
	Specifically, the numbers $k^2=\frac{n^2\pi^2}{a^2}=\frac{m^2\pi^2}{b^2}$ are eigenvalues of infinite multiplicity, flat bands in physicist's terminology, provided that $\frac ba=\frac mn$ holds for some $m,n\in\mathbb{N}$. Next, assuming that $k\notin\Xi$,
	we can rewrite condition \eqref{spectral:condition} as follows,
% ---------------------------------------------------- %
	\begin{equation}\label{spectral:condition:symmetric}
		{\gamma\over 2k}=F(\tau_1,\tau_2,k),\quad\text{where }
		F(\tau_1,\tau_2,k)\ceq {\tau_1 - \cos ak  \over \sin ak} +
		{\tau_2 - \cos bk  \over \sin bk}.
	\end{equation}
% ---------------------------------------------------- %
	For any fixed $k\in (0,\infty)\setminus\Xi$,
	the function $F(\tau_1,\tau_2,k)$ depends continuously on $\tau_k\in [-1,1]$.
	Furthermore, one can   show (see \cite[Sec.~3.2]{Ex96} for more details)
	that
% ---------------------------------------------------- %
	\begin{gather*}
		\max_{\tau_k\in [-1,1]}F(\tau_1,\tau_2,k)=
		F_+(k),\quad \min_{\tau_k\in [-1,1]}F(\tau_1,\tau_2,k)=
		F_-(k),
	\end{gather*}
	where
	\begin{gather*}
		F_+(k)\ceq \tan\left(\frac {ka}2-\frac{\pi}{2}\left\lfloor\frac {ka}\pi\right\rfloor\right)+ \tan\left(\frac {kb}2-\frac{\pi}{2}\left\lfloor\frac {kb}\pi\right\rfloor\right),
		\\
		F_-(k)\ceq -\cot\left(\frac {ka}2-\frac{\pi}{2}\left\lfloor\frac {ka}2\right\rfloor\right)-\cot\left(\frac {kb}2-\frac{\pi}{2}\left\lfloor\frac {kb}\pi\right\rfloor\right),
	\end{gather*}
% ---------------------------------------------------- %
	in which $\lfloor x \rfloor $ stands for the greatest integer less than or equal to $x$. It is easy to see that 	
% ---------------------------------------------------- %
	\begin{gather}
		\pm F_\pm(k)\ge 0,\quad F_\pm\in C^\infty(\R\setminus\Xi),\quad
		\lim_{k\to k_0\mp 0}F_\pm(k)=\infty,\ \forall k_0\in\Xi, \nonumber \\[-1.5em]  \\[-.5em]
		F_\pm\text{ monotonically increase on each  interval }(k_-,k_+)\subset (0,\infty)\setminus\Xi
		\text{ with }k_\pm\in\Xi. \nonumber
	\end{gather}
% ---------------------------------------------------- %
We denote
	$$\Sigma_\pm \ceq
	\left\{k\in (0,\infty)\setminus\Xi:\ \pm{\ga\over 2k}\leq \pm F_\pm(k)\right\},\quad
	\gamma_*=-4(a^{-1}+b^{-1}).
	$$
	
	The above properties of $F_\pm$ together with \eqref{specXi} and \eqref{spectral:condition:symmetric}
	imply the following statement:

	\begin{proposition}\label{prop:spec:positive}
		
		The positive part of the spectrum of $\H_\ga$ exhibits the following properties:
		
		\begin{enumerate}
			\setlength{\itemsep}{4pt}
			
			\item[(i)]
			The number $E>0$  belongs to the spectrum of $\H_\ga$ with $\pm\ga\ge 0$  iff
			${E}^{1/2}\in \Sigma_\pm  \cup\Xi.$
			
			\item[(ii)]
			In the rectangle sides are commensurate, flat bands occur at the energies $k^2=\frac{n^2\pi^2}{a^2}=\frac{m^2\pi^2}{b^2}$ iff $\frac ba=\frac mn$ holds for some $m,n\in\mathbb{N}$.

			\item[(iii)]
			The spectrum is absolutely continuous (\emph{ac}) in $\Sigma_\pm$ and possible flat bands lie at the edges of \emph{ac} bands.
			
			\item[(iv)]
			The gaps, $(0,\infty)\setminus (\Sigma_\pm  \cup\Xi)$, are a union of
			countably many open intervals  whose closures are pairwise disjoint.
			If the number of these intervals is infinite, they accumulate at infinity.
	
			\item[(v)]
			For $\ga\in (\ga_*,0]$  one has $\inf(\sigma(\H_\ga)\cap (0,\infty))=0$. On the other hand, if $\gamma>0$ or $\ga\le\gamma_*$, one has $\inf(\sigma(\H_\ga)\cap (0,\infty))>0$.
		
			\item[(vi)]  If $(E,E+\delta)\subset\R\setminus \sigma(\H_\ga)$
			with $E \in \sigma(\H_\ga)\cap (0,\infty)$ and $\delta>0$, then
			\begin{gather}\label{gap:leftedge}
				{\ga\over 2E^{1/2}}=F_-(E^{1/2})\;\text{ if }\;\ga<0,
				\quad
				E^{1/2} \in \Xi\;\text{ if }\;\ga>0,
			\end{gather}
			
			\item[(vii)] If $(E-\delta,E)\subset\R\setminus \sigma(\H_\ga)$
			with $E \in \sigma(\H_\ga)\cap (0,\infty)$ and $\delta>0$, then
			\begin{gather}\label{gap:rightedge}
				E^{1/2} \in \Xi\;\text{ if }\;\ga<0,
				\quad
				{\ga\over 2E^{1/2}}=F_+(E^{1/2})\;\text{ if }\;\ga>0.
			\end{gather}
			
		\end{enumerate}

	\end{proposition}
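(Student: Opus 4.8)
The plan is to extract all seven assertions from the scalar reduction \eqref{spectral:condition:symmetric} together with the elementary calculus of the envelopes $F_\pm$. First I would make the Floquet--Bloch equivalence precise: writing $\H_\ga\cong\int^\oplus_{[-\pi,\pi]^2}\H_\ga^{\theta_1,\theta_2}\,\d\theta_1\,\d\theta_2$, a number $E>0$ lies in $\sigma(\H_\ga)$ if and only if $k=E^{1/2}$ solves the determinantal relation \eqref{spectral:condition} for some $\tau_1,\tau_2\in[-1,1]$, and for $k\notin\Xi$ this reads $\frac\ga{2k}=F(\tau_1,\tau_2,k)$. Since $F(\cdot,\cdot,k)$ is continuous on the square $[-1,1]^2$, the intermediate value theorem shows its range to be exactly the interval $[F_-(k),F_+(k)]$; hence the relation is solvable precisely when $\frac\ga{2k}\in[F_-(k),F_+(k)]$.

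Items (i), (ii), (iv) then follow by bookkeeping. Using $\pm F_\pm\ge0$: if $\ga\ge0$ the lower bound $F_-(k)\le0\le\frac\ga{2k}$ is automatic and solvability reduces to $\frac\ga{2k}\le F_+(k)$, i.e.\ $k\in\Sigma_+$, while $\ga\le0$ is the mirror case giving $\Sigma_-$; the set $\Xi$ is appended because, by $\lim_{k\to k_0\mp0}F_\pm(k)=\infty$, every $k_0\in\Xi$ is a one-sided limit of $\Sigma_\pm$ and therefore lies in the closed set $\sigma(\H_\ga)$, which proves (i). The resonant points $\Xi_a\cap\Xi_b$ additionally carry the loop bound states of \eqref{specXi}, nonempty exactly when $b/a$ is rational, which gives (ii). For (iv) I note that $\Sigma_\pm\cup\Xi$ is closed, so its complement in $(0,\infty)$ is a countable union of open intervals; on each component $(k_-,k_+)$ of $(0,\infty)\setminus\Xi$ the monotonicity of $F_\pm$ confines $\Sigma_\pm$ to a single subinterval abutting the endpoint at which $F_\pm\to\pm\infty$, which forces consecutive gaps to be separated by a non-degenerate band (disjoint closures), and since $\Xi$ is locally finite infinitely many gaps can only accumulate at infinity.

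The edge relations (vi)--(vii) I would read off from the geometry on a fixed component $(k_-,k_+)$. For $\ga<0$ one has $\Sigma_-=\{F_-(k)\le\frac\ga{2k}\}$, and as $F_-$ increases monotonically from $-\infty$ at $k_-^+$ the band is $(k_-,k^\ast)$ with $\frac\ga{2k^\ast}=F_-(k^\ast)$; thus its right (upper) edge obeys the first alternative in \eqref{gap:leftedge} and its left (lower) edge sits at $k_-\in\Xi$, the first alternative in \eqref{gap:rightedge}. The case $\ga>0$ is the reflection of this, with $F_+$, the crossing $\frac\ga{2k}=F_+(k)$ as lower edge and $k_+\in\Xi$ as upper edge. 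Finally (v) is a $k\to0^+$ expansion: from $F_+(k)=\tan\frac{ka}2+\tan\frac{kb}2\sim\frac{k(a+b)}2$ and $F_-(k)=-\cot\frac{ka}2-\cot\frac{kb}2\sim-\frac2k\bigl(a^{-1}+b^{-1}\bigr)$ one compares with $\frac\ga{2k}$; for $\ga>0$ the band condition fails near $0$, whereas for $\ga\le0$ it holds near $0$ exactly when $\ga>-4(a^{-1}+b^{-1})=\ga_*$, the threshold coming from matching the $k^{-1}$ coefficients and the borderline $\ga=\ga_*$ falling on the gap side because the next ($O(k)$) term of $F_-$ is positive.

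The step I expect to be the real obstacle is (iii). The analysis above locates $\sigma(\H_\ga)$ only as a point set; to upgrade this to absolute continuity on $\Sigma_\pm$ and to place the flat bands exactly at band edges one must work with the fibers themselves, showing that the dispersion branches $E(\theta_1,\theta_2)$ are real-analytic and nowhere locally constant except at the resonant energies, so that the band functions contribute purely absolutely continuous spectrum. Here I would invoke the analytic-perturbation machinery for periodic quantum graphs from the cited works \cite{Ex96,EG96} rather than reprove it, and separately check that the infinite-multiplicity eigenvalues of \eqref{specXi} occur precisely at the endpoints of the $\Sigma_\pm$-intervals -- which is immediate once (vi)--(vii) identify those endpoints with the points of $\Xi\supset\Xi_a\cap\Xi_b$.
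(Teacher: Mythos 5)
Your overall strategy --- reducing everything to solvability of $\frac{\gamma}{2k}\in[F_-(k),F_+(k)]$ via the range of the continuous function $F(\cdot,\cdot,k)$ on $[-1,1]^2$, appending $\Xi$ through a closure argument, and reading the remaining items off the monotonicity of $F_\pm$ --- is precisely the derivation the paper has in mind: it states the proposition as a consequence of the listed properties of $F_\pm$ together with \eqref{specXi} and \eqref{spectral:condition:symmetric}, deferring all details (including the absolute continuity in (iii), which you likewise delegate) to \cite{Ex96,EG96,ET17}. Items (i), (ii) and (v) are handled correctly, including the borderline case $\gamma=\gamma_*$, which you rightly settle by the positive $O(k)$ correction in the expansion of $F_-$.

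There is, however, a genuine gap in your treatment of (iv) and (vi)--(vii) when $\gamma<0$. You assert that on a component $(k_-,k_+)$ of $(0,\infty)\setminus\Xi$ the band is a single interval $(k_-,k^*)$ terminating at \emph{the} crossing point $\frac{\gamma}{2k^*}=F_-(k^*)$, justified by the monotonicity of $F_-$. But for $\gamma<0$ the comparison function $k\mapsto\frac{\gamma}{2k}$ is \emph{also increasing} (it rises from $\frac{\gamma}{2k_-}$ toward $0$), so you are comparing two increasing functions and monotonicity of $F_-$ alone does not exclude multiple crossings: a priori $\Sigma_-\cap(k_-,k_+)$ could be a union of several intervals, which would break both the disjoint-closures/accumulation claim in (iv) and the claim in (vii) that the right edge of every gap lies in $\Xi$ (to the left of a crossing one must verify the band condition, and $F_-(k')<F_-(k)$ does not give it, since $\frac{\gamma}{2k'}<\frac{\gamma}{2k}$ as well). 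Your picture is complete only for $\gamma>0$, where $\frac{\gamma}{2k}$ decreases. The repair is short: compare $2kF_-(k)$ with the \emph{constant} $\gamma$ instead. Writing the relevant branch of the $a$-term of $F_-$ as $-\cot v$ with $v=\frac{ka}{2}-\frac{\pi}{2}\left\lfloor\frac{ka}{\pi}\right\rfloor\in(0,\frac{\pi}{2})$, one finds
\begin{equation*}
\frac{\mathrm{d}}{\mathrm{d}k}\bigl(-2k\cot v\bigr)=-2\cot v+ka\,\csc^2 v\;\ge\;\frac{2v-\sin 2v}{\sin^2 v}>0,
\end{equation*}
since $ka\ge 2v$, and likewise for the $b$-term; hence $k\mapsto 2kF_-(k)$ is strictly increasing on each component, so $\{k:\,2kF_-(k)\le\gamma\}$ is a single interval abutting $k_-$, yielding the unique crossing and completing (iv) and (vii). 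A smaller point: in (iii), your claim that the flat bands sit at band edges ``is immediate from (vi)--(vii)'' is logically backwards --- those items say band edges lie in $\Xi$, not that points of $\Xi_a\cap\Xi_b$ are band edges; instead one should note that at $k_0\in\Xi_a\cap\Xi_b$ the envelopes reset, $F_+\to 0^+$ and $F_-\to 0^-$ on one side of $k_0$, so for $\gamma\ne 0$ the band condition fails there while the divergence of $F_\pm$ makes it hold on the other side.
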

		
	\begin{remark} \label{rem:Bethe-Sommerfeld}	
    The number of open gaps depends on the ratio $\frac ba$. It is generically infinite; this happens both in the commensurate case as well if the ratio is a Liouville number. On the other hand, if it is of a Diophantine type, badly approximable by rationals in the sense that the sequence of its continued-fraction expansion coefficients is bounded, the spectrum has for appropriate values of $\ga$ a finite number of open gaps or no gaps at all \cite{Ex96,ET17}, cf. Figs.~\ref{threefigsRectGolden} and \ref{threefigsRectGolden3}.
    \end{remark}

	The non-positive part of the spectrum can be determined by replacing the momentum variable $k$ by $i\kappa$ with $\kappa>0$.
	Omitting the details, we present only the final conclusion. For $\gamma<0$, the equation
	$$
	{\ga\over 2\kappa}=-\tanh\left(\frac {\kappa a}2\right)-\tanh\left(\frac {\kappa b}2\right),
	$$
	has a unique negative root  $\kappa_1$,
	and for $\gamma<\gamma_*$ the equation
	$$
	{\ga\over 2\kappa}=-\coth\left(\frac {\kappa a}2\right)-\coth\left(\frac {\kappa b}2\right)
	$$
	possesses a unique negative root $\kappa_2$.

	\begin{proposition}\label{prop:spec:negative}
		For $\gamma\ge 0$ the non-positive part of the spectrum of $\H_\ga$ is empty.
		For $\gamma<0$ one has
		$$
		\sigma(\H_\ga)\cap (-\infty,0]=[E_1,E_2],
		$$
		where
		$$
		E_1=-\kappa_1^2\,,\quad
		E_2=
		\begin{cases}
			-\kappa_2^2\,,&\ga<\ga_*,\\
			0,&\ga_*\le \ga<0.
		\end{cases}
		$$
	\end{proposition}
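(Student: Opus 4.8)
The plan is to obtain the negative part of $\sigma(\H_\ga)$ from the same Floquet determinant as in the positive case, turn the membership question into a one-parameter inequality, and then use a monotonicity trick to make the two threshold equations transparent. Since the spectral condition \eqref{spectral:condition} is an entire function of $k$, I would pass to a negative energy $E=-\kappa^2$, $\kappa>0$, by the analytic continuation $k\mapsto i\kappa$ (equivalently, by repeating the Floquet computation with the exponential Ansatz $\psi_j(x)=\alpha_j^+\e^{\kappa x}+\alpha_j^-\e^{-\kappa x}$). Using $\cos(i\kappa c)=\cosh\kappa c$, $\sin(i\kappa c)=i\sinh\kappa c$ and dividing by the nonzero factor $\sinh\kappa a\,\sinh\kappa b$, condition \eqref{spectral:condition} becomes
\[
G(\tau_1,\tau_2,\kappa)\ceq\frac{\tau_1-\cosh\kappa a}{\sinh\kappa a}+\frac{\tau_2-\cosh\kappa b}{\sinh\kappa b}=\frac{\ga}{2\kappa}.
\]
Because $\sinh\kappa c\ne0$ for $\kappa>0$, there is no analogue of the set $\Xi$ and hence no flat bands below zero. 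For fixed $\kappa$ the map $G$ is affine and increasing in each $\tau_j$, so its image over $(\tau_1,\tau_2)\in[-1,1]^2$ is the interval $[G_-(\kappa),G_+(\kappa)]$; evaluating at the corners $\tau_1=\tau_2=\pm1$ and using the half-angle identities $\tfrac{1-\cosh x}{\sinh x}=-\tanh\tfrac x2$ and $\tfrac{-1-\cosh x}{\sinh x}=-\coth\tfrac x2$ gives $G_+(\kappa)=-\tanh\tfrac{\kappa a}2-\tanh\tfrac{\kappa b}2$ and $G_-(\kappa)=-\coth\tfrac{\kappa a}2-\coth\tfrac{\kappa b}2$. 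Thus $-\kappa^2\in\sigma(\H_\ga)$ if and only if $G_-(\kappa)\le\frac{\ga}{2\kappa}\le G_+(\kappa)$.

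For $\ga\ge0$ one has $\frac{\ga}{2\kappa}\ge0>G_+(\kappa)$ for every $\kappa>0$, so the right inequality never holds and there is no spectrum in $(-\infty,0)$; the endpoint $E=0$ is not governed by the hyperbolic condition and is settled by the positive-energy description in Proposition~\ref{prop:spec:positive}(v).

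The heart of the argument is the case $\ga<0$. Here I would multiply the two inequalities by $\kappa>0$ and study the auxiliary functions $S(\kappa)\ceq\kappa G_+(\kappa)$ and $R(\kappa)\ceq\kappa G_-(\kappa)$, so that the conditions read $\frac\ga2\le S(\kappa)$ and $\frac\ga2\ge R(\kappa)$. The key observation is that $\kappa\mapsto\kappa\tanh(c\kappa)$ and $\kappa\mapsto\kappa\coth(c\kappa)$ are strictly increasing on $(0,\infty)$ for $c>0$: for $\tanh$ this is immediate, and for $\coth$ it follows from $\frac{d}{d\kappa}\bigl(\kappa\coth(c\kappa)\bigr)=\frac{\tfrac12\sinh(2c\kappa)-c\kappa}{\sinh^2(c\kappa)}>0$, using $\sinh(2c\kappa)>2c\kappa$. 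Hence $S$ and $R$ are strictly decreasing, with $S(0^+)=0$, $S(\infty)=-\infty$ and $R(0^+)=-(2a^{-1}+2b^{-1})=\tfrac{\ga_*}2$, $R(\infty)=-\infty$. Consequently $\frac\ga2\le S(\kappa)\iff\kappa\le\kappa_1$, where $\kappa_1$ is the unique root of $S=\tfrac\ga2$ (which exists precisely because $\tfrac\ga2<0$), giving $E\ge-\kappa_1^2=E_1$. Likewise, if $\ga<\ga_*$ then $\tfrac\ga2<\tfrac{\ga_*}2=\sup R$, so $R=\tfrac\ga2$ has a unique root $\kappa_2$ and $\frac\ga2\ge R(\kappa)\iff\kappa\ge\kappa_2$; whereas if $\ga_*\le\ga<0$ then $\tfrac\ga2\ge\tfrac{\ga_*}2>R(\kappa)$ for all $\kappa$, so the lower inequality is automatic. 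Assembling the two conditions (and noting $\kappa_2<\kappa_1$, which follows from $R<S$) yields $\kappa_2\le\kappa\le\kappa_1$ in the first case and $0<\kappa\le\kappa_1$ in the second, i.e. $\sigma(\H_\ga)\cap(-\infty,0]$ equals $[-\kappa_1^2,-\kappa_2^2]$ respectively $[-\kappa_1^2,0]$, as claimed.

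I expect the main obstacle to be the uniqueness of the upper threshold $\kappa_2$: the difference $\frac{\ga}{2\kappa}-G_-(\kappa)$ is a sum of one increasing and two decreasing terms, so it is not monotone and a direct sign analysis fails. Multiplying by $\kappa$ is exactly what removes this difficulty, recasting everything in terms of the manifestly monotone functions $\kappa\tanh(c\kappa)$ and $\kappa\coth(c\kappa)$; proving their monotonicity is the one genuinely computational step. The only remaining care concerns the endpoint $E=0$, where the hyperbolic condition degenerates as $\kappa\to0^+$ and whose status must instead be read off from Proposition~\ref{prop:spec:positive}.
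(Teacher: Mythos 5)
Your proof is correct and takes exactly the route the paper sketches but omits (``omitting the details, we present only the final conclusion''): the substitution $k\mapsto i\kappa$ turns \eqref{spectral:condition:symmetric} into $G_-(\kappa)\le\frac{\ga}{2\kappa}\le G_+(\kappa)$, and by the half-angle identities your $G_\pm$ are precisely the right-hand sides of the two threshold equations whose unique solvability the paper merely asserts. Your one genuine addition---recasting both inequalities via the strictly decreasing functions $S(\kappa)=\kappa G_+(\kappa)$ and $R(\kappa)=\kappa G_-(\kappa)$, with $S(0^+)=0$ and $R(0^+)=\ga_*/2$---is what actually proves uniqueness of $\kappa_1,\kappa_2$, the ordering $\kappa_2<\kappa_1$, and the claimed interval structure; the only caveat, a defect of the statement rather than of your argument, is the endpoint case $\ga=0$, where your (correct) deferral of $E=0$ to Proposition~\ref{prop:spec:positive}(v) in fact yields $0\in\sigma(\H_\ga)$ by closedness of the spectrum, in agreement with the paper's subsequent Corollary but contradicting the asserted emptiness of the non-positive spectrum for all $\ga\ge 0$ rather than only for $\ga>0$.
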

	
	From Propositions~\ref{prop:spec:positive}--\ref{prop:spec:negative}
	we get the following corollary.
	
	\begin{corollary}
		Zero belongs to the spectrum of $\H_\ga$ iff $\gamma\in [\gamma_*,0]$.
		Furthermore, if $\ga\in (\gamma_*,0)$, it is an internal point of $\sigma(\H_\ga)$, while if $\gamma=\gamma_*$ there exists a $\delta>0$ such that
		$$
		(-\delta,0)\subset \sigma(\H_\ga),\quad
		(0,\delta)\subset \R\setminus\sigma(\H_\ga).
		$$
		
	\end{corollary}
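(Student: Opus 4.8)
The plan is to locate zero relative to $\sigma(\H_\ga)$ by combining the description of the non-positive spectrum in Proposition~\ref{prop:spec:negative} with the behaviour of the positive spectrum near the origin recorded in Proposition~\ref{prop:spec:positive}, and then to supply the one missing ingredient---the precise two-sided structure of $\sigma(\H_\ga)$ in a punctured neighbourhood of $0$---by an asymptotic analysis of $F_-$ as $k\to 0^+$.

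First I would settle the \emph{iff} claim by a case distinction. For $\ga>0$ Proposition~\ref{prop:spec:negative} gives an empty non-positive part, while item (v) of Proposition~\ref{prop:spec:positive} yields $\inf(\sigma(\H_\ga)\cap(0,\infty))>0$; hence $0\notin\sigma(\H_\ga)$. For $\ga<\ga_*$ the same item again gives a strictly positive infimum, whereas Proposition~\ref{prop:spec:negative} places the non-positive part in $[E_1,-\kappa_2^2]$ with $-\kappa_2^2<0$, so once more $0\notin\sigma(\H_\ga)$. For $\ga\in[\ga_*,0)$ Proposition~\ref{prop:spec:negative} gives $E_2=0$, so $0$ is the right endpoint of $[E_1,0]\subset\sigma(\H_\ga)$, and for $\ga=0$ item (v) gives $\inf(\sigma(\H_\ga)\cap(0,\infty))=0$, so $0$ belongs to the closed set $\sigma(\H_\ga)$ as an accumulation point. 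Collecting the cases where zero is captured, one gets exactly $\ga\in[\ga_*,0]$.

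It then remains to examine the structure at $0$ when $\ga\in(\ga_*,0]$. By Proposition~\ref{prop:spec:positive}(i) the positive spectrum for $\ga<0$ is $\{E>0:E^{1/2}\in\Sigma_-\cup\Xi\}$, and since $\min\Xi=\min(\pi/a,\pi/b)>0$, the spectrum on a small punctured right neighbourhood of $0$ is governed solely by the defining condition $\ga/(2k)\ge F_-(k)$ of $\Sigma_-$. Here I would insert the fact that the integer-part terms in $F_-$ vanish for small $k$, so that $F_-(k)=-\cot(ka/2)-\cot(kb/2)$ there, together with $\cot x=\tfrac1x-\tfrac x3+O(x^3)$, giving
\begin{equation*}
F_-(k)=-\frac{2}{k}\Bigl(\frac1a+\frac1b\Bigr)+\frac{a+b}{6}\,k+O(k^3),\qquad\text{hence}\qquad \frac{\ga}{2k}-F_-(k)=\frac{\ga-\ga_*}{2k}-\frac{a+b}{6}\,k+O(k^3)
\end{equation*}
as $k\to 0^+$. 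For $\ga\in(\ga_*,0)$ the first term is positive and diverges, dominating the remainder, so $(0,\delta)\subset\Sigma_-$ for some $\delta>0$; combined with $[E_1,0]\subset\sigma(\H_\ga)$ and $E_1<0$ this gives $(-\delta',\delta')\subset\sigma(\H_\ga)$ for suitable $\delta'>0$, i.e.\ $0$ is an internal point. For $\ga=\ga_*$ the leading terms cancel and $\ga_*/(2k)-F_-(k)=-\tfrac{a+b}{6}\,k+O(k^3)<0$ for small $k>0$, so no small positive $k$ lies in $\Sigma_-$; thus $(0,\delta)\subset\R\setminus\sigma(\H_\ga)$, while $(-\delta,0)\subset\sigma(\H_\ga)$ again follows from $[E_1,0]\subset\sigma(\H_\ga)$.

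The only genuinely delicate point is $\ga=\ga_*$, where the leading $1/k$ singularities of $\ga/(2k)$ and $F_-(k)$ coincide and the outcome is decided by the sign of the linear-in-$k$ correction; there I must carry the expansion of $F_-$ to the right order and verify that the floor terms indeed vanish on the relevant interval. Everything else reduces to quoting Propositions~\ref{prop:spec:negative} and~\ref{prop:spec:positive}.
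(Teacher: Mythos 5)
Your proposal is correct, and its skeleton is the same deduction the paper has in mind: the paper gives no written argument beyond asserting that the corollary follows from Propositions~\ref{prop:spec:positive} and~\ref{prop:spec:negative}, and your case analysis for the membership claim (using Proposition~\ref{prop:spec:negative} for the sign of $E_2$ and item (v) of Proposition~\ref{prop:spec:positive} for the positive threshold) is exactly that deduction. Where you genuinely diverge is in handling the spectrum just to the right of zero. For $\gamma=\gamma_*$ your expansion is redundant: item (v) already states $\inf(\sigma(\H_\ga)\cap(0,\infty))>0$ for $\ga\le\gamma_*$, which gives $(0,\delta)\subset\R\setminus\sigma(\H_\ga)$ immediately, so you re-prove a quoted fact. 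For $\ga\in(\gamma_*,0)$, by contrast, item (v) alone only yields $\inf(\sigma(\H_\ga)\cap(0,\infty))=0$, i.e.\ accumulation of spectrum at $0$, not an interval; to get $(0,\delta)\subset\sigma(\H_\ga)$ from the propositions themselves one must additionally invoke item (iv) (the gaps are open intervals with pairwise disjoint closures accumulating only at infinity, hence no gap can have $0$ as a limit of left endpoints, and a gap of the form $(0,c)$ would contradict $\inf=0$). Your explicit small-$k$ expansion of $F_-$ replaces that structural argument by a direct computation, and the computation is sound: the floor terms vanish for $k<\min(\pi/a,\pi/b)$, $\cot x=\tfrac1x-\tfrac x3+O(x^3)$ gives $F_-(k)=\frac{\gamma_*}{2k}+\frac{(a+b)k}{6}+O(k^3)$, and the inequality defining $\Sigma_-$ (namely $\ga/(2k)\ge F_-(k)$) is then settled by the sign of $(\ga-\gamma_*)/(2k)$. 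The trade-off is clear: your route is self-contained and quantitative (it in effect re-derives the relevant parts of items (iv)--(v) near $k=0$), while the paper's intended route is shorter but leans entirely on the stated properties of the unperturbed spectrum.
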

	
	\begin{remark} \label{rem:periodic}
		For further consideration it is convenient to rewrite \eqref{spectral:condition:symmetric}
		as follows:
% ---------------------------------------------------- %
		\begin{equation}\label{sc,per,gen}
			\tau_1+\frac{\sin ka }{\sin kb}\,\tau_2=\frac{\gamma \, \sin ka}{2 k}+\frac{ \sin k(a+b)}{\sin k b}.
		\end{equation}
% ---------------------------------------------------- %
		Then, using \eqref{sc,per,gen} and taking into account that $\lvert \tau_k \rvert \leq1$, $k=1,2$,
		one can easily show that the number $E=k^2>0$ belongs to  $\sigma(\H_{\gamma})$ if and only if $k$ satisfies
% ---------------------------------------------------- %
		\begin{equation}\label{per,band,con}
			\left\lvert  \frac{\gamma \, \sin ka}{2 k}+\frac{ \sin k(a+b)}{\sin k b} \right\rvert \leq  1+\left\lvert \frac{\sin ka }{\sin kb}\right\rvert .
		\end{equation}
% ---------------------------------------------------- %
		Similarly, a number $E=-\kappa^2<0$ belongs to $\sigma(\H_{\gamma})$ if and only if $\kappa$ satisfies
% ---------------------------------------------------- %
		\begin{equation}\label{per,neg,band,con}
			\left\lvert  \frac{\gamma \, \sinh \kappa a}{2 \kappa}+  \frac{\sinh \kappa (a+b)}{\sinh \kappa b} \right\rvert \leq  1+  \frac{\sinh \kappa a }{\sinh \kappa b}  .
		\end{equation}
% ---------------------------------------------------- %
		Furthermore, the properties \textit{(vi)}--\textit{(vii)} of Proposition~\ref{prop:spec:positive}
		can be reformulated as follows:
		if $(E-\delta,E)\subset\R\setminus \sigma(\H_\ga)$ (respectively, $(E,E+\delta)\subset\R\setminus \sigma(\H_\ga)$) with $E \in \sigma(\H_\ga)\cap (0,\infty)$ and $\delta>0$,
		then $k=E^{1/2}$ satisfies
% ---------------------------------------------------- %
		\begin{gather*}
                \begin{cases}
				 	k \in \Xi&\text{if }\;\ga<0
				\\
				k \notin \Xi\text{\quad and\quad }\left\lvert  \frac{\gamma \, \sin ka}{2 k}+\frac{ \sin k(a+b)}{\sin k b} \right\rvert =  1+\left\lvert \frac{\sin ka }{\sin kb}\right\rvert
				&\text{if }\;\ga>0
                \end{cases}
				\\
				 \left(\text{respectively, }
				\begin{cases}
				 	k \in \Xi&\text{if }\;\ga>0
				\\
				k \notin \Xi\text{\quad and\quad }\left\lvert  \frac{\gamma \, \sin ka}{2 k}+\frac{ \sin k(a+b)}{\sin k b} \right\rvert =  1+\left\lvert \frac{\sin ka }{\sin kb}\right\rvert
				&\text{if }\;\ga<0
                \end{cases}\ \right).
		\end{gather*}
% ---------------------------------------------------- %
In other words, if $\ga>0$ (respectively, $\ga<0$), each upper (respectively, lower) band edge is a square of some $k=\frac{n\pi}{a}$ or $\frac{n\pi}{b}$ with $n\in\mathbb{N}$ while the lower (respectively, upper) band edge is characterized by the condition $	\left\lvert  \frac{\gamma \, \sin ka}{2 k}+\frac{ \sin k(a+b)}{\sin k b} \right\rvert =  1+\left\lvert \frac{\sin ka }{\sin kb}\right\rvert$; for more details see \cite[claim (a2) in Sec.~III.A]{EG96}.
	\end{remark}

	The band-gap pattern of the spectrum of $\H_\ga$ is illustrated in Fig.~\ref{periodicPlots}.
	
		\begin{figure}[h]
		\centering
		\subfloat[\centering $b=4$ and $\gamma=-6$\label{perPlota}]{{\includegraphics[scale=0.45]{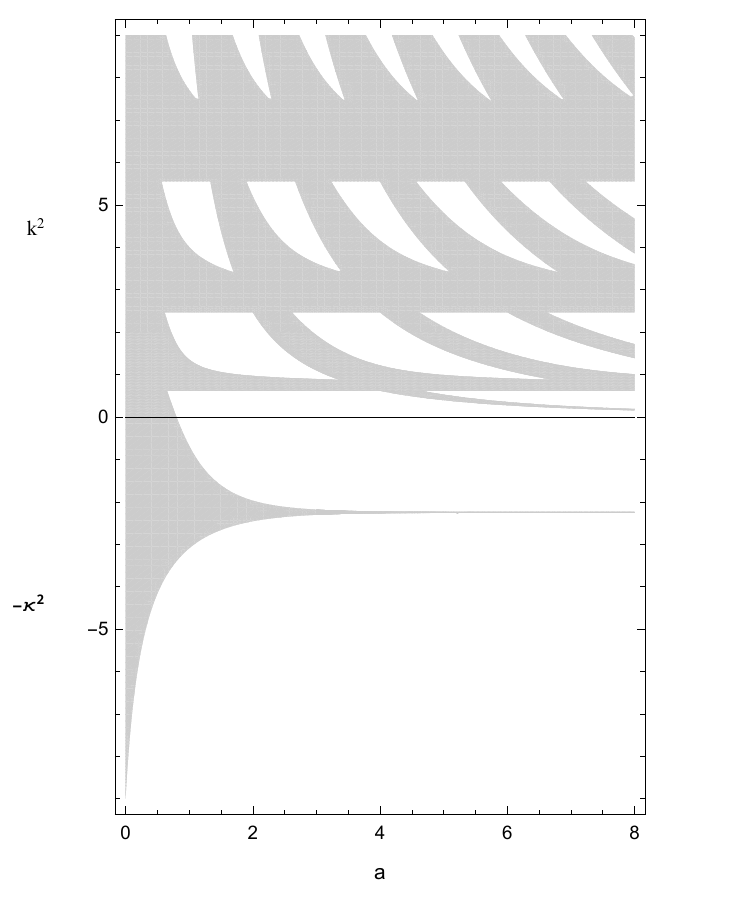} }
			\label{per1a}}
		\qquad
		\subfloat[\centering $a=3$ and $b=2$\label{perPlotb}]{\includegraphics[scale=0.45]{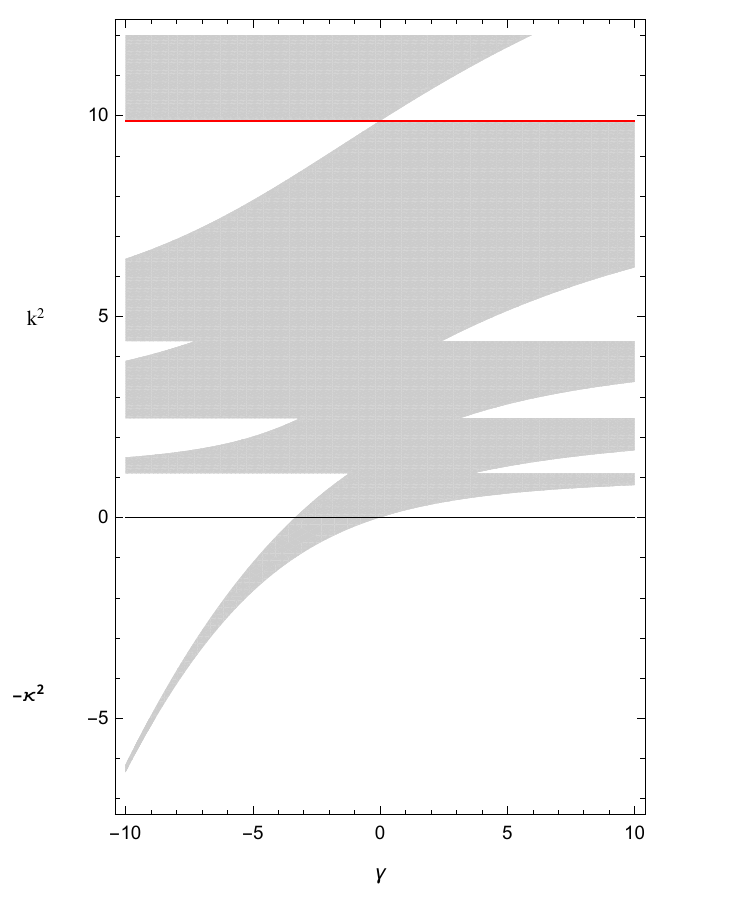} \label{pergamma}}
		\caption{Spectrum of $\H_\ga$ as a function of the edge length $a$ and the coupling constant $\gamma$. The gray shaded area represents the spectral bands. The red straight line corresponds to the flat band $(\frac{n\pi}{a})^2=(\frac{m\pi}{b})^2$ with $n=3$ and $m=2$.} \label{periodicPlots}
	\end{figure}

%%%%%%%%%%%%%%%%%%%%%%%%%%%%%%%%%%%%%%%%%%%%%%%

	\section{The perturbed system}
	\label{perturbed,section}
	
	We perturb the Hamiltonian $\H_\ga$ on an infinite straight chain of vertices as in Fig.~\ref{UnitCell-3}. In these vertices, we replace the $\delta$-coupling strength $\gamma$ by $\wt\gamma$.
	We denote the resulting Hamiltonian by $\H\gaga$.
	This operator is no longer $\Z^2$-periodic, however it remains to be periodic in the direction parallel to the perturbed chain of vertices, and we can utilize the Floquet-Bloch decomposition along this direction.
	The corresponding elementary cell (we denote it $\Gamma_0$)
	has the form of a periodic infinite double sided comb -- see Figure~\ref{UnitCell-transfer}.
	The analysis of $\sigma(\H\gaga)$ is now reduced to the spectral analysis
	of the operators $\H\gaga^{\theta_2}$ with $\theta_2\in [-\pi,\pi]$ acting in $L^2(\Gamma_0)$. This operator acts
	again as $-{\d^2\over \d x^2}$ on the edges being subject to the $\delta$-con\-ditions
	at the internal vertices and $\theta_2$-quasiperiodic conditions at the boundary vertices.
	The interaction strength at one internal vertex equals $\wt\gamma$,
	at the remaining internal vertices it is equal to $\gamma$.
	The Floquet-Bloch theory yields
% ---------------------------------------------------- %
	\begin{equation}\label{spectgaga}
			\sigma(\H\gaga)=\overline{\bigcup_{\theta_2\in [-\pi,\pi]}\,\sigma(\H\gaga^{\theta_2})}.
	\end{equation}
	
	\begin{figure}[h]
		\centering
		\includegraphics[scale=0.45]{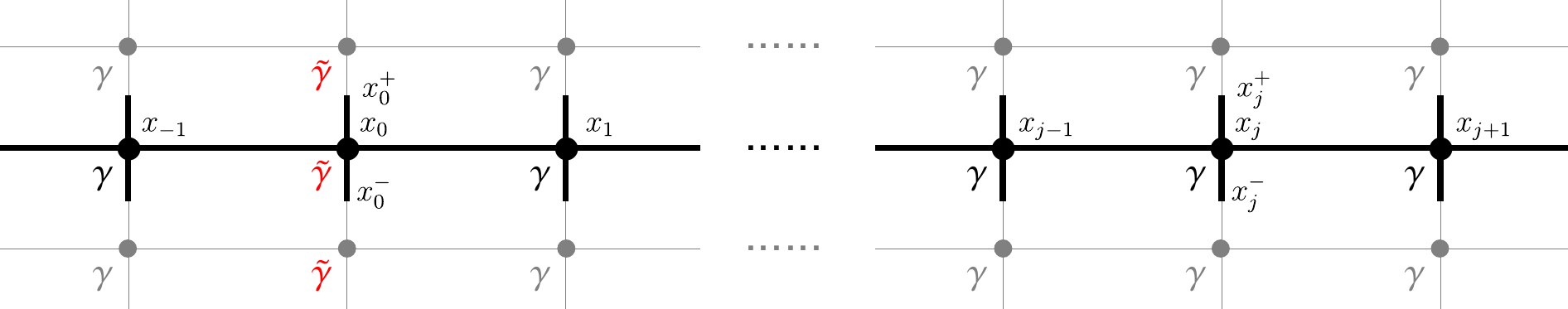}
		\caption{The infinite graph $\Gamma_0$ is specified by thick black lines.}
		\label{UnitCell-transfer}
	\end{figure}
	
	Along with $\H\gaga^{\theta_2}$ we also introduce the operator
	$\H_\ga^{\theta_2}$ putting
	$\H_\ga^{\theta_2}\ceq\H_{\ga,\ga}^{\theta_2}$
	so that \emph{all} the internal vertices have the interaction strength $\gamma$.
	Similarly to $\H_\ga $, the spectrum of $\H_\ga^{\theta_2}$ is purely essential and has band-gap structure. One has
% ---------------------------------------------------- %
	\begin{equation}\label{pertSpecUni}
	\sigma(\H_\ga)=
		\overline{\bigcup_{\theta_2\in [-\pi,\pi]}\,\sigma(\H_\ga^{\theta_2})}.
	\end{equation}
% ---------------------------------------------------- %
	In contrast to  $\H\gaga^{\theta_2}$ with $\ga\not=\wt\ga$,
	the operator $\H_\ga^{\theta_2}$ is periodic along the comb, and one can express its
	spectrum with help of the Floquet-Bloch decomposition, namely
% ---------------------------------------------------- %
	\begin{gather}
	\sigma(\H_\ga^{\theta_2})=\cupl_{\theta_1\in [-\pi,\pi]}\sigma(\H_\ga^{\theta_1,\theta_2}).
	\end{gather}
% ---------------------------------------------------- %
	Using the observation made in Remark~\ref{rem:periodic} we arrive at the following claim:

	\begin{proposition}\label{prop:per:comb}
		The number $E>0$ (respectively, $E<0$) belongs to the spectrum of $\H_\ga^{\theta_2}$ iff
		\begin{gather}
			|f_\ga^\tau(k)|\leq 1,\text{ where }k=E^{1/2},\ \tau=\cos\theta_2\\
			(\text{respectively, }|{ \wh{f}_\ga^\tau(\kappa)}|\leq 1,\text{ where }\kappa=|E|^{1/2},\ \tau=\cos\theta_2),
		\end{gather}	
		where 		
		the functions $f_\ga^\tau:(0,\infty)\setminus\Xi\to\R$ and $ { \wh f_\gamma^{\tau}:(0,\infty)\to\R}$ are given by
		\begin{gather}
			\label{sc,alter,adiffb}
			f_\gamma^{\tau}(k):=\frac{\gamma \, \sin ka}{2 k}+ \frac{ \sin k(a+b)}{\sin k b}  -\tau   \,\frac{\sin ka }{\sin kb}\,\\
			{ \wh f_\gamma^{\tau}(\kappa)}:=\frac{\gamma \, \sinh \kappa a}{2\kappa}+ \frac{ \sinh \kappa(a+b)}{\sinh \kappa b}  -\tau   \,\frac{\sinh \kappa a }{\sin \kappa b}\, .
		\end{gather}			
	\end{proposition}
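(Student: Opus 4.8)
The plan is to read off the fiber spectrum of $\H_\ga^{\theta_2}$ from the already-available periodic spectral condition and then carry out the remaining Floquet--Bloch union in the quasimomentum $\theta_1$. Fix $\theta_2\in[-\pi,\pi]$ and put $\tau=\cos\theta_2$. Since $\H_\ga^{\theta_1,\theta_2}$ is nothing but the fully periodic fiber operator of Section~\ref{section:2}, for $E=k^2>0$ with $k\notin\Xi$ the inclusion $E\in\sigma(\H_\ga^{\theta_1,\theta_2})$ is equivalent to the relation \eqref{sc,per,gen} of Remark~\ref{rem:periodic} with $\tau_2=\tau$. Off $\Xi$ both $\sin ka$ and $\sin kb$ are nonzero, so \eqref{sc,per,gen} may be solved for $\tau_1$, and the result is precisely $\tau_1=f_\gamma^\tau(k)$. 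Hence $E\in\sigma(\H_\ga^{\theta_1,\theta_2})$ iff $\cos\theta_1=f_\gamma^\tau(k)$.

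I would then invoke the decomposition $\sigma(\H_\ga^{\theta_2})=\bigcup_{\theta_1\in[-\pi,\pi]}\sigma(\H_\ga^{\theta_1,\theta_2})$ recorded above. As $\theta_1$ sweeps $[-\pi,\pi]$ its cosine sweeps the whole interval $[-1,1]$, so the equation $\cos\theta_1=f_\gamma^\tau(k)$ is solvable exactly when $f_\gamma^\tau(k)\in[-1,1]$, that is, $|f_\gamma^\tau(k)|\le 1$. This proves the positive-energy claim for all $k\notin\Xi$; the momenta in $\Xi$, on which $f_\gamma^\tau$ is not defined, are precisely the loop/flat-band values already accounted for by Proposition~\ref{prop:spec:positive} and hence fall outside the present statement.

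For the negative part $E=-\kappa^2<0$ I would follow the same route after the substitution $k\mapsto i\kappa$, $\kappa>0$, used for the periodic operator in Section~\ref{section:2}. Using $\sin(i\kappa c)=i\sinh\kappa c$ and $\cos(i\kappa c)=\cosh\kappa c$ for $c\in\{a,b,a+b\}$, the relation \eqref{sc,per,gen} turns into its hyperbolic analogue; the factors $i$ cancel throughout, and isolating $\tau_1$ (again with $\tau_2=\tau$) gives $\tau_1=\wh f_\gamma^\tau(\kappa)$. Because $\sinh\kappa b>0$ for every $\kappa>0$, no exceptional set appears and $\wh f_\gamma^\tau$ is defined on all of $(0,\infty)$; repeating the union-over-$\theta_1$ argument yields $E\in\sigma(\H_\ga^{\theta_2})$ iff $|\wh f_\gamma^\tau(\kappa)|\le 1$.

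I do not anticipate a substantial obstacle: the whole content is the linear rearrangement of \eqref{sc,per,gen} isolating $\tau_1$ together with the elementary fact that $\cos\theta_1$ covers $[-1,1]$. The main points deserving care are the legitimacy of that division, which rests on $\sin ka,\sin kb\ne 0$ off $\Xi$, and the claim that the union over the compact parameter $\theta_1$ needs no extra closure; the latter follows from continuity of the band functions in $\theta_1$, as is standard in Floquet--Bloch theory.
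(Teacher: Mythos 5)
Your proposal is correct and is essentially the argument the paper itself intends: the paper states this proposition without a separate proof, deriving it directly from the Floquet--Bloch decomposition $\sigma(\H_\ga^{\theta_2})=\bigcup_{\theta_1\in[-\pi,\pi]}\sigma(\H_\ga^{\theta_1,\theta_2})$ combined with Remark~\ref{rem:periodic}, i.e.\ solving \eqref{sc,per,gen} for $\tau_1$ with $\tau_2=\tau$ fixed and using that $\cos\theta_1$ sweeps $[-1,1]$, exactly as you do, with the same substitution $k\mapsto i\kappa$ for the negative part. Your explicit remarks on the legitimacy of dividing off $\Xi$ and on the absence of an extra closure in the $\theta_1$-union simply make precise what the paper leaves implicit.
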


    The coupling constant change in $\H\gaga^{\theta_2}$ amounts to a rank-one perturbation of the resolvent, hence the essential spectra of $\H_\ga^{\theta_2}$ and $\H\gaga^{\theta_2}$ coincide. Moreover, in each gap of the $\H_\ga^{\theta_2}$ spectrum, including the one below the spectral threshold, there is \emph{at most one eigenvalue} of $\H\gaga^{\theta_2}\,$ \cite[Sect.~8.3, Cor.~1]{We80}. Our goal is to investigate these eigenvalues, and to clarify, in particular, whether they belong to the gaps of the initial $\Z^2$-periodic operator $\H_\ga$.

	To proceed, we label the vertices of $\Gamma_0$ as follows:
	by $x_j$, $j\in\Z$, we indicate the internal vertices in such a way that
	$x_j$  and $x_{j+1}$ are linked by an edge, and the vertex $x_0$
	supports the interaction of the modified strength $\wt\ga$. By $x_j^\pm$, $j\in\Z$,
	we denote the vertices at the elementary cell boundary, see Figure~\ref{UnitCell-transfer}.
	
	The theorem below characterizes the possible eigenvalues lying in the gaps
	of $\sigma(\H_\ga^{\theta_2})$. Recall that by Proposition~\ref{prop:per:comb} $E\in (0,\infty)\setminus \sigma(\H_\ga^{\theta_2})$
	means that $|f_\gamma^{\tau}(E^{1/2})|>1$.
	
	\begin{theorem}\label{thm:per:comb}
		Let $E\in (0,\infty)\setminus \sigma(\H_\ga^{\theta_2})$.
		We set $k=E^{1/2}$, $\tau=\cos\theta_2$.
		Then $E\in\sigma_{\rm disc}(\H\gaga^{\theta_2})$ iff
% ---------------------------------------------------- %
		\begin{equation}\label{sol,gen}
			f_{\widetilde{\gamma}}^\tau(k)=
			\begin{cases}
				f_\gamma^\tau(k)+\sqrt{(f_\gamma^\tau(k))^2-1}, &\text{provided }\;  f_\gamma^\tau(k)<-1 ,
				\\[2pt]
				f_\gamma^\tau(k)-\sqrt{(f_\gamma^\tau(k))^2-1}, &\text{provided }\;   f_\gamma^\tau(k)>1.
			\end{cases}
		\end{equation}
	\end{theorem}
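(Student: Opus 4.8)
The plan is to reduce the eigenvalue problem for $\H\gaga^{\theta_2}$ to a scalar three-term recurrence for the vertex amplitudes $v_j\ceq\psi(x_j)$, $j\in\Z$, and then to select the $\L(\Gamma_0)$ solution explicitly. First I would write the wavefunction on every edge of $\Gamma_0$ through an Ansatz of the type \eqref{PerAnsatz}, eliminating on each tooth its far coefficient by means of the $\theta_2$-quasiperiodic condition. Imposing continuity and the $\delta$-condition of strength $g$ at an internal vertex $x_j$ and eliminating the edge coefficients, one is left with a constant-coefficient recurrence whose normalized form reads $v_{j+1}+v_{j-1}=2f_g^\tau(k)\,v_j$, where $f_g^\tau$ is exactly the function \eqref{sc,alter,adiffb} with $\gamma$ replaced by $g$. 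The crucial structural point, which I would check from the explicit computation, is that the coupling strength enters only the diagonal term, since it modifies merely the derivative jump at the vertex; thus at $x_j$ with $j\neq0$ the coefficient is $f_\gamma^\tau(k)$, while at $x_0$ it is $f_{\wt\gamma}^\tau(k)$. Consistency with Proposition~\ref{prop:per:comb}, whose band condition $|f_\gamma^\tau|\le1$ identifies $f_\gamma^\tau$ as the half-trace of the monodromy matrix, fixes the normalization. Since $k\notin\Xi$ here, an edge carrying Dirichlet data at both endpoints vanishes identically, so a nontrivial eigenfunction cannot be supported away from the vertices and is faithfully encoded by $(v_j)_{j\in\Z}$.

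Next I would solve the recurrence in the gap. Because $|f_\gamma^\tau(k)|>1$, the characteristic equation $\mu+\mu^{-1}=2f_\gamma^\tau(k)$ has two real reciprocal roots $\mu_\pm$ with $\mu_+\mu_-=1$ and $|\mu_-|<1<|\mu_+|$; explicitly $\mu_-=f_\gamma^\tau(k)-\sqrt{(f_\gamma^\tau(k))^2-1}$ when $f_\gamma^\tau(k)>1$ and $\mu_-=f_\gamma^\tau(k)+\sqrt{(f_\gamma^\tau(k))^2-1}$ when $f_\gamma^\tau(k)<-1$, the root of modulus less than $1$ in either case. Membership in $\L(\Gamma_0)$ forces the amplitudes to decay as $j\to\pm\infty$, which on each half-spine selects the decaying solution $v_j=v_1\mu_-^{\,j-1}$ for $j\ge1$ and $v_j=v_{-1}\mu_-^{\,-j-1}$ for $j\le-1$. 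Feeding these into the strength-$\gamma$ recurrences at $j=\pm1$ and using $2f_\gamma^\tau-\mu_-=\mu_+=\mu_-^{-1}$ yields $v_{1}=\mu_-v_0$ and $v_{-1}=\mu_-v_0$; in particular the profile is automatically symmetric about $x_0$, and $v_0\neq0$ since otherwise all $v_j$ vanish.

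It remains to impose the modified condition at $x_0$. The recurrence there reads $v_1+v_{-1}=2f_{\wt\gamma}^\tau(k)\,v_0$, and substituting $v_{\pm1}=\mu_-v_0$ gives, after cancelling $v_0\neq0$, the single equation $f_{\wt\gamma}^\tau(k)=\mu_-$, which is precisely \eqref{sol,gen} once $\mu_-$ is written out in the two cases $f_\gamma^\tau(k)<-1$ and $f_\gamma^\tau(k)>1$. Conversely, given \eqref{sol,gen}, the function built from $v_j=\mu_-^{|j|}$ together with the edge Ansätze is genuinely in $\L(\Gamma_0)$: each edge contributes $O(|\mu_-|^{2|j|})$ to the norm and the geometric series converges since $|\mu_-|<1$, so this produces the eigenfunction, which is simple in accordance with the at-most-one-eigenvalue bound recalled before the theorem. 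The main obstacle I anticipate is the first step, namely carrying out the edge elimination carefully enough to confirm that the reduced recurrence is the clean symmetric form $v_{j+1}+v_{j-1}=2f_g^\tau(k)\,v_j$ with the coupling appearing only diagonally, including the correct treatment of the quasiperiodic teeth and of the normalizing phase that turns the a priori complex off-diagonal coefficients into a single real modulus.
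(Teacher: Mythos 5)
Your proposal is correct and follows essentially the same route as the paper's proof: reduction of the eigenvalue problem to the three-term recurrences $\nu_{j+1}+\nu_{j-1}=2f_\gamma^\tau(k)\,\nu_j$ for $j\neq 0$ and $\nu_{1}+\nu_{-1}=2f_{\wt\gamma}^\tau(k)\,\nu_0$ at the perturbed vertex, selection of the contracting characteristic root $\mu_-$ on each half-spine by square-summability, and matching at $x_0$ to obtain $f_{\wt\gamma}^\tau(k)=\mu_-$, which is exactly \eqref{sol,gen}. The differences are only cosmetic: the paper organizes the root selection via the transfer matrix $M$ and its eigenvalues $\Lambda_\pm$ rather than the scalar characteristic equation, and it leaves the converse direction (reconstructing the $\L(\Gamma_0)$ eigenfunction from the decaying vertex sequence) implicit, which you spell out.
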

% ---------------------------------------------------- %	
	\begin{proof}
		Let $\psi\in\L(\Gamma_0)$ be the eigenfunction corresponding to $E$.
		We denote by $e_j$, $e_j^+ $, $e_j^- $ the edges of $\Gamma_0$
		connecting the vertices $x_{j-1}$,
		$x_j^+$ and $x_j^-$ with $x_j$, respectively (see Fig.~\ref{UnitCell-transfer}).
		On each edge $e_j$ we introduce the natural coordinate $x\in [x_{j-1},x_j]$, and similarly,
		on the edges $e_j^-$ and $e_j^+$ we introduce the natural coordinate
		$x\in [x_{j}^-,x_j]$ and $x\in [x_{j},x_j^+]$, respectively.
		We denote by $\psi_{j}(x)$ and $\psi_j^\pm(x)$
		the restrictions of $\psi(x)$ to $e_j$ and $e_j^\pm$, respectively.
		
		Since $\H\gaga^{\theta_2}$ acts as negative Laplacian on the edges, we have
% ---------------------------------------------------- %
		\begin{align}\label{PerAnsatz,xj}
			\begin{matrix}
				\psi_{j}(x)=\alpha_{j}\cdot \e^{ik(x-x_j)}+\beta_j\cdot\e^{-ik(x-x_j)},&x\in[x_{j-1},x_j],
				\\[3pt]
				\psi_{j}^+(x)=\alpha_j^+\cdot \e^{ik(x-x_j)}+\beta_j^+\cdot\e^{-ik(x-x_j)},& x\in[x_{j},x_j^+],
				\\[3pt]
				\psi_{j}^-(x)=\alpha_j^-\cdot \e^{ik(x-x_j)}+\beta_j^-\cdot\e^{-ik(x-x_j)},& x\in[x_j^-,x_j],
			\end{matrix}
		\end{align}
% ---------------------------------------------------- %
		with the coefficients to be determined, and similarly
		$$
		\psi_{j+1}(x)=\alpha_{j+1}\cdot \e^{ik(x-x_{j+1})}+
		\beta_{j+1}\cdot\e^{-ik(x-x_{j+1})},\quad x\in[x_{j},x_{j+1}].
		$$
		By assumption $\psi$ satisfies the $\delta$-conditions at the vertices $x_j$, which, in particular, implies
		its continuity. We denote by $\nu_j$ the value of $\psi$ at $x_j$, that is, one has
% ---------------------------------------------------- %
		\begin{align*}
			\nu_j&\ceq\psi_{j}(x_j)=\psi_{j+1}(x_j)=\psi_{j}^+(x_j)=\psi_{j}^-(x_j),
		\end{align*}
		and, furthermore, the relation
		\begin{align*}
			\gamma\nu_j=-\psi_j'(x_j)+\psi_{j+1}'(x_j)-{\psi_j^-}'(x_j)+{\psi_j^+}'(x_j).
		\end{align*}
	Note that $\psi$ is supposed to be square integrable, which requires
		\begin{equation}\label{L2}
		\sum_{j\in\mathbb{Z}}|\nu_j|^2<\infty.
	\end{equation}
		The above equalities imply the following relations between the parameters $\alpha_j$, $\al_j^\pm$,
		$\be_j$, $\be_j^\pm$\,:
% ---------------------------------------------------- %
		\begin{align}
			\nu_j&=
			\alpha_{j} +\be_{j}=
		 \alpha_{j+1}\;\e^{-ika} +\be_{j+1}\;\e^{ika}=
			\alpha_{j}^{+}+\be_{j}^{+}  =
			\alpha_{j}^{-}+\be_{j}^{-} ,
			\label{delta,xj}
			\\[2pt]
			\gamma \nu_j&=
			ik (
			-\alpha_j+\be_j+\alpha_{j+1}\;\e^{-ika}-\be_{j+1}\;\e^{ika}
			-\alpha_j^-+\be_j^{-}
			+\alpha_j^+-\be_j^+
			).
			\label{delta,der,xj}
		\end{align}
		We also have
% ---------------------------------------------------- %
		\begin{equation}\label{psi,xj+-1}
			\nu_{j-1}=\al_j\e^{-ika}+\be_j\e^{ika},\quad
			\nu_{j+1}=\alpha_{j+1}+\be_{j+1}.
		\end{equation}
% ---------------------------------------------------- %
		At the degree-one vertices of $\Gamma_0$ we have the quasi-periodic conditions
		$\psi_j^+(x_{j}^{+})=\e^{i \theta_{2} } \psi_j^-(x_{j}^{-})$ and
		$(\psi_j^+)'(x_{j}^{+})=\e^{i \theta_{2} } (\psi_j^-)'(x_{j}^{-})$,
		whence
% ---------------------------------------------------- %
		\begin{equation} \label{floq,xj}
			\begin{split}
				\e^{ikb/2}\alpha_j^{+}+\e^{-ikb/2}\be_j^+ =
				\e^{i \theta_{2} }\left(\e^{-ikb/2}\alpha_{j}^{-}+\e^{ikb/2}\be_{j}^{-}\right)    ,  \\[2pt]
				\e^{ikb/2}\alpha_j^{+}-\e^{-ikb/2}\be_j^+ =
				\e^{i \theta_{2} }\left(\e^{-ikb/2}\alpha_{j}^{-}-\e^{ikb/2}\be_{j}^{-}\right)   .
			\end{split}
		\end{equation}
% ---------------------------------------------------- %
		It follows from \eqref{delta,xj} that
		$\beta_{j+1}\e^{ika}-\beta_{j}=-\al_{j+1}\e^{-ika}+\al_j$,
		$\beta_{j}^+-\beta_{j}^-=-\al_{j}^++\al_j^-$. Substituting these
		equalities into \eqref{delta,der,xj}, we get
% ---------------------------------------------------- %
		\begin{equation}\label{delta,der,xj,sub}
			\gamma\nu_j=2ik\left( -\alpha_{j}+\alpha_{j+1}\e^{-ika}-\alpha_{j}^- +\alpha_{j}^+  \right).
		\end{equation}
% ---------------------------------------------------- %
		In a similar way, expressing $\alpha_{i}^{-}$ in terms of $\alpha_{i}^{+}$ and $\psi_{j}$ from \eqref{delta,xj}, and substituting them into \eqref{psi,xj+-1} and \eqref{floq,xj}, we arrive at
% ---------------------------------------------------- %
		\begin{equation}\label{alpha13+}
			\alpha_j=\frac{\nu_{j-1}-\nu_{j}\,e^{i ka}}{2\sin ka}   i ,
		\end{equation}
		and
		\begin{equation}\label{alpha24+}
			\alpha_{j}^{+}=\frac{ e^{i \left(kb+\theta _2\right)}-1}{ e^{2 i kb}-1} \;\nu_j,      \qquad   \alpha_{j}^{-}=\frac{e^{i kb} \left(e^{ikb}-e^{-i \theta _2}\right)}{ e^{2 i kb}-1}\;\nu_j    ,
		\end{equation}
% ---------------------------------------------------- %
		respectively. Substituting then \eqref{alpha13+} and \eqref{alpha24+} into \eqref{delta,der,xj,sub}, we obtain the following three-term recursion relation for the wave function values at the neighboring vertices,
% ---------------------------------------------------- %
		\begin{equation}\label{3terms,gen}
			\nu _{j+1}-2f_\ga^\tau(k) \nu _j+\nu _{j-1} =0 ,\quad j\not =0.
		\end{equation}
		where $\tau=\cos \theta_2 $. Similarly, for $j=0$ we get
		\begin{equation}\label{3terms,gen0}
			\nu_{1}-2f_{\wt \ga}^\tau(k) \nu _0+\nu _{-1} =0 .
		\end{equation}
% ---------------------------------------------------- %		
		Next we introduce the vector $X_j=(\nu_{j},\nu_{j-1})^T$, $j\in\Z$.
		Then, due to \eqref{3terms,gen}, we get
		$$
		X_{j+1}=M X_j,\quad j\ge 1,\qquad
		X_{j-1}=M^{-1} X_{j},\quad j\le 0,
		$$
		where the transfer matrix $M$ is given by
		$$
		M=
		\left(
		\begin{matrix}
			2f_\ga^\tau(k) & -1
			\\
			1 & 0
		\end{matrix}
		\right)
		$$
		Recall that $|f^\tau_\gamma(k)|>1$ for $E\in (0,\infty)\setminus\sigma(\H_\ga^{\theta_2})$.
		The  eigenvalues $\Lambda_{\pm}(k)$ of $M$ are given by
% ---------------------------------------------------- %
		\begin{equation}\label{,lamb,j,sol}
			\Lambda_{\pm}(k) =f^\tau_\gamma(k)\pm\sqrt{ f^\tau_\gamma(k)^2-1}  \,  ,
		\end{equation}
% ---------------------------------------------------- %
		We denote by $Y_\pm$ the corresponding eigenvectors:
		$$
		Y_\pm=
		\left(
		\begin{matrix}
			\Lambda_\pm\\1
		\end{matrix}
		\right).
		$$
		The vectors $Y_+$ and $Y_-$ form a basis in $\C^2$, thus
		we can represent the vector $X_1$ as follows,
		$$
		X_1=c_- Y_- + c_+ Y_+,\text{ where }c_\pm\in\C,\ c_-^2+c_+^2\not=0,
		$$
		whence
		$$
		X_j=c_- (\Lambda_-)^{j-1} Y_- + c_+ (\Lambda_+)^{j-1} Y_+,\quad j\ge 1.
		$$
		If $f^\tau_\ga(k)>1$ (respectively, $f^\tau_\ga(k)<-1$), then
		$0<\Lambda_-<1$ and $\Lambda_+>1$
		(respectively, $-1<\Lambda_+<0$ and $\Lambda_-<-1$). Hence, in order to fulfil \eqref{L2}, we
		require $c_+=0$ (respectively, $c_-=0$), and consequently,
		$X_j=\Lambda_-^j X_1$ (respectively, $X_j=\Lambda_+^j X_1$).
		Without loss of generality we may assume that $c_-=1$ (respectively, $c_+=1$),
		whence
% ---------------------------------------------------- %
		\begin{gather}
			\nu_j=\Lambda_-^j\quad \text{(respectively, $\nu_j=\Lambda_+^j$)},\quad j\ge 0.
		\end{gather}
		Similarly,
		\begin{gather}
			\nu_j=\Lambda_+^j\quad \text{(respectively, $\nu_j=\Lambda_-^j$)},\quad j\le 0.
		\end{gather}
		(note that $\Lambda_+\Lambda_-=1$). Finally, taking into account \eqref{3terms,gen0}, we get
		\begin{gather}
			\Lambda^+ - f_{\widetilde{\gamma}}^\tau(k)=0 \text{\quad(respectively, $\Lambda^- -  f_{\widetilde{\gamma}}^\tau(k)=0$)}
		\end{gather}
		This concludes the proof.
		\end{proof}
% ---------------------------------------------------- %
Similarly, we obtain the following statement about the perturbation-generated negative eigenvalues inside the gaps of $\sigma(\H_\ga^{\theta_2})$:
% ---------------------------------------------------- %

\begin{corollary}
    \label{cor:per:comb;neg}
	Let $E\in (-\infty,0)\setminus \sigma(\H_\ga^{\theta_2})$.
	We set $\kappa=|E|^{1/2}$, $\tau=\cos\theta_2$. Then $E\in\sigma_{\rm disc}(\H\gaga^{\theta_2})$ iff
% ---------------------------------------------------- %
	\begin{equation}\label{sol,gen,neg}
		{ \wh f_{\widetilde{\gamma}}^\tau}(\kappa)=
		\begin{cases}
			{ \wh f_\gamma^\tau}(\kappa)+\sqrt{({ \wh f_\gamma^\tau}(\kappa))^2-1}, &\text{provided }\;  { \wh f_\gamma^\tau}(\kappa)<-1 ,
			\\[2pt]
			{ \wh f_\gamma^\tau}(\kappa)-\sqrt{({ \wh f}_\gamma^\tau(\kappa))^2-1}, &\text{provided }\; { \wh f_\gamma^\tau}(\kappa)>1.
		\end{cases}
	\end{equation}
\end{corollary}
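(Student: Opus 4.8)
The plan is to run the argument of Theorem~\ref{thm:per:comb} essentially verbatim after the substitution $k\mapsto i\kappa$, $\kappa>0$, which is exactly the device already used for the unperturbed operator in Section~\ref{section:2}. The key observation is that $\wh f_\ga^\tau$ is the analytic continuation of $f_\ga^\tau$: replacing $k$ by $i\kappa$ turns $\sin ka$, $\cos ka$, $\sin k(a+b)$ into $i\sinh\kappa a$, $\cosh\kappa a$, $i\sinh\kappa(a+b)$, and a short computation gives $f_\ga^{\tau}(i\kappa)=\wh f_\ga^\tau(\kappa)$ (the factors of $i$ cancelling in each of the three terms of \eqref{sc,alter,adiffb}). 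Consequently the whole derivation of the three-term recursion in the proof above — a chain of algebraic identities in the variable $k$ — continues analytically and yields the same relations with $\wh f$ in place of $f$.

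Concretely, I would first write the restrictions of the eigenfunction $\psi$ to the edges $e_j,e_j^\pm$ as combinations of $\e^{\pm\kappa(x-x_j)}$, the negative-energy counterpart of \eqref{PerAnsatz,xj}. Imposing the $\delta$-conditions at the interior vertices $x_j$ and the $\theta_2$-quasiperiodic conditions at the boundary vertices $x_j^\pm$, exactly as in \eqref{delta,xj}--\eqref{floq,xj}, and eliminating the auxiliary coefficients, I would arrive at the recursion $\nu_{j+1}-2\wh f_\ga^\tau(\kappa)\,\nu_j+\nu_{j-1}=0$ for $j\neq0$ and $\nu_1-2\wh f_{\wt\ga}^\tau(\kappa)\,\nu_0+\nu_{-1}=0$ at the modified vertex. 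Introducing $X_j=(\nu_j,\nu_{j-1})^T$ and the transfer matrix $\wh M$ built from $\wh f_\ga^\tau(\kappa)$, its eigenvalues are $\wh\Lambda_\pm(\kappa)=\wh f_\ga^\tau(\kappa)\pm\sqrt{(\wh f_\ga^\tau(\kappa))^2-1}$ with $\wh\Lambda_+\wh\Lambda_-=1$. Selecting in each half-chain the branch forced by square integrability and then enforcing the $j=0$ relation singles out the root $\wh\Lambda_\mp$ according to the sign of $\wh f_\ga^\tau(\kappa)$, which is precisely \eqref{sol,gen,neg}.

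The step that needs genuine (if routine) checking is the square-integrability reduction. By Proposition~\ref{prop:per:comb}, $E\in(-\infty,0)\setminus\sigma(\H_\ga^{\theta_2})$ is equivalent to $|\wh f_\ga^\tau(\kappa)|>1$; since $\kappa$ is real, the hyperbolic functions entering $\wh f_\ga^\tau$ are real-valued, so $\wh f_\ga^\tau(\kappa)\in\R$ and $\wh M$ has two real eigenvalues, one inside and one outside the unit circle (their product being $1$). This is what makes the summability selection $c_\pm=0$ work exactly as in the positive-energy case. I would also verify that $\|\psi\|_{\L(\Gamma_0)}^2$ is controlled by $\sum_j|\nu_j|^2$: every edge of $\Gamma_0$ has finite length, so on each cell the $\L$-norm of $\psi$ is bounded by a constant times $|\nu_{j-1}|^2+|\nu_j|^2$, whence \eqref{L2} is again necessary and sufficient for $\psi\in\L(\Gamma_0)$. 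With these points in place the computation is identical to that of Theorem~\ref{thm:per:comb}, and I expect no additional difficulty beyond keeping track of the hyperbolic analogues.
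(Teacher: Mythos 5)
Your proposal is correct and coincides with the paper's own treatment: the paper derives Corollary~\ref{cor:per:comb;neg} by exactly the substitution $k\mapsto i\kappa$ in the argument of Theorem~\ref{thm:per:comb} (stating it with the word ``similarly''), which is what you carry out, including the hyperbolic Ansatz, the recursion with $\wh f_\ga^\tau$, and the transfer-matrix branch selection. Your additional checks (reality of $\wh\Lambda_\pm$ from $|\wh f_\ga^\tau(\kappa)|>1$ and the equivalence of $\psi\in\L(\Gamma_0)$ with $\sum_j|\nu_j|^2<\infty$) are sound and only make explicit what the paper leaves implicit.
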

% ---------------------------------------------------- %
In Figs.~\ref{Ex1,a1b3gam4gamt1} and \ref{Ex2,a2b2gam1gamt3}, the pattern of $\sigma_{\rm disc}(\H\gaga^{\theta_2})$ is illustrated for particular choices of parameters; for simplicity we plot the fiber-operator spectrum for a (dense enough) discrete array of $\theta_2$ values.

	\begin{figure}[h]
		\centering
		\includegraphics[scale=0.828]{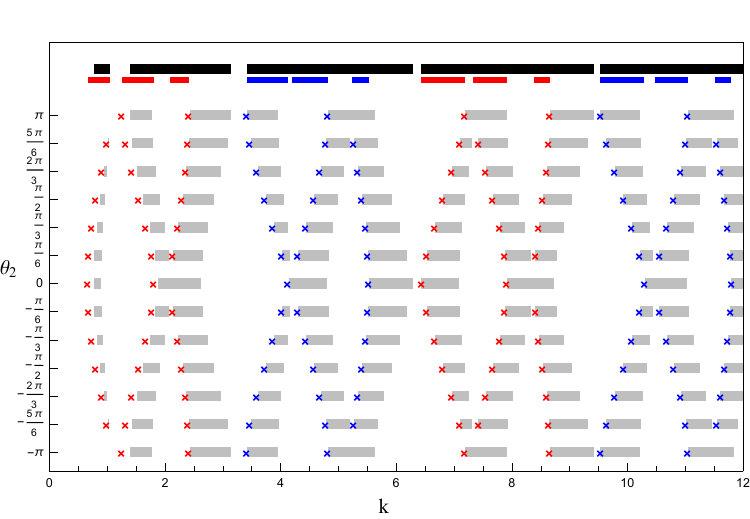}
		\caption{The spectral picture for the lattice with $a=1$, $b=3$, $\gamma=4$ and $\widetilde{\gamma}=1$. The cross marks illustrate $\sigma_{\rm disc}(\H\gaga^{\theta_2})$ for the indicated values of $\theta_2$ (the blue and red colours correspond respectively to the upper and lower conditions in \eqref{sol,gen}); the union of these discrete eigenvalues over $\theta_2\in[-\pi,\pi]$ is illustrated by stripes in blue (determined by \eqref{Pert,BandCon1}) and red (determined by \eqref{Pert,BandCon2}) on the top of the picture. The gray stripes represent $\sigma(\H_\ga^{\theta_2})$ for the same values of $\theta_2$, determined by \eqref{sc,per,gen}; the union of these bands over $\theta_2\in[-\pi,\pi]$ yields the spectral bands of the operator $\H_\ga$ which is illustrated by black stripes on the top (determined by the condition \eqref{per,band,con}). Spectral bands of the perturbed operator $\H\gaga$ will be then the union of the black, red and blue stripes on the top.}
		\label{Ex1,a1b3gam4gamt1}
	\end{figure}

	\begin{figure}[h]
		\centering
		\includegraphics[scale=0.828]{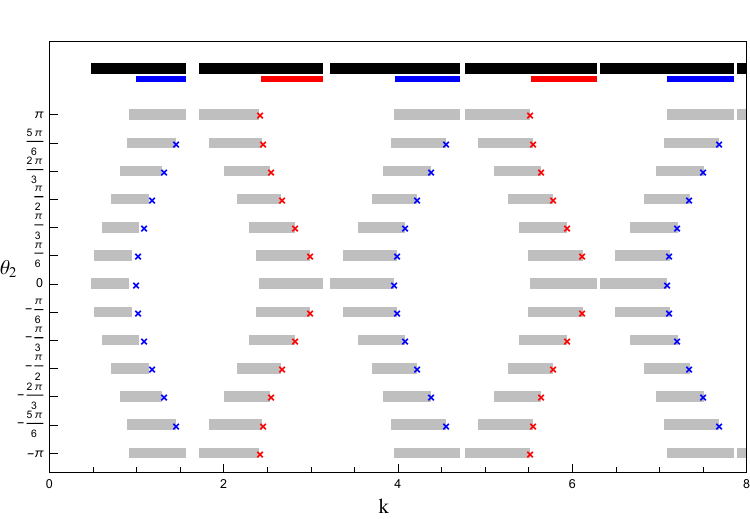}
		\caption{The same as above with the parameters $\gamma=1$ and $\widetilde{\gamma}=3$ for the square lattice, $a=b=2\,$; note that the blue and red crosses are, respectively, missing at $\pm\pi$ and zero since the new eigenvalues there merge the band edges. This example corresponds to the case~\ref{th4} of Theorem~\ref{thmGen} below in which the spectrum of $\H_\ga$ remains unchanged under the perturbation.}
		\label{Ex2,a2b2gam1gamt3}
	\end{figure}

	\newpage

         Theorem~\ref{thm:per:comb} and Corollary~\ref{cor:per:comb;neg} will be used later on to detect the new spectral bands resulting from the perturbation (see Proposion~\ref{proposition4}).
        To handle the cases when no new bands appear we make the following claim about conditions characterizing the set of new spectral bands resulting from the perturbation
        (as Figs.~\ref{Ex1,a1b3gam4gamt1} and \ref{Ex2,a2b2gam1gamt3} show, they may or may not be embedded in the original bands of $\sigma(\H_\ga)$).

	\begin{proposition}\label{cor:explicitBands}
	A positive number $E=k^2$ belongs to the set of spectral bands $$S\gaga\coloneqq\bigcup_{\theta_2\in [-\pi,\pi]}\,\sigma_{\rm disc}(\H\gaga^{\theta_2})$$ iff the number $k$ satisfies either the band condition
% ---------------------------------------------------- %
		\begin{equation}\label{Pert,BandCon1}
			  k\in \left\{ \,	\mathcal{BC}_k^+  \quad \cap \quad  \mathcal{G}(k)>0  \right\},
		\end{equation}
		or the band condition
		\begin{equation}\label{Pert,BandCon2}
			 k\in \left\{ \,	\mathcal{BC}_k^-  \quad \cap \quad  \mathcal{G}(k)<0  \right\},
		\end{equation}
% ---------------------------------------------------- %
		where $ \mathcal{G}(k):=(\widetilde{\gamma}-\gamma )\,\frac{ \sin k a}{2 k }$ and
% ---------------------------------------------------- %
		\begin{equation}\label{BCpmk}
			\mathcal{BC}_k^{\pm}:\quad \left\lvert   \frac{ \sin k(a+b)}{\sin k b}+\frac{\gamma\, \sin ka}{2 k}\pm\sqrt{\left( (\widetilde{\gamma}-\gamma )\,	\frac{ \sin ka}{2 k} \right)^2+1} \right\rvert   \leq   \left\lvert \frac{\sin ka }{\sin kb}  \right\rvert	.
		\end{equation}
% ---------------------------------------------------- %
		Similarly, by replacing $k$ by $i \kappa$ with $\kappa>0$ in Eqs. \eqref{Pert,BandCon1} and \eqref{Pert,BandCon2}, a negative number $E=-\kappa^2$ belongs to $S\gaga$ iff $\kappa$ satisfies either the band condition
% ---------------------------------------------------- %
		\begin{equation}\label{Pert,BandCon1,neg}
			 \kappa \in \left\{ \ \wh{\mathcal{BC}}_\kappa^{+}  \quad \cap \quad  \wh{\mathcal{G}}(\kappa)>0  \right\},
		\end{equation}
		or the band condition
		\begin{equation}\label{Pert,BandCon2,neg}
			 	\kappa \in \left\{ \ \wh{\mathcal{BC}}_\kappa^{-}  \quad \cap \quad \wh{\mathcal{G}}(\kappa)<0  \right\},
		\end{equation}
% ---------------------------------------------------- %
		where $ \wh{\mathcal{G}}(\kappa):=(\widetilde{\gamma}-\gamma )\,\frac{ \sinh \kappa a}{2 \kappa }$ and
% ---------------------------------------------------- %
		\begin{equation}\label{BCpmkappa}
			 \wh{\mathcal{BC}}_\kappa^{\pm}:\quad 	\left\lvert   \frac{ \sinh \kappa (a+b)}{\sinh \kappa  b}+\frac{\gamma\, \sinh \kappa a}{2 \kappa }\pm\sqrt{\left( (\widetilde{\gamma}-\gamma )\,	\frac{ \sinh \kappa a}{2 \kappa } \right)^2+1} \right\rvert   \leq    \frac{\sinh \kappa a }{\sinh \kappa b}  	.
		\end{equation}
	\end{proposition}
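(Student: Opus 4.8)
The plan is to reduce the statement to the eigenvalue criterion of Theorem~\ref{thm:per:comb} (and, for the negative part, Corollary~\ref{cor:per:comb;neg}) by a purely algebraic manipulation, and then to optimize over the quasimomentum $\theta_2$. The key observation is that, by the explicit form \eqref{sc,alter,adiffb}, the difference $f_{\wt\ga}^\tau(k)-f_\ga^\tau(k)=(\wt\ga-\ga)\frac{\sin ka}{2k}=\mathcal{G}(k)$ does \emph{not} depend on $\tau$. Hence, for fixed $\theta_2$ and $k\notin\Xi$ with $E=k^2\notin\sigma(\H_\ga^{\theta_2})$ (i.e. $|f_\ga^\tau(k)|>1$), the eigenvalue condition \eqref{sol,gen} can be recast, branch by branch, as $\mathcal{G}(k)=+\sqrt{(f_\ga^\tau(k))^2-1}$ when $f_\ga^\tau(k)<-1$, and as $\mathcal{G}(k)=-\sqrt{(f_\ga^\tau(k))^2-1}$ when $f_\ga^\tau(k)>1$. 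Thus the sign of $\mathcal{G}(k)$ alone already selects which branch can occur: the first requires $\mathcal{G}(k)\ge0$, the second $\mathcal{G}(k)\le0$.

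First I would square these relations. Writing $R(k)\ceq\sqrt{\mathcal{G}(k)^2+1}$, squaring gives $(f_\ga^\tau(k))^2=R(k)^2$, and since $R(k)>1$ precisely when $\mathcal{G}(k)\ne0$, the imposed sign constraint picks a unique root: $f_\ga^\tau(k)=+R(k)$ in the case $\mathcal{G}(k)<0$, and $f_\ga^\tau(k)=-R(k)$ in the case $\mathcal{G}(k)>0$. Conversely, each of these sign-constrained equalities returns \eqref{sol,gen}, so the reformulation is a genuine equivalence; in particular the degenerate value $\mathcal{G}(k)=0$ produces no eigenvalue, which matches the strict inequalities $\mathcal{G}(k)\gtrless0$ appearing in \eqref{Pert,BandCon1}--\eqref{Pert,BandCon2}.

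Next I would take the union over $\theta_2$. Writing $f_\ga^\tau(k)=A(k)-\tau\,s(k)$ with $A(k)\ceq\frac{\ga\sin ka}{2k}+\frac{\sin k(a+b)}{\sin kb}$ and $s(k)\ceq\frac{\sin ka}{\sin kb}$, the function $f_\ga^\tau(k)$ is affine in $\tau$ with nonzero slope $s(k)\ne0$ (guaranteed by $k\notin\Xi$). Consequently, as $\tau=\cos\theta_2$ runs through $[-1,1]$, the value $f_\ga^\tau(k)$ sweeps exactly the interval $[A(k)-|s(k)|,\,A(k)+|s(k)|]$. Therefore the equation $f_\ga^\tau(k)=\mp R(k)$ admits a solution $\tau\in[-1,1]$ if and only if $|A(k)\mp R(k)|\le|s(k)|$, which is precisely condition $\mathcal{BC}_k^{\mp}$ of \eqref{BCpmk}. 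Pairing this with the sign of $\mathcal{G}$ finishes the positive part: $\mathcal{G}(k)>0$ forces $f_\ga^\tau(k)=-R(k)$ and hence $\mathcal{BC}_k^{+}$, giving \eqref{Pert,BandCon1}, while $\mathcal{G}(k)<0$ forces $f_\ga^\tau(k)=+R(k)$ and hence $\mathcal{BC}_k^{-}$, giving \eqref{Pert,BandCon2}; the union of these over $\theta_2$ is exactly $S\gaga\cap(0,\infty)$.

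The negative-energy statement then follows verbatim under the substitution $k\mapsto i\kappa$, which turns $f_\ga^\tau,\mathcal{G}$ into $\wh f_\ga^\tau,\wh{\mathcal{G}}$ and the trigonometric functions into hyperbolic ones, invoking Corollary~\ref{cor:per:comb;neg} in place of Theorem~\ref{thm:per:comb}; the only simplification is that the analogue of the slope, $\sinh\kappa a/\sinh\kappa b$, is strictly positive for every $\kappa>0$, so no $\Xi$-type exclusion is needed and the absolute value on the right-hand side of \eqref{BCpmkappa} disappears, yielding \eqref{Pert,BandCon1,neg}--\eqref{Pert,BandCon2,neg}. The main obstacle is not any single computation but the careful bookkeeping: one must carry the two sign branches consistently through the squaring step and confirm that the sign-constrained equalities are \emph{equivalent} to \eqref{sol,gen} rather than mere consequences of it. Once that is settled, the reduction of ``existence of an admissible $\tau$'' to the interval-reachability inequality $|A(k)\mp R(k)|\le|s(k)|$ is immediate from the affine dependence of $f_\ga^\tau$ on $\tau$, and the absolute values in $\mathcal{BC}_k^{\pm}$ are exactly the trace of $|s(k)|$.
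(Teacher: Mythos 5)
Your proposal is correct and takes essentially the same route as the paper's own proof: reduce to Theorem~\ref{thm:per:comb}, use that $f_{\wt\ga}^\tau(k)-f_\ga^\tau(k)=\mathcal{G}(k)$ is independent of $\tau$, square under the sign constraint to get $f_\ga^\tau(k)=\mp R(k)$ with $R(k)\ceq\sqrt{\mathcal{G}(k)^2+1}$, and then translate existence of an admissible $\tau\in[-1,1]$ into the inequalities $\mathcal{BC}_k^{\pm}$ — in fact your explicit interval-reachability argument (the range of the affine map $\tau\mapsto A(k)-\tau s(k)$ is exactly $[A(k)-|s(k)|,A(k)+|s(k)|]$) makes the ``iff'' cleaner than the paper's appeal to running the arguments backwards. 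One small slip: the sentence pairing the equation $f_\ga^\tau(k)=\mp R(k)$ with the condition $\mathcal{BC}_k^{\mp}$ has the signs crossed, since $A(k)-\tau s(k)=\mp R(k)$ is solvable for $\tau\in[-1,1]$ iff $\left\lvert A(k)\pm R(k)\right\rvert\le\left\lvert s(k)\right\rvert$, i.e.\ $\mathcal{BC}_k^{\pm}$; your concluding pairing ($\mathcal{G}(k)>0$ with $\mathcal{BC}_k^{+}$, $\mathcal{G}(k)<0$ with $\mathcal{BC}_k^{-}$) is nevertheless the correct one and matches \eqref{Pert,BandCon1}--\eqref{Pert,BandCon2}.
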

	\begin{proof}
        Let $E\in S_{\gamma,\wt\gamma}\cap(0,\infty)$.
        Then by Theorem~\ref{thm:per:comb}, $k=E^{1/2}$ satisfies \eqref{sol,gen} with some $\tau\in [-1,1]$.
        Let us assume, for example, that the first condition in \eqref{sol,gen} holds true. Substituting the explicit forms of the functions $f_{\gamma}^\tau(k)$ and $f_{\widetilde{\gamma}}^\tau(k)$ from \eqref{sc,alter,adiffb} into it, we can rewrite it after simple manipulations in the form
% ---------------------------------------------------- %
		\begin{equation}\label{simp,lam=lamp}
			( \widetilde{\gamma}-\gamma )\,	\frac{ \sin ka}{2 k}=\sqrt{  f^\tau_\gamma(k)^2-1},
		\end{equation}
% ---------------------------------------------------- %
		which requires $( \widetilde{\gamma}-\gamma )\,	\frac{ \sin ka}{2 k}>0$. Next, squaring both sides of \eqref{simp,lam=lamp}, we get
% ---------------------------------------------------- %
		\begin{equation}\label{squ,lam=lamp}
			f^\tau_\gamma(k)^2=\left(( \widetilde{\gamma}-\gamma )\,	\frac{ \sin ka}{2 k}\right)^2+1,
		\end{equation}
% ---------------------------------------------------- %
		which yields
% ---------------------------------------------------- %
		\begin{equation}\label{squRoot12,lam=lamp}
			f^\tau_\gamma(k)=\pm \sqrt{\left(( \widetilde{\gamma}-\gamma )\,	\frac{ \sin ka}{2 k}\right)^2+1}\,;
		\end{equation}
% ---------------------------------------------------- %
		the upper sign here contradicts the initial constraint $f^\tau_\gamma(k)<-1$ associated with the first condition in \eqref{sol,gen}. Substituting then the explicit form of $f^\tau_\gamma(k)$ into \eqref{squRoot12,lam=lamp} with the lower sign, we arrive after simple manipulations to the condition of the form
% ---------------------------------------------------- %
		\begin{equation}\label{equ,lam=lamPos}
			\frac{\sin ka }{\sin kb}\,\tau=\frac{ \sin k(a+b)}{\sin k b}+\frac{\gamma\, \sin ka}{2 k}+\sqrt{\left( (\widetilde{\gamma}-\gamma )\,	\frac{ \sin ka}{2 k} \right)^2+1},
		\end{equation}
% ---------------------------------------------------- %
		which, as mentioned above, has to be accompanied by the constraint $(\widetilde{\gamma}-\gamma )\,	\frac{ \sin ka}{2 k}>0$.
        Since $|\tau|\leq1$,
        \eqref{equ,lam=lamPos} implies $\mathcal{BC}_k^{+}$.
        Mimicking the argument for the second condition in \eqref{sol,gen}, in which case we have $f^\tau_\gamma(k)>1$, we arrive at
% ---------------------------------------------------- %
		\begin{equation}\label{equ,lam=lamNeg}
			\frac{\sin ka }{\sin kb}\,\tau=\frac{ \sin k(a+b)}{\sin k b}+\frac{\gamma\, \sin ka}{2 k}-\sqrt{\left( (\widetilde{\gamma}-\gamma )\,	\frac{ \sin ka}{2 k} \right)^2+1},
		\end{equation}
% ---------------------------------------------------- %
		accompanied by the constraint $(\widetilde{\gamma}-\gamma )\,\frac{ \sin ka}{2 k}<0$; evidently, \eqref{equ,lam=lamNeg} implies  $\mathcal{BC}_k^{+}$.
        Applying the above arguments   backwards, one can easily get the
        converse statement: \eqref{Pert,BandCon1,neg} (respectively, \eqref{Pert,BandCon2,neg}) implies the fulfillment of the first equality in \eqref{sol,gen,neg} (respectively, the second equality in \eqref{sol,gen,neg}) with some $\tau\in [-1,1]$.
             As for the negative spectrum, one has to replace simply $k$ by $i \kappa$ with $\kappa>0$ in \eqref{equ,lam=lamPos} and \eqref{equ,lam=lamNeg} as well as in the respective constraints.
	\end{proof}

	Figs.~\ref{threefigsRect}--\ref{threefigssquare} show the spectral bands of $\sigma(\H_\ga)$ (marked in gray) together with the \emph{newly added} spectral bands resulting from the perturbation; we show only the energies $E\in S\gaga$ \emph{inside} the spectral gaps of $\sigma(\H_\ga)$. We plot them as functions of $\widetilde{\gamma}$ both in commensurate and incommensurate cases; in the latter we choose the golden-mean as the `worst' irrational for the ratio of the edge lengths $a$ and $b$. To provide an insight, we use two different colors: the blue color for the new spectral bands determined by Eqs.~\eqref{Pert,BandCon1} and \eqref{Pert,BandCon1,neg}, and the red for those by Eqs.~\eqref{Pert,BandCon2} and \eqref{Pert,BandCon2,neg}.

	\begin{figure}[h]
		\centering
		\subfloat[\centering  $\gamma=10$,]{{\includegraphics[width=1.8 in]{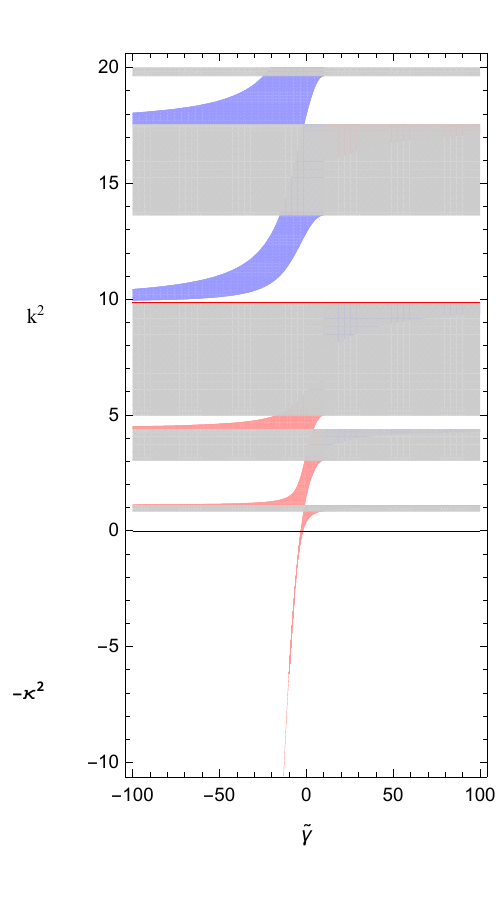} }\label{Sh2,Pert,gam+10,a1,b3,vs,gammt,perOver}}%
		\quad
		\subfloat[\centering $\gamma=-4$,]{{\includegraphics[width=1.8 in]{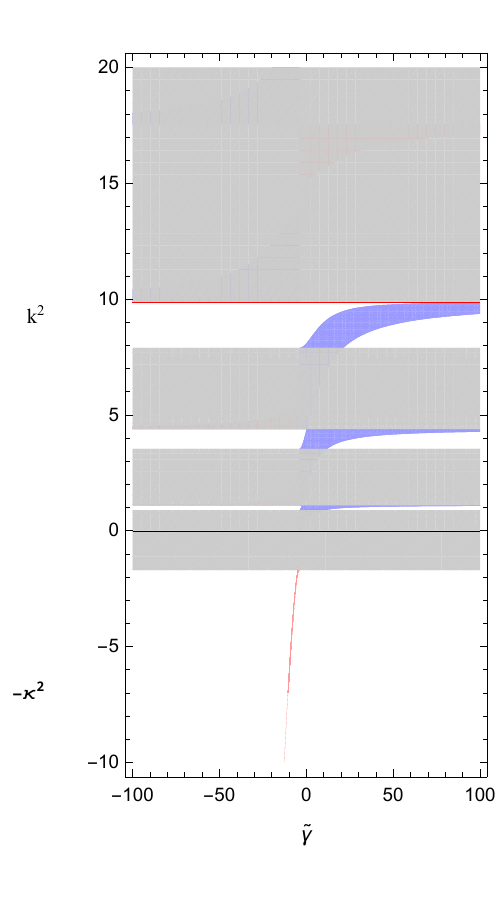} }\label{Sh1,Pert,gam-4,a1,b3,vs,gammt,PerOver}}%
		\quad
		\subfloat[\centering $\gamma=-10$.]{{\includegraphics[width=1.8 in]{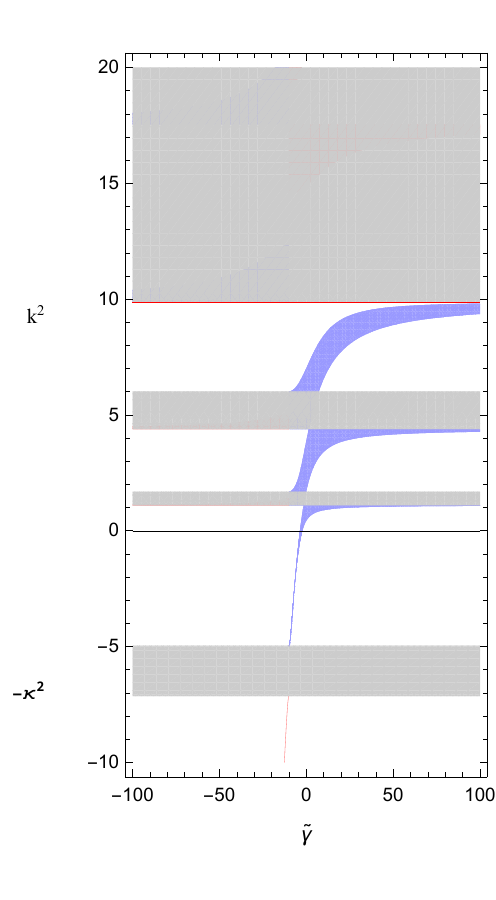} }\label{Sh3,Pert,gam-10,a1,b3,vs,gammt,PreOver}}%
		\caption{Spectrum of the perturbed rectangular lattice, $\sigma(\H\gaga)$, as a function of $\widetilde{\gamma}$\, for $a=1$, $b=3$ and three different values of  $\gamma$. The gray area represents the spectral bands of $\sigma(\H_{\gamma})$. The blue area (referring to conditions \eqref{Pert,BandCon1} and \eqref{Pert,BandCon1,neg}) and the red area (referring to conditions \eqref{Pert,BandCon2} and \eqref{Pert,BandCon2,neg}) represent the new spectral bands resulting from the perturbation, i.e. energies $E\in S\gaga$ appearing inside the gaps of $\sigma(\H_\ga)$. The red line corresponds to the flat bands $(\frac{n\pi}{a})^2=(\frac{m\pi}{b})^2$ with $n=1$ and $m=3$).}
		\label{threefigsRect}
	\end{figure}

\newpage

\begin{figure}[h]
	\centering
	\subfloat[\centering  $\gamma=30$,]{{\includegraphics[width=1.8 in]{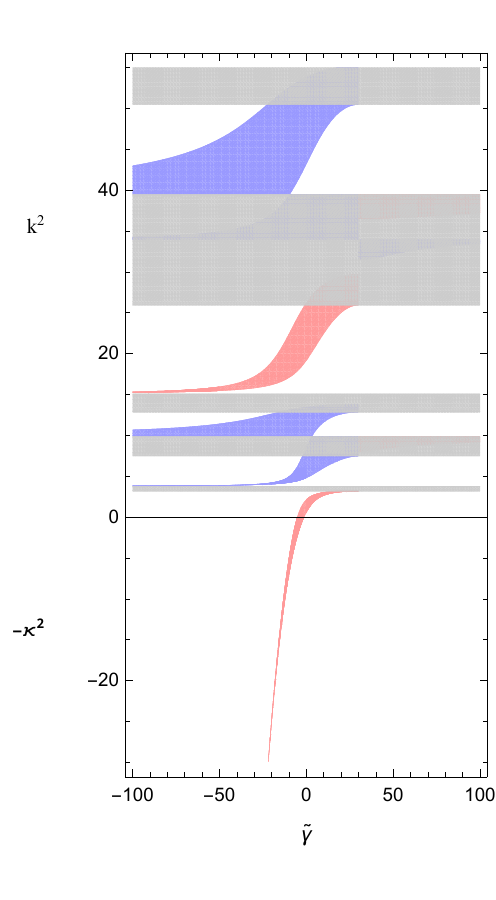} }\label{gam+30,abGolden}}%
	\quad
	\subfloat[\centering $\gamma=-5$,]{{\includegraphics[width=1.8 in]{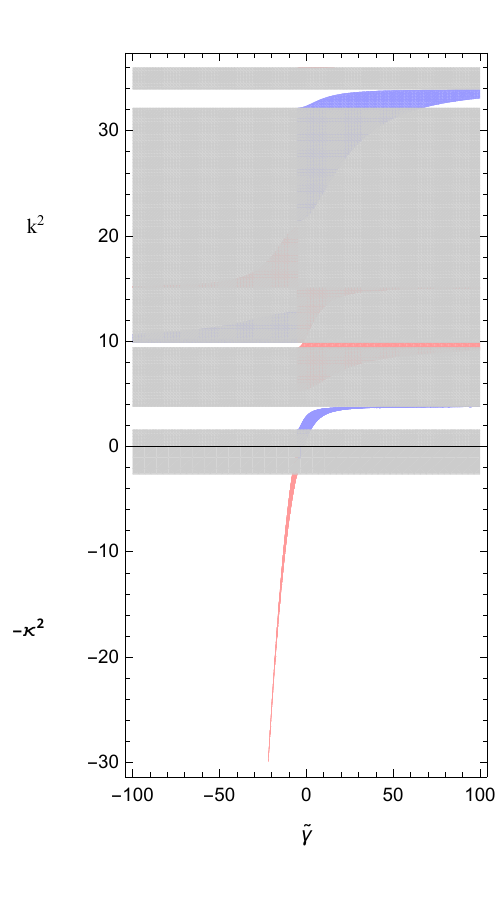} }\label{gam-5,abGolden}}%
	\quad
	\subfloat[\centering $\gamma=-25$.]{{\includegraphics[width=1.8 in]{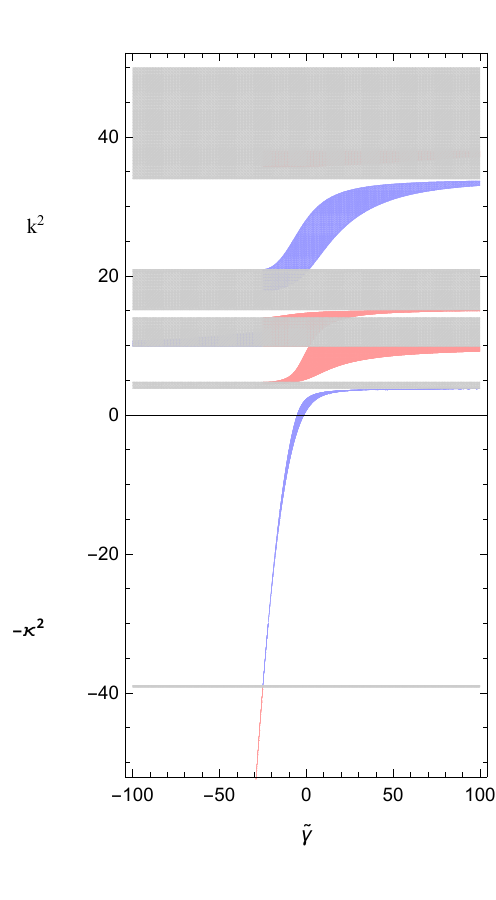} }\label{gam-25,abGolden}}%
	\caption{The same as above for the golden-mean ratio of the edges,  $a=\frac{\sqrt{5}+1}{2}$ and $b=1$, with three different values of $\gamma$ for which the number of open gaps is infinite, cf. Remark~\ref{rem:Bethe-Sommerfeld}.}
	\label{threefigsRectGolden}
\end{figure}

\begin{figure}[h]
	\centering
	\subfloat[\centering $\gamma=-15$,]{{\includegraphics[width=1.8 in]{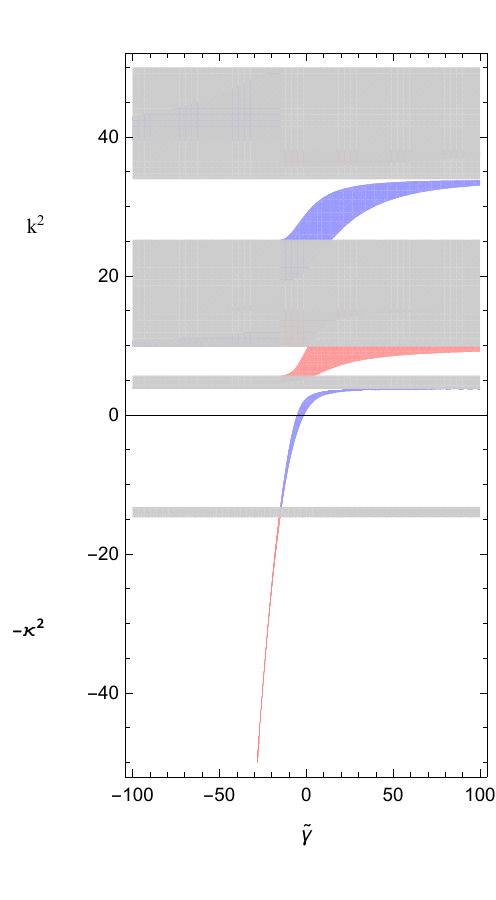} }\label{badGamma,minus15}}%
	\quad
	\subfloat[\centering $\gamma=-2.72$,]{{\includegraphics[width=1.8 in]{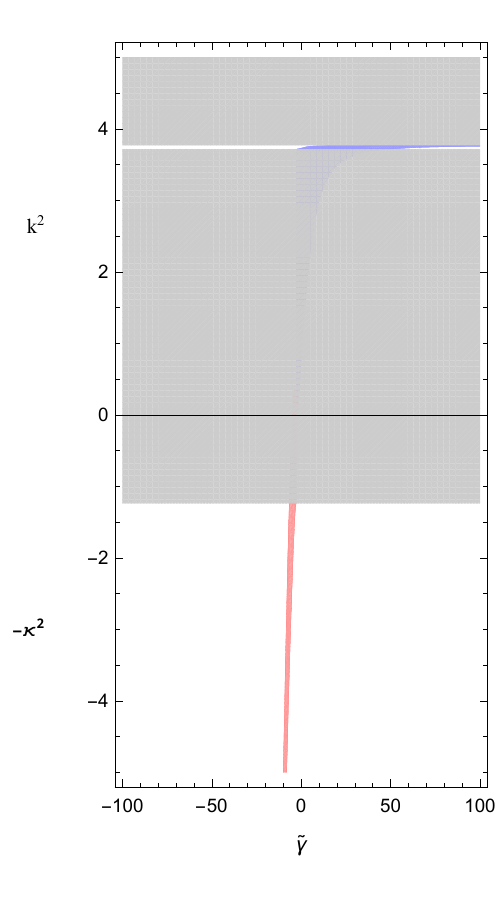} }\label{badGamma,minus2point72}}%
	\quad
	\subfloat[\centering $\gamma=-2$.]{{\includegraphics[width=1.8 in]{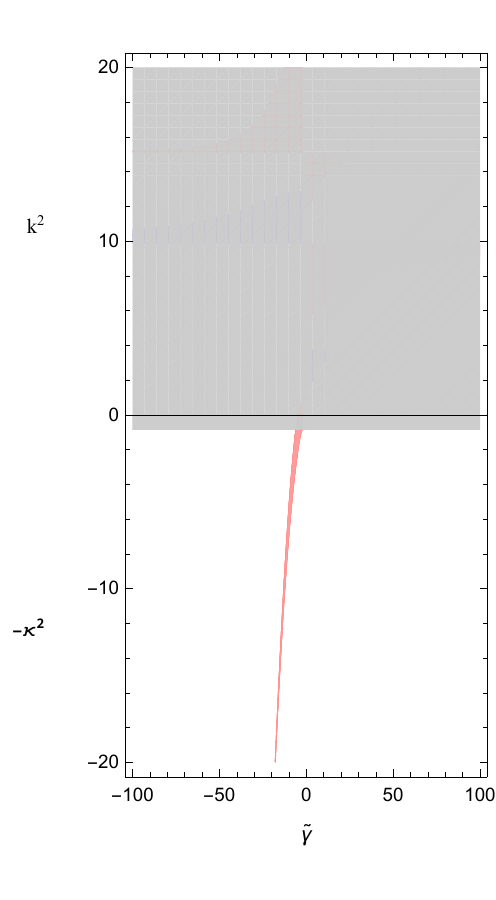} }\label{badGamma,minus2}}%
	\caption{The same as above for the golden-mean ratio of the edges,  $a=\frac{\sqrt{5}+1}{2}$ and $b=1$, with three particular values of $\gamma$ illustrating the situations in which the number of original open gaps is infinite, non-zero finite (a single one around $k^2\approx 3.75$ in this case), and zero (excluding the semi-infinite gap); cf. Remark~\ref{rem:Bethe-Sommerfeld}. Also, since it is not well visible at the chosen scale, let us mention that the upper edge curve of the new blue band, appearing inside the single gap of $\sigma(\H\gaga)$ in the plot \textbf{(b)}, does not touch the lower edge of the second original band which corresponds to $k=\frac{n\pi}a$ with $n=1$, or $k^2\approx 3.77$; in other words, the mentioned gap does not get completely closed in this case.}
	\label{threefigsRectGolden3}
\end{figure}

\newpage

	\begin{figure}[h]
		\centering
		\subfloat[\centering  $\gamma=10$,]{{\includegraphics[width=1.8 in]{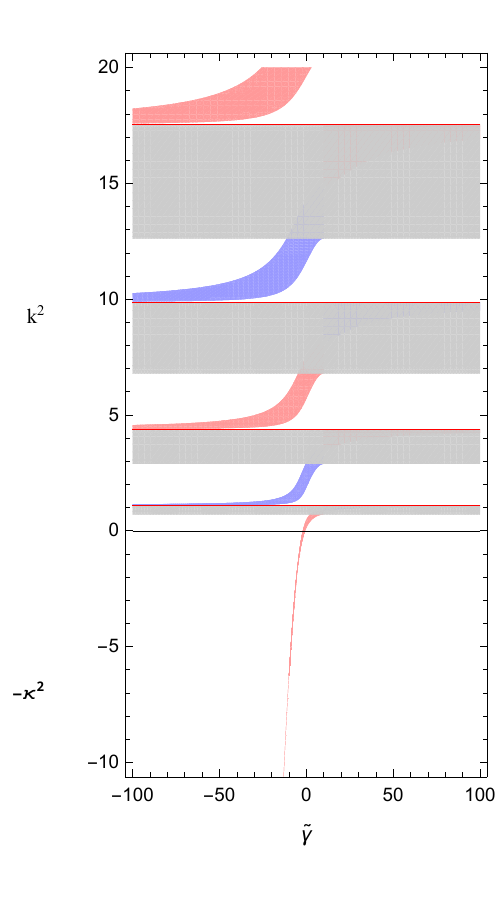} }\label{SQ,Sh2,Pert,gam+10,a3b3,vs,gammt,perOver}}%
		\quad
		\subfloat[\centering $\gamma=-4$,]{{\includegraphics[width=1.8 in]{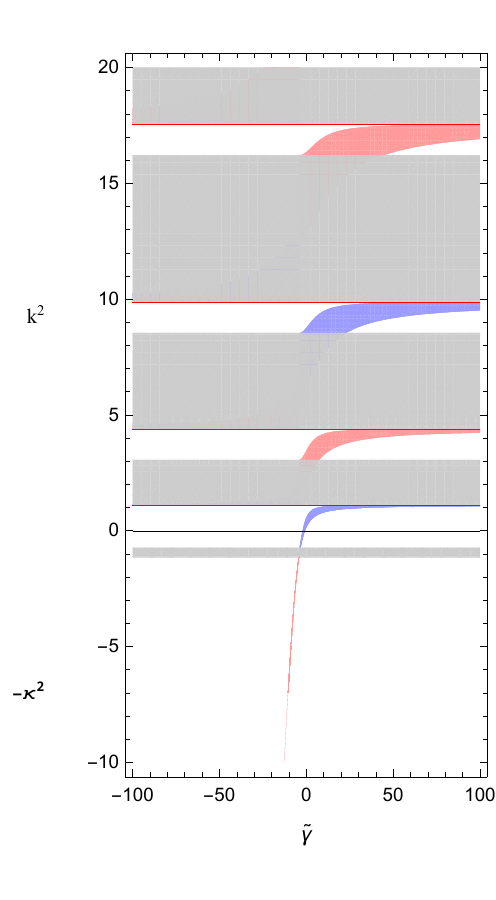} }\label{SQ,Sh1,Pert,gam-4,a3b3,vs,gammt,PerOver}}%
		\quad
		\subfloat[\centering $\gamma=-2$.]{{\includegraphics[width=1.8 in]{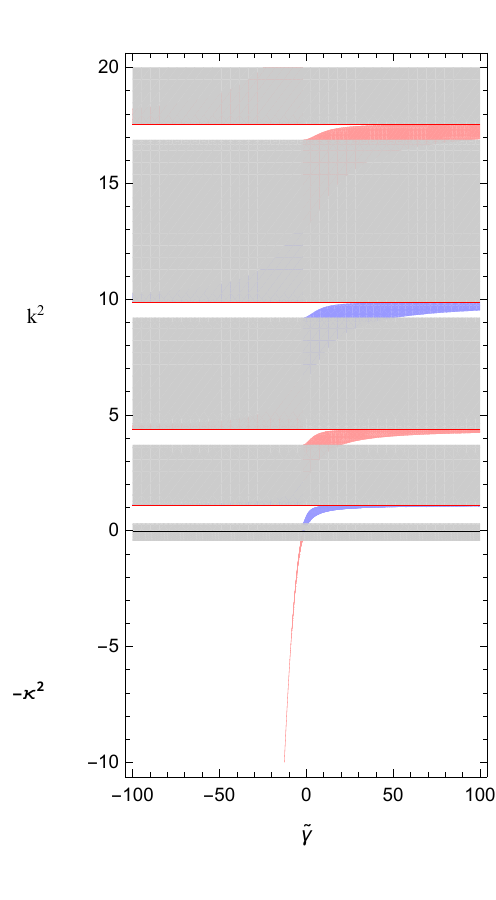} }\label{SQ,Sh3,Pert,gam-2,a3b3,vs,gammt,PreOver}}%
		\caption{The same as above for the square lattice case, $a=b=3$. The red lines refer to the flat bands $(\frac{n\pi}{a})^2$ with $n=1,2,3,4$. Again, since it is not well visible at the chosen scale, note that none of the edge curves of the new bands touches those edges of original bands corresponding to the mentioned flat bands; cf. Remark~\ref{remarkGapCloseSq}.}%
		\label{threefigssquare}%		
	\end{figure}

		In what follows, Propositions~\ref{proposition1}--\ref{proposition3} deal with the situations in which the perturbation \emph{does not} give rise to new spectral bands inside the spectral gaps of $\sigma(\H_\ga)$. In contrast, Proposition~\ref{proposition4} refers to situations when the perturbation \emph{does} induce to a new band inside \emph{each} spectral gap of $\sigma(\H_\ga)$. All these results will be summarized in Theorem~\ref{thmGen} at the end of this section.

	\begin{proposition}\label{proposition1}
		\textit{If $\widetilde{\gamma}>\gamma>0$, the spectral gaps of $\sigma(\H_\ga)$ remain open and unchanged under the perturbation, cf. Figs.~\ref{threefigsRect}, \ref{threefigsRectGolden} and \ref{threefigssquare}. }
	\end{proposition}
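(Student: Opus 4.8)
The plan is to prove the stronger statement $S\gaga\subseteq\sigma(\H_\ga)$, which settles the claim at once: since the coupling change is a rank-one resolvent perturbation of each fiber, the essential spectra of $\H_\ga^{\theta_2}$ and $\H\gaga^{\theta_2}$ agree, and as $\H_\ga^{\theta_2}$ has purely essential spectrum, \eqref{spectgaga} and \eqref{pertSpecUni} give $\sigma(\H\gaga)=\sigma(\H_\ga)\cup\overline{S\gaga}$; hence $S\gaga\subseteq\sigma(\H_\ga)$ yields $\sigma(\H\gaga)=\sigma(\H_\ga)$, so the gaps of $\sigma(\H_\ga)$, which are open by Proposition~\ref{prop:spec:positive}(iv), remain open and unchanged. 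By Proposition~\ref{cor:explicitBands} it is enough to show that, for $\wt\ga>\ga>0$, no $k$ (respectively $\kappa$) lying in a gap of $\sigma(\H_\ga)$ can satisfy any of the band conditions \eqref{Pert,BandCon1}, \eqref{Pert,BandCon2} (respectively \eqref{Pert,BandCon1,neg}, \eqref{Pert,BandCon2,neg}). I would argue by contraposition.

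For the positive part write $P:=\frac{\sin k(a+b)}{\sin kb}$, $Q:=\frac{\ga\sin ka}{2k}$ and $s:=\frac{\sin ka}{\sin kb}$, so that the periodic band condition \eqref{per,band,con} reads $|P+Q|\le 1+|s|$, while the square roots in \eqref{BCpmk} equal $R:=\sqrt{\big((\wt\ga-\ga)\tfrac{\sin ka}{2k}\big)^2+1}\ge 1$. The computation that drives everything is the elementary estimate
\[
|P|=\big|\,s\cos kb+\cos ka\,\big|\le |s|+1,
\]
which follows from $\sin k(a+b)=\sin ka\cos kb+\cos ka\sin kb$ and the triangle inequality. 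Assume now that $E=k^2$ lies in a gap, i.e. $|P+Q|>1+|s|$. If $P+Q>1+|s|$, then $Q=(P+Q)-P>(1+|s|)-|P|\ge 0$; since $\ga>0$ this forces $\sin ka>0$, so the requirement $\mathcal G(k)<0$ of \eqref{Pert,BandCon2} is violated, whereas \eqref{Pert,BandCon1} fails because $P+Q+R>|s|$, i.e. the left-hand side of $\mathcal{BC}_k^+$ exceeds $|s|$. Symmetrically, $P+Q<-(1+|s|)$ gives $Q<0$, hence $\sin ka<0$, which violates the requirement $\mathcal G(k)>0$ of \eqref{Pert,BandCon1}, while \eqref{Pert,BandCon2} fails because $|P+Q-R|=R-(P+Q)>1+|s|>|s|$. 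In either case $k\notin S\gaga$.

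For the non-positive part the sign structure is decisive on its own. One has $\wh{\mathcal G}(\kappa)=(\wt\ga-\ga)\frac{\sinh\kappa a}{2\kappa}>0$ for every $\kappa>0$, so \eqref{Pert,BandCon2,neg} is empty and only \eqref{Pert,BandCon1,neg} could contribute. But with $\ga>0$ all three summands on the left of $\wh{\mathcal{BC}}_\kappa^+$ in \eqref{BCpmkappa} are positive and the square root is at least $1$, so that left-hand side exceeds $\frac{\sinh\kappa(a+b)}{\sinh\kappa b}+1$, which is strictly larger than the right-hand side $\frac{\sinh\kappa a}{\sinh\kappa b}$ because $\sinh\kappa(a+b)+\sinh\kappa b>\sinh\kappa a$. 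Thus no negative eigenvalue is created, and the positive part of the semi-infinite gap is already covered by the previous paragraph.

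The step I expect to be the real obstacle, and the reason the statement needs $\ga>0$ and not merely $\wt\ga>\ga$, is the lower-edge estimate. A direct manipulation of the band inequalities does not close: from $|P+Q+R|\le|s|$ one only extracts $P+Q\ge-|s|-R$, and because $R\ge 1$ the slack points the wrong way, so this alone does not give $P+Q\ge-(1+|s|)$. What saves the argument is that membership in a gap forces the sign of $\sin ka$ to be exactly the one incompatible with the sign constraint $\mathcal G$ (respectively $\wh{\mathcal G}$) attached to the relevant band condition; this sign-forcing uses $\ga>0$ to link the sign of $Q$ to that of $\sin ka$, and $\wt\ga>\ga$ to link the sign of $\mathcal G$ to the same. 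Finally, the degenerate momenta $k\in\Xi$ either are flat-band points already in $\sigma(\H_\ga)$ or carry $\mathcal G(k)=0$ and thus admit no new eigenvalue, and in any case are absorbed by the closure in \eqref{spectgaga}.
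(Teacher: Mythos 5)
Your proposal is correct and follows essentially the same route as the paper's own proof: the gap conditions force the sign of $\frac{\gamma\sin ka}{2k}$ via the estimate $\bigl|\frac{\sin k(a+b)}{\sin kb}\bigr|\le 1+\bigl|\frac{\sin ka}{\sin kb}\bigr|$, which (using $\gamma>0$ and $\widetilde\gamma>\gamma$) contradicts the sign constraint $\mathcal{G}(k)\lessgtr 0$ in the cross-paired band condition, while the same-signed pairings are excluded by the positivity of the square root in $\mathcal{BC}_k^{\pm}$. Your direct verification that $\wh{\mathcal{BC}}_\kappa^{+}$ fails for all $\kappa>0$ when $\gamma>0$ coincides in substance with the alternative justification the paper itself offers for the negative part.
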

% ---------------------------------------------------- %
	\begin{proof}
		Let us start with the \emph{positive} spectrum. Such a behavior can be established by inspecting whether it is possible that an eigenvalue $E=k^2\in S\gaga$, i.e. referring to $k$ satisfying the conditions \eqref{Pert,BandCon1} and \eqref{Pert,BandCon2}, may appear inside the spectral gaps of $\sigma(\H_\ga)$ or not. First of all, from \eqref{per,band,con} we infer that a number $k^2$ belongs to the spectral gaps of $\sigma(\H_\ga)$ if and only if $k$ satisfies either the gap condition
% ---------------------------------------------------- %
		\begin{equation}\label{per,gap,con1}
			 	\frac{\gamma \, \sin ka}{2 k}+\frac{ \sin k(a+b)}{\sin k b}  >  1+\left\lvert \frac{\sin ka }{\sin kb}\right\rvert ,
		\end{equation}
	or
		\begin{equation}\label{per,gap,con2}
		 \frac{\gamma \, \sin ka}{2 k}+\frac{ \sin k(a+b)}{\sin k b}  <  -1-\left\lvert \frac{\sin ka }{\sin kb}\right\rvert .
		\end{equation}
% ---------------------------------------------------- %
	Next, comparing these inequalities with the conditions ensuring that $k$ belongs to the positive part of the set $S\gaga$, i.e. Eqs.~\eqref{Pert,BandCon1} and \eqref{Pert,BandCon2}, one immediately finds that the gap condition \eqref{per,gap,con1} (respectively, \eqref{per,gap,con2}) cannot be fulfilled simultaneously with $\mathcal{BC}_k^+$ in \eqref{Pert,BandCon1} (respectively, $\mathcal{BC}_k^-$ in \eqref{Pert,BandCon2}) in view of the fact that the square root term in $\mathcal{BC}_k^{\pm}$ in \eqref{BCpmk} is always positive. Hence it remains to check whether the conditions \eqref{per,gap,con1} (respectively, \eqref{per,gap,con2}) and  \eqref{Pert,BandCon2}  (respectively, \eqref{Pert,BandCon1}) can be fulfilled simultaneously or not. To this aim, we note first that in view of the inequality
	$$
	\left\lvert \frac{ \sin k(a+b)}{\sin k b} \right\rvert=\left\lvert \cos ka +\cos kb\; \frac{\sin ka }{\sin kb} \right\rvert \leq  1+\left\lvert \frac{\sin ka }{\sin kb}\right\rvert,
	$$
	the conditions \eqref{per,gap,con1} and \eqref{per,gap,con2} imply $\frac{\gamma \, \sin ka}{2 k}>0$ and $\frac{\gamma \, \sin ka}{2 k}<0$, respectively. On the other hand, one easily checks that if  $\widetilde{\gamma}>\gamma>0$ (respectively, if $\widetilde{\gamma}<\gamma<0$), the necessary constraint $\mathcal{G}(k)<0$  in \eqref{Pert,BandCon2} (respectively, $\mathcal{G}(k)>0$ in \eqref{Pert,BandCon1}) implies  for $\frac{\gamma \, \sin ka}{2 k}<0$  (respectively,  $\frac{\gamma \, \sin ka}{2 k}>0$). This entails that the gap conditions \eqref{per,gap,con1}, \eqref{per,gap,con2}, cannot be satisfied simultaneously with the band conditions  \eqref{Pert,BandCon1}, \eqref{Pert,BandCon2} for $\widetilde{\gamma}>\gamma>0$, as well as for $\widetilde{\gamma}<\gamma<0$ where the latter case will be the object of the next proposition.

The proof for the \emph{negative} spectrum proceeds in a similar way, by inspecting the conditions determining the spectrum in the gaps of $\sigma(\H_\ga)$. From \eqref{per,neg,band,con}, a negative eigenvalue $-\kappa^2$ belongs to a spectral gap of $\sigma(\H_\ga)$ iff $\kappa$ satisfies either the gap condition
% ---------------------------------------------------- %
	\begin{equation}\label{per,neg,gap,con1}
		 	\frac{\gamma \, \sinh \kappa a}{2 \kappa}+  \frac{\sinh \kappa (a+b)}{\sinh \kappa b}<
		-1-\frac{\sinh \kappa a }{\sinh \kappa b}\;,
	\end{equation}
or
	\begin{equation}\label{per,neg,gap,con2}
		 	\frac{\gamma \, \sinh \kappa a}{2 \kappa}+  \frac{\sinh \kappa (a+b)}{\sinh \kappa b}>
		1+  \frac{\sinh \kappa a }{\sinh \kappa b} .
	\end{equation}
% ---------------------------------------------------- %
	Note that conditions \eqref{per,neg,gap,con1} and \eqref{per,neg,gap,con2} refer, respectively, to the spectral gap \emph{above} and \emph{below} $\inf\sigma(\H_\ga)$ where the latter is semi-infinite; recall also that the negative spectrum of $\H_\ga$ cannot have more than one  band. Next, mimicking the above argument, by comparing  the gap conditions \eqref{per,neg,gap,con1} and \eqref{per,neg,gap,con2} with those determining the negative part of $S\gaga$, i.e. \eqref{Pert,BandCon1,neg} and \eqref{Pert,BandCon2,neg}, we easily find that \eqref{per,neg,gap,con1} (respectively, \eqref{per,neg,gap,con2}) is incompatible with $\wh{\mathcal{BC}}_\kappa^-$ in  \eqref{Pert,BandCon2,neg} (respectively, with $\wh{\mathcal{BC}}_\kappa^+$ in  \eqref{Pert,BandCon1,neg}).
	
	It remains thus to check whether the conditions \eqref{per,neg,gap,con1} (respectively, \eqref{per,neg,gap,con2}) and \eqref{Pert,BandCon1,neg}  (respectively, \eqref{Pert,BandCon2,neg}) can be fulfilled simultaneously or not. To begin with, taking into account the definition of the function $ \wh{\mathcal{G}}(\kappa):=(\widetilde{\gamma}-\gamma )\,\frac{ \sinh \kappa a}{2 \kappa }$ and the inequality $\sinh x>0$ for $x>0$, we find that the conditions \eqref{Pert,BandCon1,neg} and  \eqref{Pert,BandCon2,neg} are fulfilled only for $\widetilde{\gamma}>\gamma$ and $\widetilde{\gamma}<\gamma$, respectively. Consequently, for $\widetilde{\gamma}>\gamma$ which is the case we are interested in here, the only possibility that an eigenvalue $-\kappa^2\in S\gaga$ belongs to a spectral gap of $\sigma(\H_\ga)$ is that $\kappa$ simultaneously satisfies the band condition \eqref{Pert,BandCon1,neg} and the gap condition \eqref{per,neg,gap,con1}. However, as mentioned above, \eqref{per,neg,gap,con1} refers to the gap \emph{above} $\inf\sigma(\H_{\gamma})$ which is strictly positive for $\gamma>0$, thus the claim follows.	

Let us note that the result could also be justified by manipulating the condition \eqref{per,neg,gap,con1} and rewriting it in the form
$$
\frac{\gamma \, \sinh \kappa a}{2 \kappa} <  -(\sinh \kappa a+\sinh \kappa b)\coth \frac{\kappa b}{2}\,,
$$
in which the function on the right-hand side is negative in view of that $\sinh x>0$ and $\coth x>0$ for $x>0$. This implies that \eqref{per,neg,gap,con1} holds only for $\gamma<0$ which is excluded by assumption; this confirms one more time the claim.
\end{proof}

\begin{proposition}\label{proposition2}
	\textit{If $\widetilde{\gamma}<\gamma<0$, the original spectral gaps above $\inf\sigma(\H_{\gamma})$ remain open and unchanged under the perturbation, cf. Figs.~\ref{threefigsRect}--\ref{threefigssquare}.   }
\end{proposition}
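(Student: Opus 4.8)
The plan is to argue in close parallel with the proof of Proposition~\ref{proposition1}, exploiting the fact that the analysis of the \emph{positive} spectrum carried out there already covers the present regime. Indeed, the incompatibility of the positive gap conditions \eqref{per,gap,con1}--\eqref{per,gap,con2} with the positive band conditions \eqref{Pert,BandCon1}--\eqref{Pert,BandCon2} was established there simultaneously for $\wt\ga>\ga>0$ and for $\wt\ga<\ga<0$; hence no eigenvalue $E=k^2\in S\gaga$ can enter a positive spectral gap of $\sigma(\H_\ga)$ in our case either. It therefore suffices to treat the negative spectrum, and specifically to rule out new eigenvalues in the negative gap lying \emph{above} $\inf\sigma(\H_\ga)$.

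First I would observe that the sign of $\wt\ga-\ga$ selects which of the two negative band conditions can be active. Since $\wh{\mathcal{G}}(\kappa)=(\wt\ga-\ga)\,\frac{\sinh\kappa a}{2\kappa}$ and $\sinh\kappa a>0$ for $\kappa>0$, the hypothesis $\wt\ga<\ga$ forces $\wh{\mathcal{G}}(\kappa)<0$, so that only the band condition \eqref{Pert,BandCon2,neg}, built on $\wh{\mathcal{BC}}_\kappa^-$, can be fulfilled; the competing condition \eqref{Pert,BandCon1,neg} is excluded outright. Note that this uses only $\wt\ga<\ga$, not the full hypothesis $\wt\ga<\ga<0$.

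Next I would invoke the compatibility analysis already recorded in Proposition~\ref{proposition1}, where it was shown that the gap condition \eqref{per,neg,gap,con1} (the gap above $\inf\sigma(\H_\ga)$) is incompatible with $\wh{\mathcal{BC}}_\kappa^-$, while \eqref{per,neg,gap,con2} (the semi-infinite gap below $\inf\sigma(\H_\ga)$) is incompatible with $\wh{\mathcal{BC}}_\kappa^+$. Combining this with the previous step, the only admissible pairing for $\wt\ga<\ga<0$ is \eqref{Pert,BandCon2,neg} together with \eqref{per,neg,gap,con2}. In other words, any perturbation-generated negative eigenvalue is confined to the semi-infinite gap below $\inf\sigma(\H_\ga)$, and none can appear in the negative gap above the threshold. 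Together with the positive-spectrum conclusion this establishes that every gap of $\sigma(\H_\ga)$ situated above $\inf\sigma(\H_\ga)$ stays open and unaltered.

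The only delicate point is the bookkeeping of signs: one must keep straight that the constraint $\wt\ga<\ga$ already fixes the active band condition through $\wh{\mathcal{G}}$, and that the semi-infinite gap is deliberately excluded from the statement precisely because it is there that a genuinely new bound state is expected to emerge (the object of Proposition~\ref{proposition4}). No new estimates are needed beyond those in \eqref{per,neg,band,con} and the incompatibility observations of Proposition~\ref{proposition1}.
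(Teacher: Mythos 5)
Your proposal is correct and follows essentially the same route as the paper: the positive spectrum is disposed of by noting that Proposition~\ref{proposition1} already covered the case $\widetilde\gamma<\gamma<0$, and the negative gap above $\inf\sigma(\H_\gamma)$ is excluded by combining the two facts established there, namely that \eqref{per,neg,gap,con1} is incompatible with $\wh{\mathcal{BC}}_\kappa^-$ and that the sign of $\wh{\mathcal{G}}$ ties \eqref{Pert,BandCon1,neg} to $\widetilde\gamma>\gamma$. The only difference is that you run this implication in the contrapositive direction (hypothesis $\widetilde\gamma<\gamma$ selects the active band condition, which then clashes with the gap condition), whereas the paper starts from the gap condition and deduces $\widetilde\gamma>\gamma$; the two are logically identical.
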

% ---------------------------------------------------- %
\begin{proof}
The claim for the \emph{positive} part of the spectrum has already been proved in Proposition~\ref{proposition1}. Concerning the \emph{negative} part, let us first recall that $\sigma(\H_{\gamma})$ contains a single negative band if $\gamma$ is negative, and that \eqref{per,neg,gap,con1} is the gap condition referring to the gap \emph{above} $\inf\sigma(\H_{\gamma})$. On the other hand, in Proposition~\ref{proposition1} we proved that \eqref{per,neg,gap,con1} can be fulfilled simultaneously \emph{only} with the band condition \eqref{Pert,BandCon1,neg} which is itself valid exclusively for $\widetilde{\gamma}>\gamma$; this yields the claim.
\end{proof}

\begin{proposition}\label{proposition3}
	\textit{If $\gamma<0$ and $\widetilde{\gamma}>\gamma$, the semi-infinite gap below $\inf\sigma(\H_{\gamma})$ remains open and unchanged under the perturbation, cf. Figs.~\ref{threefigsRect}--\ref{threefigssquare}  .}
\end{proposition}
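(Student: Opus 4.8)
The plan is to reduce the statement entirely to the case analysis for the negative spectrum already carried out in the proof of Proposition~\ref{proposition1}. Since $\gamma<0$, Proposition~\ref{prop:spec:negative} gives $\inf\sigma(\H_\ga)=-\kappa_1^2<0$, so the semi-infinite gap below it, namely $(-\infty,-\kappa_1^2)$, consists entirely of negative energies; accordingly only the negative part of $S\gaga$ is relevant, and the positive spectrum plays no role here. As recorded in the proof of Proposition~\ref{proposition1}, a negative energy $-\kappa^2$ lies in this particular gap precisely when $\kappa$ satisfies the gap condition \eqref{per,neg,gap,con2} (the companion condition \eqref{per,neg,gap,con1} refers to the finite gap \emph{above} $\inf\sigma(\H_\ga)$, which is irrelevant here).

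First I would suppose, toward a contradiction, that some $-\kappa^2\in S\gaga$ lies in this gap, so that $\kappa$ satisfies \eqref{per,neg,gap,con2} together with one of the two negative band conditions \eqref{Pert,BandCon1,neg} or \eqref{Pert,BandCon2,neg}. The first option is excluded at once: the proof of Proposition~\ref{proposition1} shows that \eqref{per,neg,gap,con2} is incompatible with $\wh{\mathcal{BC}}_\kappa^{+}$, which is exactly the band inequality appearing in \eqref{Pert,BandCon1,neg}. Concretely, writing $A$ for the combination $\frac{\sinh\kappa(a+b)}{\sinh\kappa b}+\frac{\gamma\sinh\kappa a}{2\kappa}$ and $S$ for $\frac{\sinh\kappa a}{\sinh\kappa b}>0$, the gap condition \eqref{per,neg,gap,con2} forces $A>1+S$, whereas $\wh{\mathcal{BC}}_\kappa^{+}$ forces $A+\sqrt{\wh{\mathcal{G}}(\kappa)^2+1}\le S$ and hence $A\le S-\sqrt{\wh{\mathcal{G}}(\kappa)^2+1}<S$, an outright contradiction.

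The second option is ruled out by the sign of the coupling-constant change. The band condition \eqref{Pert,BandCon2,neg} carries the constraint $\wh{\mathcal{G}}(\kappa)<0$, and since $\wh{\mathcal{G}}(\kappa)=(\widetilde{\gamma}-\gamma)\frac{\sinh\kappa a}{2\kappa}$ with $\sinh\kappa a>0$ for $\kappa,a>0$, this forces $\widetilde{\gamma}<\gamma$, contradicting the standing hypothesis $\widetilde{\gamma}>\gamma$. Thus neither band condition can hold alongside \eqref{per,neg,gap,con2}, so no perturbation-induced state can enter the semi-infinite gap, which therefore remains open and unchanged.

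I expect no genuine obstacle: the proposition is essentially a corollary of the incompatibilities established for the negative spectrum in Proposition~\ref{proposition1}, and the only point to make fully explicit is that the semi-infinite gap is the one governed by \eqref{per,neg,gap,con2} rather than \eqref{per,neg,gap,con1}, so that the operative incompatibility is precisely the one with $\wh{\mathcal{BC}}_\kappa^{+}$, after which the sign constraint $\wh{\mathcal{G}}(\kappa)<0$ in \eqref{Pert,BandCon2,neg} finishes the argument.
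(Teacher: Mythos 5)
Your proposal is correct and follows essentially the same route as the paper's proof: both reduce the claim to the negative-spectrum case analysis from Proposition~\ref{proposition1}, identify \eqref{per,neg,gap,con2} as the condition for the semi-infinite gap, and observe that it is incompatible with $\wh{\mathcal{BC}}_\kappa^{+}$ while the remaining option \eqref{Pert,BandCon2,neg} is excluded by the sign constraint $\wh{\mathcal{G}}(\kappa)<0$, i.e.\ $\widetilde{\gamma}<\gamma$. The only difference is cosmetic: you re-derive the incompatibility with $\wh{\mathcal{BC}}_\kappa^{+}$ explicitly, whereas the paper simply cites it from the proof of Proposition~\ref{proposition1}.
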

% ---------------------------------------------------- %
\begin{proof}
Recall that \eqref{per,neg,gap,con2} is the gap condition referring to the semi-infinite gap \emph{below} $\inf\sigma(\H_{\gamma})$ which is strictly negative if $\gamma<0$. On the other hand, in Proposition~\ref{proposition1} we demonstrated that \eqref{per,neg,gap,con2} can be fulfilled simultaneously only with the band condition \eqref{Pert,BandCon2,neg} which is itself valid exclusively for $\widetilde{\gamma}<\gamma$; this proves the claim.
\end{proof}

Apart from the situations mentioned in Propositions~\ref{proposition1}--\ref{proposition3} in which the indicated spectral gaps of $\sigma(\H_\ga)$ remained unchanged under the perturbation, there is always a new spectral band of $S\gaga$ in each spectral gap of $\sigma(\H_\ga)$ which, as we will see below, may or may not touch the edges of the original bands of $\sigma(\H_\ga)$.

More precisely, inspecting Figs.~\ref{threefigsRect}--\ref{threefigssquare}, we observe that for $\widetilde{\gamma}\leq\gamma$, there is always a new spectral band of $S\gaga$ in the first gap below the threshold of continuous spectrum, moving from $\inf\sigma(\H_{\gamma})$ towards $-\infty$ as $\widetilde{\gamma}$ decreases. In contrast, in each original gap above $\inf\,\sigma(\H_{\gamma})$, there is always a new spectral band of $S\gaga$ in the whole range $\widetilde{\gamma}\ge \gamma$ (respectively, $\widetilde{\gamma}\leq\gamma$) for $\gamma<0$  (respectively, $\gamma>0$).

These observations will be proved in Proposition~\ref{proposition4} below; to proceed towards the proof, let us start with the following statements on the functions describing the endpoints of the spectral bands in $S\gaga$.

\begin{remark}\label{RemarkBords}
The conditions \eqref{equ,lam=lamPos} and \eqref{equ,lam=lamNeg}, characterizing the new positive spectral bands resulting from the perturbation, i.e. $k^2\in S\gaga\,$, can be rewritten in the new form
% ---------------------------------------------------- %
	\begin{align}\notag
		\widetilde{\gamma} &=\gamma\pm\frac{2k}{\sin ka}\sqrt{ \left( \frac{\gamma \, \sin ka}{2 k}+\frac{ \sin k(a+b)}{\sin k b}-\tau\, \frac{\sin ka }{\sin kb}\right)^2-1}\\\label{proof,lam=lamp,equiv}
        &=
        \gamma\pm\frac{2k}{\sin ka}\sqrt{ \left(f_\ga^\tau(k)\right)^2-1}
        =:g_{\pm}(\tau;k).
	\end{align}
% ---------------------------------------------------- %
Note that $g_{+}(\tau;k)$ and $g_{-}(\tau;k)$ in \eqref{proof,lam=lamp,equiv} are, respectively, associated with the constraints $f_\ga^\tau(k)<-1$ and $f_\ga^\tau(k)>1$, see Theorem~\ref{thm:per:comb} and Proposition~\ref{cor:explicitBands}.
Similarly, replacing $k$ by $i\kappa$ with $\kappa>0$ in \eqref{proof,lam=lamp,equiv},  we conclude that the negative bands of $S\gaga$ are characterized by
% ---------------------------------------------------- %
	\begin{align}\notag
		\widetilde{\gamma} &=\gamma\pm\frac{2\kappa}{\sinh \kappa a}\sqrt{ \left( \frac{\gamma \, \sinh \kappa a}{2 \kappa}+\frac{ \sinh \kappa(a+b)}{\sinh \kappa b}-\tau\, \frac{\sinh \kappa a }{\sinh \kappa b}\right)^2-1}
        \\\label{neg,gpm}
        &=\gamma\pm\frac{2\kappa}{\sinh \kappa a}\sqrt{\left(\wh f_\gamma^\tau(k)\right)^2-1}
        =:\wh g_{\pm}(\tau;\kappa)\, ,
	\end{align}
% ---------------------------------------------------- %
where, again, the upper and lower signs in the condition are, respectively, accompanied by the constraints ${\wh f_\ga^\tau}(\kappa)<-1$ and ${\wh f_\ga^\tau}(\kappa)>1$.
\end{remark}

\begin{remark}\label{RemarkBordsInf}
	Taking into account the arguments in Propositions~\ref{proposition2} and \ref{proposition3} as well as the sign of the difference $\widetilde{\gamma}-\gamma$ in \eqref{neg,gpm}, we conclude that the functions ${\wh g_{-}}(\tau;\kappa)$ and ${\wh g_{+}}(\tau;\kappa)$ in \eqref{neg,gpm} are related, respectively, to the newly added eigenvalues induced by the perturbation in the gaps \emph{below} and \emph{above} $\inf\sigma(\H_{\gamma})$.
\end{remark}

\begin{lemma}	\label{monoton}
Let $(\lambda_-,\lambda_+){\subset (0,\infty)\setminus\sigma(\H_\gamma)}$   { (and, hence, $(\lambda_-,\lambda_+)\subset (0,\infty)\setminus\sigma(\H_\gamma^{\theta_2})$ for each $\theta_2\in [-\pi,\pi]$)}. Then for each value of the Floquet parameter $\theta_2\in [-\pi,\pi]$, or equivalently for $\tau:=\cos\theta_2\in [-1,1]$, the functions $k\mapsto g_{\pm}(\tau;k)$ in \eqref{proof,lam=lamp,equiv} are monotonous in $ (\sqrt{\lambda_-},\sqrt{\lambda_+})$. Furthermore, for fixed $k\in (\sqrt{\lambda_-},\sqrt{\lambda_+})$, the functions $[-1,1]\ni\tau\mapsto g_{\pm}(\tau;k)$ are also monotonous.
\end{lemma}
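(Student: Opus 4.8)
The plan is to isolate one preliminary fact that feeds both halves of the statement and then treat the $k$- and $\tau$-monotonicities by different routes. First I would record that on the whole gap $(\sqrt{\lambda_-},\sqrt{\lambda_+})$ the quantities $\sin ka$ and $\sin kb$ are nonzero and of constant sign: indeed $\Xi\subset\sigma(\H_\ga)$ by Proposition~\ref{prop:spec:positive}(i), so the interval contains no zero of $\sin ka$ or $\sin kb$. Likewise $|f_\ga^\tau(k)|>1$ on the connected set $[-1,1]\times(\sqrt{\lambda_-},\sqrt{\lambda_+})$, hence by continuity $f_\ga^\tau$ cannot traverse the strip $[-1,1]$ and keeps a constant sign there; this is exactly what fixes whether the active branch is $g_+$ (where $f_\ga^\tau<-1$) or $g_-$ (where $f_\ga^\tau>1$). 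Finally, from \eqref{proof,lam=lamp,equiv} one has $g_+(\tau;k)+g_-(\tau;k)=2\gamma$, so it suffices to establish monotonicity of a single branch, the other following by reflection.

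For the monotonicity in $\tau$ at fixed $k$ I would argue directly. Writing $f_\ga^\tau(k)=A(k)-\tau\,\tfrac{\sin ka}{\sin kb}$ with $A$ independent of $\tau$, differentiation of \eqref{proof,lam=lamp,equiv} gives
\[
\frac{\partial g_\pm}{\partial\tau}=\mp\,\frac{2k}{\sin kb}\cdot\frac{f_\ga^\tau(k)}{\sqrt{(f_\ga^\tau(k))^2-1}}.
\]
On the gap the factor $\tfrac{2k}{\sin kb}$ has a fixed sign, while $\tfrac{f_\ga^\tau}{\sqrt{(f_\ga^\tau)^2-1}}$ carries the (constant) sign of $f_\ga^\tau$ and is nowhere zero; hence $\partial_\tau g_\pm$ does not vanish and does not change sign, so $\tau\mapsto g_\pm(\tau;k)$ is strictly monotone.

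For the monotonicity in $k$ at fixed $\tau=\cos\theta_2$ I would avoid differentiating the explicit expression and instead read the defining relation $\widetilde{\gamma}=g_\pm(\tau;k)$ through Theorem~\ref{thm:per:comb}: it says precisely that $E=k^2$ is the (unique, by the rank-one structure recalled after Proposition~\ref{prop:per:comb}) discrete eigenvalue of $\H\gaga^{\theta_2}$ in the gap of $\H_\ga^{\theta_2}$. Thus $g_\pm(\tau;\cdot)$ is the inverse of the map $\widetilde{\gamma}\mapsto E(\widetilde{\gamma})$ assigning to a coupling the position of that gap eigenvalue, and it suffices to show $E(\widetilde{\gamma})$ is strictly monotone. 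Since the perturbation adds to the quadratic form the rank-one term $\widetilde{\gamma}\,|\psi(x_0)|^2$, the isolated eigenvalue $E(\widetilde{\gamma})$ is real-analytic and, by the Feynman–Hellmann formula, $\tfrac{dE}{d\widetilde{\gamma}}=|\psi(x_0)|^2$ for the normalized eigenfunction $\psi$. Here $\psi(x_0)\neq0$, for otherwise $\psi$ would also satisfy the $\delta$-condition of strength $\gamma$ at $x_0$ and $E$ would lie in $\sigma(\H_\ga^{\theta_2})$, contradicting $E$ being in the gap. Hence $E(\widetilde{\gamma})$ is strictly increasing; its inverse is strictly increasing, and composing with the monotone map $k\mapsto k^2$ on $(0,\infty)$ shows $g_\pm(\tau;\cdot)$ is strictly monotone in $k$.

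The negative-energy statements follow verbatim by replacing $k$ with $i\kappa$, using \eqref{neg,gpm} and the hyperbolic analogues, the bookkeeping being even simpler since $\sinh\kappa a,\sinh\kappa b>0$. I expect the $k$-monotonicity to be the main obstacle: one must be sure the eigenvalue-monotonicity argument is legitimate, i.e. that the gap eigenvalue is simple and analytic in $\widetilde{\gamma}$ (supplied by the rank-one/at-most-one-eigenvalue structure already invoked) and that $\psi(x_0)\neq0$. Should a self-contained computation be preferred, one can instead differentiate the implicit relation $\Phi(\widetilde{\gamma},k):=\bigl((\widetilde{\gamma}-\gamma)\tfrac{\sin ka}{2k}\bigr)^2-(f_\ga^\tau(k))^2+1=0$; the genuine difficulty there is to show $\partial_k\Phi$ keeps a constant sign on the gap, which is exactly what the Feynman–Hellmann reasoning secures conceptually.
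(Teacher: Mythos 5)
Your proposal is correct, but its treatment of the $k$-monotonicity follows a genuinely different route from the paper's. The $\tau$-part is the same in both: an explicit computation of $\partial g_\pm/\partial\tau$, which cannot vanish on the gap because $\sin ka$, $\sin kb$ have no zeros there and $|f_\ga^\tau(k)|>1$. For the $k$-dependence the paper stays elementary: it argues by contradiction that if $g_\pm(\tau;\cdot)$ were not monotone, continuity would produce one value $\wt\ga$ attained at two distinct points $k_1\neq k_2$ of the gap, so that $k_1^2$ and $k_2^2$ would be two distinct eigenvalues of $\H\gaga^{\theta_2}$ in a single gap of $\H_\ga^{\theta_2}$, contradicting the at-most-one-eigenvalue property of the rank-one perturbation; no further machinery is used. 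You instead regard the active branch of $g$ as the inverse of the coupling-to-eigenvalue map $\wt\ga\mapsto E(\wt\ga)$ and prove the latter strictly increasing via Feynman--Hellmann, $E'(\wt\ga)=|\psi(x_0)|^2>0$, with a correct argument that $\psi(x_0)\neq0$. This buys more than the lemma asserts (strictness and the direction of monotonicity) at the cost of heavier input: analyticity and simplicity of the gap eigenvalue for a holomorphic family of type (B), form-boundedness of the vertex trace, etc. Two further observations. First, your identity $g_++g_-=2\gamma$ is a useful piece of bookkeeping that the paper glosses over: on a given gap only the branch of \eqref{proof,lam=lamp,equiv} matching the sign of $f_\ga^\tau$ corresponds to actual eigenvalues, and the reflection identity is what transfers monotonicity to the inactive branch. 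Second, your inversion step has one unstated ingredient: to pass from ``$E$ strictly increasing on its domain'' to monotonicity of the inverse, the domain $\{\wt\ga:\ E(\wt\ga)\in(\lambda_-,\lambda_+)\}$ must be connected --- otherwise strict monotonicity on each component would not rule out a globally non-monotone inverse; this does hold, since that domain is the range of the continuous function $g(\tau;\cdot)$ over the gap interval, but it should be said. Finally, note that the uniqueness of the gap eigenvalue, which you already invoke to define $E(\wt\ga)$, gives injectivity of $g(\tau;\cdot)$ directly, and injectivity plus continuity already yields monotonicity --- this is precisely the paper's argument in contrapositive form, so your Feynman--Hellmann layer is additional information rather than a logical necessity.
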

\begin{proof}	
    We use \emph{reduction ad absurdum:} suppose that there is a $\tau\in [-1,1]$ such that $g_{\pm}(\tau;\cdot)$ is not monotonous. Evidently, the function $g(\tau;\cdot)$ is continuous within $(\sqrt{\lambda_-},\sqrt{\lambda_+})$; it may have singularities only at those  $k$ for which $\sin ka$ or $\sin kb$ vanishes, but there are no such points inside the gaps of $\sigma(\H_\ga)$. The continuity of $g_{\pm}(\tau;k)$ and its assumed non-monotonicity imply
	that there exists a
	$\widetilde\gamma\in\mathbb{R}$ and two distinct points $k_1,k_2\in (\sqrt{\lambda_-},\sqrt{\lambda_+})$ such that
	$$
	\widetilde\gamma=g_{\pm}(\tau;k_1)=g_{\pm}(\tau;k_2).
	$$
	Consequently, the numbers $k_1^2$ and $k_2^2$ are two distinct eigenvalues of the operator $\H^{\theta_2}_{\gamma,\widetilde\gamma}$ lying in $(\lambda_-,\lambda_+)$.
	This is not possible, however, because $\H^{\theta_2}_{\gamma,\widetilde\gamma}$ is obtained from $\H^{\theta_2}_{\gamma}$ by a single $\delta$-coupling strength change, and as observed above, such a perturbation may
    produce no more than a \emph{single} eigenvalue in each gap. In this way, we arrive at a a contradiction.

 To prove the monotonicity with respect to $\tau$, we compute
$$\left|{\partial g_{\pm}\over\partial\tau}(\tau,k)\right|=\frac{2k}{|\sin ka|}
 ((f_\ga^\tau(k) )^2-1)^{-1/2}|f_\ga^\tau(k)|\cdot\left| \frac{\sin ka}{\sin kb}\right|.$$
 Since $(\sqrt{\lambda_-},\sqrt{\lambda_+})$ contains no zeros of
 both ${\sin ka}$ and ${\sin kb}$ and $|f_\ga^\tau(k)|>1$ on $(\lambda_-,\lambda_+)$, we get $\left|{\partial g_{\pm}\over\partial\tau}(\cdot,k)\right|>0$, which give the sought claim.
\end{proof}

Similar result holds for negative gaps.
\begin{lemma}	\label{monoton.neg}
Let $(\lambda_-,\lambda_+){ \subset (-\infty,0)\setminus\sigma(\H_\gamma)}$. Then for each  $\tau\in [-1,1]$, the functions $\kappa\mapsto {  \wh g_{\pm}}(\tau;{\black\kappa})$ in {\black\eqref{neg,gpm}} are monotonous in $(\sqrt{|\lambda_+|},\sqrt{|\lambda_-}|)$,
and, for fixed ${\black\kappa}\in (\sqrt{|\lambda_+|},\sqrt{|\lambda_-}|)$, the functions $[-1,1]\ni\tau\mapsto {\black\wh g_{\pm}(\tau;\kappa)}$ are also monotonous.
\end{lemma}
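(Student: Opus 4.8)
The plan is to mirror the proof of Lemma~\ref{monoton} line by line, substituting the negative-energy objects $\wh g_\pm$, $\wh f_\gamma^\tau$, and $\kappa=|E|^{1/2}$ for their positive-energy counterparts. The crucial structural facts carry over unchanged: the coupling-constant change producing $\H\gaga^{\theta_2}$ from $\H_\ga^{\theta_2}$ is a rank-one perturbation of the resolvent, so by \cite[Sect.~8.3, Cor.~1]{We80} there is \emph{at most one} eigenvalue of $\H\gaga^{\theta_2}$ in each gap of $\sigma(\H_\ga^{\theta_2})$, including the gaps at negative energies. This single-eigenvalue bound is the engine driving the $\kappa$-monotonicity argument, and it applies verbatim to the negative spectrum.

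First I would establish monotonicity in $\kappa$ by \emph{reductio ad absurdum}, exactly as before. Fix $\tau\in[-1,1]$ and suppose $\kappa\mapsto\wh g_\pm(\tau;\kappa)$ fails to be monotonous on $(\sqrt{|\lambda_+|},\sqrt{|\lambda_-|})$. The function is continuous there, since by \eqref{neg,gpm} its only possible singularities occur where $\sinh\kappa a$ or $\sinh\kappa b$ vanishes, and for $\kappa>0$ these hyperbolic sines are strictly positive, so no singularities arise at all (this is in fact cleaner than the trigonometric case, where one must invoke the absence of $\Xi$-points inside a gap). Continuity together with non-monotonicity forces the existence of some $\widetilde\gamma\in\R$ and two distinct points $\kappa_1,\kappa_2\in(\sqrt{|\lambda_+|},\sqrt{|\lambda_-|})$ with $\widetilde\gamma=\wh g_\pm(\tau;\kappa_1)=\wh g_\pm(\tau;\kappa_2)$. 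By Corollary~\ref{cor:per:comb;neg} (read through Remark~\ref{RemarkBords}, where \eqref{neg,gpm} encodes the negative-eigenvalue condition \eqref{sol,gen,neg}), the numbers $-\kappa_1^2$ and $-\kappa_2^2$ are then two distinct eigenvalues of $\H\gaga^{\theta_2}$ in the gap $(\lambda_-,\lambda_+)$, contradicting the at-most-one-eigenvalue property.

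For the $\tau$-monotonicity at fixed $\kappa$, I would differentiate $\wh g_\pm(\tau;\kappa)$ directly, obtaining the hyperbolic analogue of the displayed derivative in Lemma~\ref{monoton}:
\begin{equation*}
\left|\frac{\partial\wh g_\pm}{\partial\tau}(\tau;\kappa)\right|
=\frac{2\kappa}{\sinh\kappa a}\,\bigl((\wh f_\gamma^\tau(\kappa))^2-1\bigr)^{-1/2}\,
|\wh f_\gamma^\tau(\kappa)|\cdot\frac{\sinh\kappa a}{\sinh\kappa b}.
\end{equation*}
On the gap $(\lambda_-,\lambda_+)$ we have $|\wh f_\gamma^\tau(\kappa)|>1$ by Proposition~\ref{prop:per:comb}, so the square-root factor is finite and nonzero; moreover $\sinh\kappa a$ and $\sinh\kappa b$ are strictly positive for $\kappa>0$. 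Hence the derivative is strictly positive throughout, which gives strict monotonicity in $\tau$ and completes the argument.

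I do not anticipate a genuine obstacle here, since the proof is a faithful transcription of Lemma~\ref{monoton}. The only point requiring a moment of care is verifying that the single-eigenvalue-per-gap statement from \cite{We80} is indeed valid for the negative gaps and for the semi-infinite gap below $\inf\sigma(\H_\ga^{\theta_2})$; but the cited corollary covers precisely the gap below the spectral threshold, so this is already guaranteed. The absence of $\Xi$-type singularities (which in the positive case needed the remark that $\sin ka,\sin kb$ do not vanish inside a gap) is automatic in the negative regime because the hyperbolic functions never vanish for positive argument, making the continuity step slightly shorter than its positive-energy analogue.
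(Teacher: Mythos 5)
Your proposal is correct and follows exactly the route the paper intends: the paper states Lemma~\ref{monoton.neg} without proof (``Similar result holds for negative gaps''), and your argument is precisely the hyperbolic transcription of the proof of Lemma~\ref{monoton} — the at-most-one-eigenvalue-per-gap property of the rank-one resolvent perturbation for the $\kappa$-monotonicity, and the explicit derivative bound with $|\wh f_\gamma^\tau(\kappa)|>1$ for the $\tau$-monotonicity. Your additional observation that continuity is even easier here, because $\sinh$ never vanishes for positive argument, is a correct and welcome simplification.
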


\begin{remark}\label{remark.newbands}
The observations we collected in Remark~\ref{RemarkBords},\,\ref{RemarkBordsInf}
and
Lemmata~\ref{monoton},\,\ref{monoton.neg}
lead us to the following conclusions.
To describe the new band that may appear within a   positive gap $(\lambda_-,\lambda_+)$ in the spectrum of  the unperturbed
operator $\H_\gamma$, it is enough to inspect the behavior of the curves $ {\black g_+(1;k)}$ and $g_+(-1;k)$ (respectively, ${\black g_-(1;k)}$ and $g_-(-1;k)$) provided the condition $f_\gamma^\tau(k)<-1$ (respectively, $f_\gamma^\tau(k)>1$) holds within this gap.
Namely, let $\mathscr{D}$ be a set on the half plane $\{(\wt\ga,k):\ k>0\}$ (see Figures~\ref{threefigsRect}--\ref{threefigsRectGolden3})
squeezed between the horizontal lines $k=\sqrt{\lambda_-}$ and $k=\sqrt{\lambda_+}$ and the curves
$\wt\gamma={\black g_+(1;k)}$ and $\wt\gamma=g_+(-1;k)$ (respectively, $\wt\gamma={\black g_-(1;k)}$ and $\wt\gamma=g_-(-1;k)$); on Figures~\ref{threefigsRect}--\ref{threefigsRectGolden3} the set $\mathscr{D}$ corresponds to one of the blue areas (respectively, the red areas).
The remaining curves $\wt\gamma=g_+(\tau;k)$, $k\in (\sqrt{\lambda_-},\sqrt{\lambda_+})$
(respectively, $\wt\gamma=g_-(\tau;k)$, $k\in (\sqrt{\lambda_-},\sqrt{\lambda_+})$)
with $\tau\in (-1,1)$ fills the interior of $\mathscr{D}$.
Then the intersection of the vertical line $\wt\ga=\mathrm{const}.$ and the set $\mathscr{D}$ is either  empty (cf.~Propositions~\ref{proposition1}--\ref{proposition3}) or
it is compact interval which may
or may not touch the endpoints of  $(\sqrt{\lambda_-},\sqrt{\lambda_+})$; the square of this interval constitutes the new band within the gap $(\lambda_-,\lambda_+)$.
Similarly, to describe the new negative bands, we need to examine the behavior of the curves $\wh g_+(\black 1;\kappa)$ and $\wh g_+(-1;\black\kappa)$ (respectively, $\wh g_-(\black 1;\kappa)$ and $\wh g_-(-1;\black\kappa)$) provided   $\black\wh f_\gamma^\tau(\kappa)<-1$ (respectively, $\black\wh f_\gamma^\tau(\kappa)>1$).
\end{remark}

Now, we can formulate the following proposition concerning those situations in which each spectral gap of $\sigma(\H_\ga)$ is affected by the perturbation.

\begin{proposition}\label{proposition4}
	Apart from the situations mentioned in Propositions~\ref{proposition1}-\ref{proposition3}, there is always a new spectral band resulting from the perturbation in each gap of $\sigma(\H_\ga)$; these new bands may or may not touch the edges of the original bands.
\end{proposition}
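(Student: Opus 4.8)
The plan is to locate the new band inside a fixed gap by determining the exact range of the boundary functions from Remark~\ref{RemarkBords} over that gap, and to check that this range is precisely the interval of $\wt\ga$ not already excluded by Propositions~\ref{proposition1}--\ref{proposition3}. Fix a positive gap $(\lambda_-,\lambda_+)$ of $\sigma(\H_\ga)$. Since the interval is connected and a gap satisfies either \eqref{per,gap,con1} or \eqref{per,gap,con2} throughout, the sign of $f_\ga^\tau(k)$ relative to $\pm1$ is constant on it, so exactly one of the functions $g_+$ (when $f_\ga^\tau<-1$) or $g_-$ (when $f_\ga^\tau>1$) from \eqref{proof,lam=lamp,equiv} governs the gap; call it $g_s$. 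By Remark~\ref{remark.newbands} the new band at parameter $\wt\ga$ is the $k\mapsto k^2$ image of $\{k:\ \wt\ga\in[g_s(-1;k),g_s(1;k)]\}$, hence it is nonempty exactly when $\wt\ga$ lies in the range of $(\tau,k)\mapsto g_s(\tau;k)$ on $[-1,1]\times(\sqrt{\lambda_-},\sqrt{\lambda_+})$. I would therefore compute that range.

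The computation rests on two limiting behaviors at the gap edges, which by Remark~\ref{rem:periodic} are of two kinds: each positive gap has one edge in $\Xi$ and one of the ``band-condition'' type. At the band-condition edge one has $|A|=1+|B|$ with $A:=\frac{\ga\sin ka}{2k}+\frac{\sin k(a+b)}{\sin kb}$ and $B:=\frac{\sin ka}{\sin kb}\neq0$, so the extremal choice $\tau_0:=\operatorname{sgn}B\in\{-1,1\}$ gives $f_\ga^{\tau_0}\to\pm1$; the square root in \eqref{proof,lam=lamp,equiv} then vanishes and $g_s(\tau_0;\cdot)\to\ga$, anchoring the range at $\ga$. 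At the $\Xi$-edge, where $\sin ka$ or $\sin kb$ vanishes, the factor $\frac{2k}{\sin ka}\sqrt{(f_\ga^\tau)^2-1}$ is an indeterminate product; a short expansion shows it diverges (at a $\Xi_a$-edge one has $\sin ka\sim\varepsilon$ and $(f_\ga^\tau)^2-1\sim\varepsilon$, giving growth of order $\varepsilon^{-1/2}$, while at a $\Xi_b$-edge $f_\ga^\tau\to\infty$ directly), so $g_s(\tau;\cdot)\to\pm\infty$ for every $\tau$. As $g_s$ is continuous on the connected set $[-1,1]\times(\sqrt{\lambda_-},\sqrt{\lambda_+})$, its range is an interval whose closure contains both $\ga$ and $\pm\infty$; it therefore contains the open semi-infinite interval with endpoint $\ga$, and the intermediate value theorem yields a nonempty band for every $\wt\ga$ in that interval.

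It then remains to pin down the \emph{direction} of the divergence and to handle the negative gaps. The sign of $\frac{2k}{\sin ka}$ on the gap, together with the governing regime, decides whether the range is $(-\infty,\ga)$ or $(\ga,+\infty)$; the edge-type dichotomy of Remark~\ref{rem:periodic} makes this choice uniform across gaps, producing $(-\infty,\ga)$ for $\ga>0$ (the complement of Proposition~\ref{proposition1}) and $(\ga,+\infty)$ for the positive gaps when $\ga<0$ (the complement of Proposition~\ref{proposition2}). For the semi-infinite negative gap below $\inf\sigma(\H_\ga)$ there is no $\Xi$-edge, since $\sinh$ has no positive zeros; here I would extract the divergence from the large-$\kappa$ behaviour of \eqref{neg,gpm}. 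Because $\wh f_\ga^\tau(\kappa)\sim\e^{\kappa a}$ while $\frac{2\kappa}{\sinh\kappa a}\sim 4\kappa\,\e^{-\kappa a}$, one gets $\wh g_-(\tau;\kappa)\sim\ga-4\kappa\to-\infty$, whereas at the finite band edge $\kappa_1$ the anchoring argument gives $\wh g_-(1;\kappa)\to\ga$; thus $\wh g_-$ sweeps $(-\infty,\ga)$, matching the complement of Proposition~\ref{proposition3} and identifying $\wh g_-$ with the gap below the threshold as in Remark~\ref{RemarkBordsInf}. A gap that straddles $E=0$ is treated by concatenating its negative description $\wh g_+$ with the positive one, which agree continuously at $k=\mathrm i\kappa\to0$; the requisite divergence is then supplied by the positive $\Xi$-edge at the top of such a gap.

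The hard part will be precisely this asymptotic analysis at the $\Xi$-edges together with the sign bookkeeping: one must confirm that the indeterminate product truly diverges rather than tending to a large finite value, and that the direction of divergence always points \emph{into} the complement of the Propositions~\ref{proposition1}--\ref{proposition3} regime, uniformly over the cases $\ga>0$ versus $\ga<0$, the two regimes $f_\ga^\tau>1$ and $f_\ga^\tau<-1$, and the two edge types $\Xi_a,\Xi_b$. Finally, the clause ``may or may not touch the edges'' would be read off from the \emph{non}-anchoring curve: at the band-condition edge $g_s(-\tau_0;\cdot)$ tends to a finite value $c\neq\ga$, so the new band reaches that original edge exactly when $\wt\ga$ lies between $\ga$ and $c$, while at the $\Xi$-edge both boundary curves diverge and the band stays strictly inside, never reaching the flat-band positions, in agreement with the captions of Figures~\ref{threefigsRectGolden3} and \ref{threefigssquare}.
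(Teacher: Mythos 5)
Your overall strategy mirrors the paper's own proof: anchor the range of the boundary curves $g_\pm(\pm1;\cdot)$ at $\wt\ga=\ga$ near the band-condition edge, force an escape to infinity at the $\Xi$-edge, and let Propositions~\ref{proposition1}--\ref{proposition3} together with Lemma~\ref{monoton} fix the direction. The genuine gap is exactly the step you postpone to "the hard part" and never carry out: at a $\Xi_a$-edge $k_n=\frac{n\pi}{a}$ with $\sin k_nb\neq0$, the asymptotics $(f_\ga^{\pm1})^2-1\sim\varepsilon$ is \emph{not} automatic. Expanding in $\varepsilon=k-k_n$ gives
\begin{equation*}
\bigl(f_\ga^{\pm1}(k)\bigr)^2-1=\frac{a}{k_n}\,\bigl(\gamma-\gamma_n^\pm\bigr)\,\varepsilon+\mathcal{O}(\varepsilon^2),
\qquad
\gamma_n^\pm=\pm 2k_n\Bigl(\tan\tfrac{k_nb}{2}\Bigr)^{\pm1},
\end{equation*}
so the linear coefficient vanishes precisely when $\gamma=\gamma_n^\pm$; in that degenerate case $(f_\ga^{\pm1})^2-1=\mathcal{O}(\varepsilon^2)$, the product $\frac{2k}{\sin ka}\sqrt{(f_\ga^{\pm1})^2-1}$ stays \emph{bounded}, the range of $g_s$ may be a bounded interval rather than a half-line, and your intermediate-value argument no longer yields a new band for every admissible $\wt\ga$. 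This loophole is why the paper proves Lemma~\ref{lemma:interior}: if $\gamma=\gamma_n^\pm$, then $k_n$ lies in the \emph{interior} of $\sigma(\H_\ga)$ and therefore can never be a gap edge, so the degeneracy is excluded exactly where the divergence is needed. That lemma (with its four-case expansion of $C(k)=1+|B(k)|-|A(k)|$) is the one substantive new idea of the proof, and without it, or an equivalent argument, your proposal is incomplete.

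There is also a concrete error in your edge bookkeeping. Your claim that at a $\Xi$-edge one has $g_s(\tau;\cdot)\to\pm\infty$ \emph{for every} $\tau$ — and hence that the new band "stays strictly inside, never reaching" such edges — fails at $\Xi_b$-edges with $\sin ka\neq0$. Writing $f_\ga^\tau(k)=\frac{\gamma\sin ka}{2k}+\frac{\sin k(a+b)-\tau\sin ka}{\sin kb}$ and letting $k\to\frac{n\pi}{b}$, the numerator tends to $\bigl((-1)^n-\tau\bigr)\sin ka$, so for the extremal value $\tau=(-1)^n$ the singularity cancels and that boundary curve tends to the \emph{finite} limits $\wt\gamma_o$, $\wt\gamma_e$ of \eqref{touchsinkb,odd}--\eqref{touchsinkb,even}; only the other extremal curve diverges. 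Consequently the new band \emph{does} touch such an edge once $\wt\ga$ passes $\wt\gamma_o$ or $\wt\gamma_e$ — this is precisely the mechanism that allows original gaps to close completely in the non-square case (Remark~\ref{remarkGapClose}), which your final paragraph rules out. Relatedly, for $\ga>0$ the semi-infinite gap has no $\Xi$-edge at all (its only finite edge is of band-condition type), so there the divergence must be supplied by the $\kappa\to\infty$ behaviour of $\wh g_-$, not by a "positive $\Xi$-edge at the top" as you state. The existence part of your argument survives these slips, since one divergent boundary curve suffices for the intermediate-value step, but the touching dichotomy you assert is wrong, and the main analytic gap described above must be filled for the proof to stand.
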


 Before to proceed to the proof of the proposition, we
establish the following auxiliary result.

\begin{lemma}\label{lemma:interior}
Let $k_n\ceq \frac{\pi n}{a}$ with $n\in\N$.
Assume that $\sin k_n b\not= 0$, and let
\begin{gather}
\gamma_n^\pm\ceq\pm{2k_n}\left(\tan \frac{k_n b}2\right)^{\pm 1}.
\end{gather}
Then $k_n$ belongs to the interiors of $\sigma(\H_{\ga_n^+})$ and $\sigma(\H_{\ga_n^-})$.
\end{lemma}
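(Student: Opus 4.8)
The plan is to work in the momentum variable $k$ and to read off the position of $k_n$ relative to the band edges of $\sigma(\H_\gamma)$ through the functions $F_\pm$. Since $\sin k_n b\neq 0$, the point $k_n=\pi n/a$ lies in $\Xi_a\setminus\Xi_b$, so it is the only point of the discrete set $\Xi$ in a neighbourhood of itself; on the two adjacent intervals $(k'',k_n)$ and $(k_n,k')$ cut out by the neighbouring points of $\Xi$ the functions $F_\pm$ are smooth and monotonically increasing, and (for $k\notin\Xi$) one has $k^2\in\sigma(\H_\gamma)$ iff $F_-(k)\le\frac{\gamma}{2k}\le F_+(k)$. Writing $m\ceq\lfloor k_n b/\pi\rfloor$ and $\xi\ceq\frac{k_n b}{2}-\frac{\pi}{2}m\in(0,\tfrac{\pi}{2})$, I would first record the one-sided limits from the explicit formulae for $F_\pm$: as $k\to k_n^-$ one has $F_+(k)\to+\infty$ and $F_-(k)\to-\cot\xi$, while as $k\to k_n^+$ one has $F_-(k)\to-\infty$ and $F_+(k)\to\tan\xi$.

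Using the half-angle identities for $\tan\tfrac{k_n b}{2}$ and $\cot\tfrac{k_n b}{2}$ together with the parity of $m$, I would observe that $\{\gamma_n^+,\gamma_n^-\}$ always consists of one \emph{clean} value, for which $\frac{\gamma}{2k_n}=\tan\xi$ and $\gamma>0$, and one \emph{hard} value, for which $\frac{\gamma}{2k_n}=-\cot\xi$ and $\gamma<0$. For the clean value interiority follows from monotonicity alone: on $(k_n,k')$ the increasing function $F_+$ obeys $F_+(k)>\tan\xi=\frac{\gamma}{2k_n}>\frac{\gamma}{2k}$, the last inequality because $\gamma>0$ makes $k\mapsto\frac{\gamma}{2k}$ strictly decreasing, so the upper constraint is strict while $F_-\to-\infty$ makes the lower one trivial; on $(k'',k_n)$ the lower constraint holds because $\frac{\gamma}{2k_n}=\tan\xi>0>-\cot\xi$ and $F_+\to+\infty$. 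Hence $k_n$ sits strictly inside the spectrum on both sides and $k_n^2$ is interior.

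The hard value is where the real work lies and is the step I expect to be the main obstacle. There the curves $k\mapsto\frac{\gamma}{2k}$ (increasing, as $\gamma<0$) and $F_-$ (increasing) meet tangentially at $k_n$ from the left with the \emph{same} monotonicity, so comparing directions of monotonicity no longer settles the sign and one must compare first derivatives. The right side is still easy, since $\frac{\gamma}{2k_n}=-\cot\xi<0<\tan\xi$ and $F_-\to-\infty$. For the left side I would set $D(k)\ceq\frac{\gamma}{2k}-F_-(k)$, note $D(k)\to0$ as $k\to k_n^-$ and that $F_+\to+\infty$ renders the upper constraint automatic there, and thereby reduce the presence of spectrum just below $k_n$ to $D'(k_n^-)<0$. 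Differentiating $F_-$ at $k_n^-$, where the first cotangent has argument $\tfrac{\pi}{2}$ so that $\csc^2\tfrac{\pi}{2}=1$, gives $F_-'(k_n^-)=\frac a2+\frac b2\csc^2\xi$; since $\frac{\gamma}{2k_n}=-\cot\xi$ forces $-\frac{\gamma}{2k_n^2}=\frac{\cot\xi}{k_n}$, the condition $D'(k_n^-)<0$ becomes precisely the inequality $\frac a2+\frac b2\csc^2\xi>\frac{\cot\xi}{k_n}$.

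To establish this last inequality, which is the crux, I would rewrite $\frac1{k_n}=\frac{b}{2\xi+\pi m}$ using $k_n b=2\xi+\pi m$, so that after discarding the positive term $\frac a2$ it suffices to verify $\frac b2\csc^2\xi\ge\frac{b\cot\xi}{2\xi+\pi m}$; clearing denominators, this is $2\xi+\pi m\ge\sin 2\xi$, which holds because $\sin 2\xi\le 2\xi\le 2\xi+\pi m$ (using $\sin x\le x$ for $x\ge0$, $\xi\in(0,\tfrac\pi2)$, and $m\ge0$). The leftover $\frac a2>0$ upgrades this to a strict inequality, so $D'(k_n^-)<0$ and $k_n^2$ is interior for the hard value as well. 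Finally, since $k\mapsto k^2$ is a homeomorphism of $(0,\infty)$, interiority in $k$ transfers to interiority of $E_n=k_n^2$ in $\sigma(\H_{\gamma_n^+})$ and $\sigma(\H_{\gamma_n^-})$, which completes the argument.
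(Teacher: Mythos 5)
Your argument is correct, but it takes a genuinely different route from the paper's. You work in the framework of Section~2, characterizing the spectrum off $\Xi$ by $F_-(k)\le\frac{\gamma}{2k}\le F_+(k)$ (cf.~\eqref{spectral:condition:symmetric}); the point of your proof is that $\gamma_n^\pm$ are precisely the couplings at which $\frac{\gamma}{2k_n}$ equals one of the one-sided limits $\tan\xi=\lim_{k\to k_n^+}F_+(k)$ or $-\cot\xi=\lim_{k\to k_n^-}F_-(k)$, so that the only possible degeneracy is a one-sided tangency of the curves $k\mapsto\frac{\gamma}{2k}$ and $F_-$, which you resolve by the first-derivative comparison $D'(k_n^-)<0$, reduced via $k_nb=2\xi+\pi m$ to $\sin 2\xi\le 2\xi+\pi m$. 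The paper instead works directly with the single band inequality \eqref{per,band,con}: it sets $C(k)=1+|B(k)|-|A(k)|$ and shows $C\ge 0$ near $k_n$ by splitting into four cases according to the signs of $\sin k_nb$ and $\cos k_na$ and computing first- and second-order Taylor expansions of the combinations $1\pm B\pm A$ at $k_n$; notably, the positivity of the quadratic coefficient there rests on the same elementary fact $\sin x<x$ for $x>0$ that underlies your final inequality, so the two proofs share their arithmetic core. What your route buys is a cleaner geometric picture (the band edge curve $F_-$ meets the coupling curve tangentially from the left but rises strictly faster, by $\frac a2+\frac b2\csc^2\xi>\frac{\cot\xi}{k_n}$) and a uniform treatment of $\gamma_n^+$ and $\gamma_n^-$ at once through the parity of $m$, whereas the paper proves the claim for $\gamma_n^+$ and declares $\gamma_n^-$ ``similar''; the paper's expansions, in turn, are more mechanical and yield the explicit quadratic rate at which the band condition margin opens. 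Two cosmetic points: the identities you invoke are the shift identities $\tan(\theta+\tfrac\pi2)=-\cot\theta$ rather than half-angle identities, and you should add one sentence noting that $k_n^2$ itself lies in $\sigma(\H_{\gamma})$ (points of $\Xi$ always satisfy the spectral condition, or simply use closedness of the spectrum), so that spectrum on both punctured sides indeed gives interiority; both are trivial to fix.
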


\begin{proof}
It is enough to proof the lemma for $\gamma_n^+$; for $\gamma_n^-$ the proof
is similar.
We denote:
\begin{gather}
A(k)\ceq \frac{\gamma_n^+ \, \sin ka}{2 k}+\frac{ \sin k(a+b)}{\sin k b},\quad  B(k)\ceq \frac{\sin ka }{\sin kb},\quad C(k)=1+|B(k)|-|A(k)|.
\end{gather}
To prove the sought claim, we have to show (cf.~\eqref{per,band,con}) that
$C(k)\ge 0$ for $k$ belonging to a neighborhood of $k_n$.
Below we consider four cases.

\begin{comment}
\begin{align}\label{mm}
\hspace{-5mm}1-B(k)-A(k)&=
\begin{cases}2+\mathcal{O}(k-k_n),&n\text{ is odd},\\
-\frac{2a}{\sin k_n b} (k-k_n)+\mathcal{O}((k-k_n)^2),&n\text{ is even},
\end{cases}
\\\label{pm}
\hspace{-5mm}1+B(k)-A(k)&=
\begin{cases}2+\mathcal{O}((k-k_n)^2),&n\text{ is odd},\\
\frac{ \pi n a^2(1 + \cos k_n b) +2a^2(k_n b + \sin k_nb)}{4n\pi \cos^2 \frac{k_nb}2 }(k-k_n)^2+\mathcal{O}((k-k_n)^3),&n\text{ is even},
\end{cases}
\\\label{mp}
\hspace{-5mm}1-B(k)+A(k)&=
\begin{cases}
\frac{ \pi n a^2(1 + \cos k_n b) +2a^2(k_n b + \sin k_nb)}{4n\pi \cos^2 \frac{k_nb}2 }(k-k_n)^2+\mathcal{O}((k-k_n)^3),&n\text{ is odd},\\
2+\mathcal{O}((k-k_n)^2),&n\text{ is even},
\end{cases}
\\\label{pp}
\hspace{-5mm}1+B(k)+A(k)&=\begin{cases}-\frac{2a}{\sin k_n b} (k-k_n)+\mathcal{O}((k-k_n)^2) ,&n\text{ is odd},\\
2+\mathcal{O}(k-k_n),&n\text{ is even},
\end{cases}.
\end{align}
\end{comment}

\noindent {\bf Case I:} $\sin k_n b>0$ and $\cos k_n a =1$
(consequently,  $\sin ka$ changes sign at $k_n$ from minus to plus).
In this case
$\black A(k)\to 1$ as $k\to k_n$, whence, for sufficiently small $\delta_1>0$ we have $|A_n(k)|=A_n(k)$ as $|k-k_n|<\delta_1$.
Furthermore, there is $\delta_2\in (0,\pi)$ such that
$\sin k_n b>0$ for $|k-k_n|<\delta_2$;
consequently
$|B_{n}(k)|=-B_{n}(k)$ for $k\in (k_{n}-\delta_2,k_{n})$ and
$|B_{n}(k)|=B_{n}(k)$ for $k\in (k_{n},k_{n}+\delta_2)$.
As a result, we have
\begin{gather}\label{Ck:1}
C(k)=
\begin{cases}
1-B(k)-A(k),&k\in (k_n-\delta,k_n),\\
1+B(k)-A(k),&k\in (k_{n},k_{n}+\delta),
\end{cases}
\end{gather}
where $\delta=\min\{\delta_1,\delta_2\}$.
By simple manipulations
we get the following asymptotic expansions for even $n$ (i.e., for those $n$ implying $\cos k_n a=1$):
\begin{align} \label{mm-even}
1-B(k)-A(k)&=
-\frac{2a}{\sin k_n b} (k-k_n)+\mathcal{O}((k-k_n)^2),
\\ \label{pm-even}
1+B(k)-A(k)&=
\frac{ \pi n a^2(1 + \cos k_n b) +2a^2(k_n b + \sin k_nb)}{4n\pi \cos^2 \frac{k_nb}2 }(k-k_n)^2+\mathcal{O}((k-k_n)^3)
\end{align}
(not that $\cos  \frac{k_nb}2\not= 0$ if $\sin k_nb\not=0$).
Taking into account that $x>\sin x$ as $x> 0$ and $|\cos x|\le 1$,
we immediately  conclude from \eqref{Ck:1}--\eqref{pm-even} that the function
$C(k)$ is non-negative in a neighborhood of $k_n$.

\noindent {\bf Case II:} $\sin k_n b>0$ and $\cos k_n a =-1$
(consequently,  $\sin ka$ changes sign at $k_n$ from plus to minus).
In this case we have
\begin{gather}\label{Ck:2}
C(k)=
\begin{cases}
1+B(k)+A(k),&k\in (k_n-\delta,k_n),\\
1-B(k)+A(k),&k\in (k_{n},k_{n}+\delta).
\end{cases}
\end{gather}
For odd $n$ (i.e., for those $n$ implying $\cos k_n a=-1$) we have:
\begin{align}
\label{pp-odd}
1+B(k)+A(k)&=-\frac{2a}{\sin k_n b} (k-k_n)+\mathcal{O}((k-k_n)^2),
\\
\label{mp-odd}
1-B(k)+A(k)&=
\frac{ \pi n a^2(1 + \cos k_n b) +2a^2(k_n b + \sin k_nb)}{4n\pi \cos^2 \frac{k_nb}2 }(k-k_n)^2+\mathcal{O}((k-k_n)^3).
\end{align}
The expansions \eqref{pp-odd}--\eqref{mp-odd} together with \eqref{Ck:2} imply $C(k)\ge 0$ for $|k-k_n|$ sufficiently small.

\noindent {\bf Case III:} $\sin k_n b<0$ and $\cos k_n a =1$.
Here we have
$C(k)=1+B(k)-A(k)$ for $k\in (k_n-\delta,k_n)$
and $C(k)=1-B(k)-A(k)$ for $k\in (k_{n},k_{n}+\delta)$ and a sufficiently small $\delta$. Using these facts and the expansions \eqref{pm-even} and \eqref{mm-even},
we conclude that $C(k)\ge 0$ in a neighborhood of $k_n$.

\noindent {\bf Case IV:} $\sin k_n b<0$ and $\cos k_n a =-1$.
One has
$C(k)=1-B(k)+A(k)$ for $k\in (k_n-\delta,k_n)$
and $C(k)=1+B(k)+A(k)$ for $k\in (k_{n},k_{n}+\delta)$ and a sufficiently small $\delta$. Then, by the expansions \eqref{mp-odd} and \eqref{pp-odd},
we derive $C(k)\ge 0$ in a neighborhood of $k_n$.
This ends the proof of the lemma.
\end{proof}

\begin{proof}[Proof of Proposition~\ref{proposition4}]
To prove the claim, we inspect the behavior of the expressions $g_{\pm}(\tau;k)$ and ${ \wh g_{\pm}}(\tau;\kappa)$ with $\tau=\pm1$ which, as mentioned above (cf.~Remark~\ref{remark.newbands}), describe the spectral band edges in $S\gaga$\,, at the values of $k$ and $\kappa$ referring to the edges of the original bands of $\sigma(\H_\ga)$. As before, we consider separately the positive and negative spectrum.

\textit{\textbf{Positive spectrum.}} As noted in Remark~\ref{rem:periodic}, the edges of the spectral bands in $\sigma(\H_\ga)$ are characterized by two types of conditions; denoting them by \ref{caseI} and \ref{caseII}, they are:

	\begin{enumerate}[label=\textnormal{(\Roman*)}]
		\setlength{\itemsep}{-3pt}
		\item \label{caseI} The edges that are explicitly given by $\sin ka=0$ and $\sin kb=0$, their energies being $(\frac{n\pi}{a})^2$ or $(\frac{n\pi}{b})^2$ with some $n\in\mathbb{N}$. They are the upper (respectively, lower) edges of the spectral bands for $\gamma>0$ (respectively, $\gamma<0$), cf. Figs.~\ref{threefigsRect}--\ref{threefigsRectGolden3}; recall that the bands referring to $g_{+}(\tau;k)$ and $g_{-}(\tau;k)$ are, respectively, marked in blue and red. We observe two different behavior types of the functions $g_{\pm}(\pm1;\cdot)$ at the mentioned band edges of $\sigma(\H_\ga)$. First, it may happen that both functions $g_{+}(\pm1;\cdot)$ (respectively, $g_{-}(\pm1;\cdot)$) tend asymptotically to $\pm\infty$ as the energy changes, and alternatively, it may happen that only one of the functions $g_{+}(\pm1;\cdot)$ (respectively, $g_{-}(\pm1;\cdot)$) behaves in this way, while the other reaches the edges of the original bands.
		
		It is easy to see that the \emph{first situation} occurs \emph{only} around the edges determined by the condition $\sin ka=0$ (even if $\sin kb$ vanishes simultaneously). To this aim, let us inspect the behavior of the functions $g_{\pm}(\tau;\cdot)$ with $\tau=-1$ and $\tau=1$ as $k\to \frac{n\pi}{a} $, $n\in\mathbb{N}\,$. For the sake of convenience, it suffices to consider only  the absolute value of the second term in $g_{\pm}(\tau;k)$ in \eqref{proof,lam=lamp,equiv}, that is
        % ---------------------------------------------------- %
		\begin{equation}\label{proof,infinity}
			h_{\pm}(k) :=\frac{2k}{\sin ka}\sqrt{ \left( \frac{\gamma \, \sin ka}{2 k}+\frac{ \sin k(a+b)}{\sin k b}\pm \frac{\sin ka }{\sin kb}\right)^2-1}\, ,
		\end{equation}
% ---------------------------------------------------- %
		where the lower and upper signs correspond to $\tau=1$ and $-1$, respectively. For further convenience, let us consider the squared form of the functions $h_{\pm}(k) $; by simple manipulations, we get
% ---------------------------------------------------- %
		\begin{equation}\label{proof,infinity,2}
			h_{\pm}^2(k) = 4k^2\left[ \left(  \frac{\gamma}{2k}\pm (\tan \frac{kb}2)^{\mp1}\right)^2 +2\frac{\cos ka }{\sin ka} \left(  \frac{\gamma}{2k}\pm (\tan \frac{kb}2)^{\mp1}\right)-1 \right].
		\end{equation}
% ---------------------------------------------------- %
If  $\sin kb$ tends to zero as  $k\to \frac{\pi n}a $ (i.e, $\sin \frac{\pi n b}a=0$),
then the expressions $|\frac{\gamma}{2k}\pm (\tan \frac{kb}2)^{\mp1}|$ tend  either to
$\frac{\gamma}{2k}$ or to $\infty$; hence in this case $h_{\pm}^2(k)\to \infty$ as  $k\to \frac{\pi n}a $.
If $\sin \frac{\pi n}b \not=0$, then $\frac{\gamma}{2k}\pm (\tan \frac{kb}2)^{\mp1}$ do not tend to zero
(otherwise, by Lemma~\ref{lemma:interior}, $\frac{\pi n}a$ is an internal point of $\sigma(\H_\gamma)$);
thus, in this case, $h^2_\pm(k)$ tends to infinity too. This, together with the claims of Proposition~\ref{proposition1}-\ref{proposition3} as well as of Lemma~\ref{monoton} concerning the monotonicity of $g_{\pm}(\tau;\cdot)$ in each gap of $\sigma(\H_\ga)$, implies that approaching the values $k=\frac{n\pi}{a}$, the expressions $g_{\pm}(\tau;k)$ tend asymptotically to plus and minus infinity for $\gamma<0$ and $\gamma>0$, respectively, cf. Figs.~\ref{threefigsRect}--\ref{threefigsRectGolden3}.

The \emph{second situation} happens \emph{only} at those band edges of $\sigma(\H_\ga)$ given by the condition $\sin kb=0$ while $\sin ka\neq0$; needless to say, the case $\sin ka=0$ belongs to the first situation described above. To show this, denote $ k_o:=\frac{(2n-1)\pi}{b}$ and $ k_e:=\frac{2 n\pi}{b}$ with $n\in\mathbb{N}$, where the subscripts $o$ and $e$ stand for $odd$ and $even$, respectively; then we get easily
% ---------------------------------------------------- %		
		\begin{equation}\label{touchsinkb,odd}
			\begin{aligned}
			&	\lim_{k\to k_o }   g_{\pm}(1;k)=\pm \text{sgn}\left( \csc k_o a \right)  \infty\,,\\[6pt]
			&	\lim_{k\to k_o}   g_{\pm}(-1;k)=\gamma\pm\frac{\csc  k_o a }b\sqrt{\rho } =:\widetilde{\gamma}_o\,,
			\end{aligned}
		\end{equation}
and		
		\begin{equation}\label{touchsinkb,even}
			\begin{aligned}
			&	\lim_{k\to k_e}   g_{\pm}(-1;k)=\pm \text{sgn}\left( \csc k_e a \right)  \infty    \,,\\[6pt]
			&	\lim_{k\to  k_e}   g_{\pm}(1;k)=\gamma\pm\frac{\csc k_e a}b\sqrt{\sigma}=:\widetilde{\gamma}_e     \,,
			\end{aligned}
		\end{equation}
where
		\begin{align*}
			&	\rho:=(b \gamma   \sin  k_o a)^2+ ((4 n-2) \pi  \cos k_o a)^2 +(4 n-2) \left(b \gamma \, {\sin k_o a} -(4 n-2)\pi\right)\pi \,, \\
			&   \sigma:=(b \gamma   \sin  k_e a)^2+(4 n\pi   \cos  k_e a)^2+4 n\pi \left(b \gamma  \sin k_e a-4 \pi  n\right).
		\end{align*}
% ---------------------------------------------------- %
Naturally, the signs $\pm$ in Eqs.~\eqref{touchsinkb,odd} and \eqref{touchsinkb,even} correspond mutually at both sides. Consequently, taking into account the claims of Propositions~\ref{proposition1}-\ref{proposition3} as well as of Lemma~\ref{monoton}, we conclude that one of the functions $g_{+}(\pm1;\cdot)$, and similarly one of the functions $g_{-}(\pm1;\cdot)$, tends asymptotically around the points $ k=\frac{n\pi}{b}$, $n\in\mathbb{N}$, to $+\infty$ (respectively, $-\infty$) if $\gamma<0$ (respectively, $\gamma>0$), while the other reaches the finite value $\widetilde{\gamma}_o$ or $\widetilde{\gamma}_e $. Furthermore, having in mind that $\widetilde{\gamma}=g_{\pm}(\tau;k)$, this implies that at $k=\frac{n\pi}{b}$, if $\gamma>0$ (respectively, $\gamma<0$), the lower (respectively, upper) edges of the new spectral bands of $S\gaga$ always touch the upper (respectively, lower) edges of the original bands provided $\widetilde{\gamma}<\widetilde{\gamma}_o$ or $\widetilde{\gamma}<\widetilde{\gamma}_e$ (respectively, $\widetilde{\gamma}>\widetilde{\gamma}_o$ or $\widetilde{\gamma}>\widetilde{\gamma}_e$), cf. Figs.~\ref{threefigsRect}--\ref{threefigsRectGolden3}.

		\medskip

\item \label{caseII} The band edges that are characterized by the conditions
% ---------------------------------------------------- %
		\begin{equation}\label{class2}
			\frac{\gamma \, \sin ka}{2 k}+\frac{ \sin k(a+b)}{\sin k b}=\pm 1 \pm \left\lvert \frac{\sin ka }{\sin kb}\right\rvert ,
            \end{equation}
% ---------------------------------------------------- %
		in which $\sin ka$ and $\sin kb$ are nonzero; more precisely, they correspond to the lower (respectively, upper) edges of the spectral bands of $\sigma(\H_\ga)$ for $\gamma>0$ (respectively, $\gamma<0$). As can be seen in Figs.~\ref{threefigsRect}--\ref{threefigsRectGolden3}, in such a case, one of the functions $g_{+}(\pm1;\cdot)$, and similarly one of the functions $g_{-}(\pm1;\cdot)$, touches the original bands at $\widetilde{\gamma}=\gamma$ while the other touches them at $\widetilde{\gamma}>\gamma$ (respectively, $\widetilde{\gamma}<\gamma$) if $\gamma<0$ (respectively, $\gamma>0$).

		To show that, we mimick the argument used for type~\ref{caseI} and inspect the behavior of the functions $g_{\pm}(\tau;\cdot)$ with $\tau=\pm1$, at the values of $k$ satisfying the original band edge conditions \eqref{class2}.
        To this aim, recall first that by Remark~\ref{RemarkBords}, $g_{+}(\tau;k)$ and $g_{-}(\tau;k)$ are, respectively, associated with the constraints $f_\ga^\tau(k)<-1$ and $f_\ga^\tau(k)>1$. This, together with simple manipulations, brings us to the conclusion that the minus and plus signs in $g_{\pm}(\tau;k)$ in \eqref{proof,lam=lamp,equiv} are fulfilled \emph{only} with the upper and lower signs in \eqref{class2}, respectively, i.e., if the gap is characterized by $\black f_\ga^\tau(k)>1$ (respectively, $f_\ga^\tau(k)<-1$), then the equality \eqref{class2} with plus sign (respectively, minus sign) holds on the band edge). Substituting then from \eqref{class2} into \eqref{proof,lam=lamp,equiv} and paying a proper attention to the signs of the respective conditions, we arrive at
% ---------------------------------------------------- %
		\begin{equation}\label{simp,lam=lamp,rec,sub}
			\widetilde{\gamma} =\gamma\pm\frac{2k}{\sin ka}\sqrt{ \left( \mp 1 \mp \left\lvert \frac{\sin ka }{\sin kb}\right\rvert-\tau\, \frac{\sin ka }{\sin kb}\right)^2-1}=:g^{ \rm e}_{\pm}(\tau;k)\, ,
		\end{equation}
% ---------------------------------------------------- %
in which we have used the superscript $\rm e$ for $g_{\pm}(\tau;k)$ to emphasize that they correspond to the functions values at the original band edges characterized by \eqref{class2}. Now, depending on the sign of the function $  \frac{\sin ka }{\sin kb} \,$, we get
% ---------------------------------------------------- %
	        \begin{equation}\label{secondii2}
			\begin{aligned}
				&	g^{ \rm e}_{\pm}(-1;k)=\gamma\,, \quad\quad
				g^{ \rm e}_{\pm}(1;k)=\gamma \pm \frac{2k}{\sin ka}\sqrt{\left(1\pm 2\frac{\sin ka }{\sin kb}\right)^2-1}=:\widetilde{\gamma}^{p\pm}_1\,, \\
				& g^{ \rm e}_{\mp}(1;k)=\gamma\,, \quad\quad
				g^{ \rm e}_{\mp}(-1;k)=\gamma \mp \frac{2k}{\sin ka}\sqrt{\left(1\pm 2\frac{\sin ka }{\sin kb}\right)^2-1}=:\widetilde{\gamma}^{p\mp}_2\,,		
			\end{aligned}
		\end{equation}
% ---------------------------------------------------- %
		where the upper and lower signs correspond, respectively, to the plus and minus sign of $ \frac{\sin ka }{\sin kb} $. This confirms the observations that one of the functions $g_{+}(\pm1;\cdot)$, and similarly one of the functions $g_{-}(\pm1;\cdot)$, always touches the edges of the original bands at $\widetilde{\gamma}=\gamma$ while the other touches them at $\widetilde{\gamma}=\widetilde{\gamma}^{p\pm}_1$ or $\widetilde{\gamma}=\widetilde{\gamma}^{p\mp}_2$. Hence, in accordance with the claims of Propositions~\ref{proposition1}-\ref{proposition3} and Lemma~\ref{monoton}, we conclude that for $\gamma>0$ (respectively, $\gamma<0$), the upper (respectively, lower) edges of the new spectral bands do not touch the lower (respectively, upper) edges of the original bands if $\widetilde{\gamma}< \widetilde{\gamma}^{p\pm}_1 $ or  $\widetilde{\gamma}< \widetilde{\gamma}^{p\mp}_2 $ (respectively, $\widetilde{\gamma}>\widetilde{\gamma}^{p\pm}_1$ or $\widetilde{\gamma}>\widetilde{\gamma}^{p\mp}_2 $), while on the other hand, if $\widetilde{\gamma}^{p\pm}_1\leq\widetilde{\gamma}\leq\gamma $ or $\widetilde{\gamma}^{p\mp}_2\leq\widetilde{\gamma}\leq\gamma $ (respectively, $\gamma\leq\widetilde{\gamma}\leq\widetilde{\gamma}^{p\pm}_1 $ or $\gamma\leq\widetilde{\gamma}\leq\widetilde{\gamma}^{p\mp}_2 $) they do touch, cf. Figs.~\ref{threefigsRect}--\ref{threefigsRectGolden3}.
	\end{enumerate}

 Now, let us pass to the negative part of the spectrum.	
	
	\textit{\textbf{Negative spectrum.}} It can be dealt with in a similar way, by inspecting the behavior of the functions $\kappa\mapsto {\black \wh g}_{\pm}(\tau;\kappa)$ with $\tau=\pm1$ at the values of $\kappa$ referring to the edges of the single negative band in $\sigma(\H_\ga)$. The latter, from \eqref{per,neg,band,con}, or equivalently from \eqref{class2} with $k$ replaced by $i\kappa$, is given by the conditions
% ---------------------------------------------------- %
	\begin{equation}\label{IInegBand}
		\frac{\gamma \, \sinh \kappa a}{2 \kappa}+\frac{ \sinh \kappa(a+b)}{\sinh \kappa b}=\pm 1 \pm  \frac{\sinh \kappa a }{\sinh \kappa b} ,
	\end{equation}
% ---------------------------------------------------- %
	in which the upper and lower signs correspond, respectively, to the lower and upper edges of the original negative band. Mimicking the argument used above for the type~\ref{caseII} of the positive spectrum, or simply by replacing $k$ by $i\kappa$ in \eqref{secondii2} with the upper signs only since $\frac{\sinh \kappa a }{\sinh \kappa b}>0$, we get
% ---------------------------------------------------- %
	\begin{equation}\label{secondii,neg}
		\begin{aligned}
			&{\black \wh g}^ {\;\rm e}_{+}(-1;\kappa )=\gamma\,, \qquad\quad
			{\black \wh g}^{ \;\rm e}_{+}(1;\kappa )=\gamma + \frac{2\kappa }{\sinh \kappa a}\sqrt{\left(1+ 2\frac{\sinh \kappa a }{\sinh \kappa b}\right)^2-1}=:\widetilde{\gamma}^n_1\,, \\
			& {\black \wh g}^{ \;\rm e}_{-}(1;\kappa )=\gamma\,, \qquad\quad
			{\black \wh g}^{ \;\rm e}_{-}(-1;\kappa )=\gamma - \frac{2\kappa }{\sinh \kappa a}\sqrt{\left(1+ 2\frac{\sinh \kappa a }{\sinh \kappa b}\right)^2-1}=:\widetilde{\gamma}^n_2\,,		
		\end{aligned}
	\end{equation}
% ---------------------------------------------------- %
where the superscript $ \rm e$ is used again to emphasize that we deal with the functions values at the original band edges characterized by \eqref{IInegBand}. Note that from \eqref{secondii,neg} we easily infer that $\widetilde{\gamma}^n_1>\gamma$ and $\widetilde{\gamma}^n_2<\gamma$ because $\kappa$ and $\sinh \kappa a$ are positive.
	
	On the other hand, one easily checks that in the limit $\kappa\to\infty$, the function $\kappa\mapsto {\black \wh g}_{-}(\tau;\kappa)$ in \eqref{neg,gpm} tends to $-\infty$; recall that the functions ${\black \wh g}_{-}(\tau;\kappa)$ and ${\black \wh g}_{+}(\tau;\kappa)$ correspond, respectively, to the negative eigenvalues of $S\gaga$ in the gaps below and above $\inf\sigma(\H_{\gamma})$, see Remark~\ref{RemarkBordsInf}.
	
	Therefore, in accordance with the Proposition~\ref{proposition1}-\ref{proposition3} as well as Lemma~\ref{monoton}, we conclude that as $\widetilde{\gamma}$ increases from $-\infty$ to $\infty$, the function $\kappa\mapsto {\black \wh g}_{-}(-1;\kappa)$ (respectively, $\kappa\mapsto {\black \wh g}_{-}(1;\kappa)$) grows with respect to $-\kappa^2$ from $-\infty$ to the lower edge of the original negative band and touches it at $\widetilde{\gamma}=\widetilde{\gamma}^n_2$ (respectively, $\widetilde{\gamma}=\gamma$). Similarly, the function $\kappa\mapsto {\black \wh g}_{+}(-1;\kappa)$ (respectively, $\kappa\mapsto {\black \wh g}_{+}(1;\kappa)$) touches the upper edge of the original negative band at $\widetilde{\gamma}=\gamma$ (respectively, $\widetilde{\gamma}=\widetilde{\gamma}^n_1$), and then increases with respect to $-\kappa^2$. As a result, the perturbation-induced negative bands do not touch the lower (respectively, upper) edge of the orginal band if $\widetilde{\gamma}< \widetilde{\gamma}^n_2 $ (respectively, $\widetilde{\gamma}> \widetilde{\gamma}^n_1$). This behavior is illustrated in Figs.~\ref{threefigsRect}-- \ref{threefigsRectGolden3}.

Summing up the discussion of the positive and negative spectrum and using Lemma~\ref{monoton} about the monotonicity of the functions $k\mapsto g_{\pm}(\tau;k)$ and $\kappa\mapsto {\black \wh g}_{\pm}(\tau;\kappa)$ in each gap of $\sigma(\H_\ga)$, we have obtained the sought claim.
\end{proof}

\begin{remark}\label{remarkGapClose}
	Using the results about the positive spectrum contained in Proposition~\ref{proposition4} and Lemma~\ref{monoton}, we infer that some of the original gaps of $\sigma(\H_\ga)$  may get completely closed by the perturbation, cf. Figs.~\ref{threefigsRect}--\ref{threefigsRectGolden3}. Such a situation occurs if $\widetilde{\gamma}_o$ or $\widetilde{\gamma}_e $ is greater (respectively, less) than $\widetilde{\gamma}^{p\pm}_1$ or $\widetilde{\gamma}^{p\mp}_2$ for $\gamma>0$ (respectively, $\gamma<0$). As we mention in the next remark, however, there is an exception to these conclusions.
\end{remark}

\begin{remark}\label{remarkGapCloseSq}
	The exception concerns the particular case of a \emph{square lattice}, $a=b$. The reason is that in contrast to a non-square lattice, the second situation of type \ref{caseI} in Proposition~\ref{proposition4} which gave rise to a gap closure for a general rectangular lattices, leading to the finite values $\widetilde{\gamma}_o$ and $\widetilde{\gamma}_e$ for $g_{\pm}(\tau;k)$ in the limit in Eqs.~\ref{touchsinkb,odd} and \ref{touchsinkb,even} mentioned above,
does not exist anymore. Consequently, \emph{none} of the original gaps in $\sigma(\H_\ga)$ can be closed completely by the perturbation we consider, cf. Fig. \ref{threefigssquare}.	
\end{remark}

To conclude this, admittedly a bit technical discussion, let us summarize the main results of this section:

\begin{theorem}\label{thmGen}
	The spectrum $\sigma(\H_\ga)$ of the periodic rectangular lattice changes under the considered perturbation as follows:
	\begin{enumerate}[label=\textnormal{(\roman*)}]
		\setlength{\itemsep}{-3pt}
		\item \label{th1} If $\gamma<0$ and $\widetilde{\gamma}<\gamma$, there is no new spectral band in original gaps lying above $\inf\,\sigma(\H_{\gamma})$, but there is always a new negative band in the semi-infinite gap below $\inf\,\sigma(\H_{\gamma})$ which may or may not touch the lower edge of the original negative band.
% ---------------------------------------------------- %
		\item \label{th2} If $\gamma<0$ and $\widetilde{\gamma}>\gamma$,  there is no new negative band below $\inf\,\sigma(\H_{\gamma})$, but in each original gap lying above $\inf\,\sigma(\H_{\gamma})$ there is a new spectral band which may or may not touch the edges of the original bands. In the non-square lattice case, $a\neq b$, it may happen that some of the original gaps get completely closed under the perturbation.
% ---------------------------------------------------- %
		\item \label{th3} If $\gamma>0$ and $\widetilde{\gamma}<\gamma$, in each original gap of $\sigma(\H_\ga)$ there is a new spectral band which may or may not touch the edges of the original bands. Again, in the non-square lattice case it may happen that some of the original gaps get completely closed under the perturbation.
% ---------------------------------------------------- %
		\item \label{th4} If $\gamma>0$ and $\widetilde{\gamma}>\gamma$, there is no new spectral band in original gaps of $\sigma(\H_\ga)$; the spectrum remains unchanged under the perturbation.
	\end{enumerate}
\end{theorem}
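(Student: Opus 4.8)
**The plan is to assemble Theorem~\ref{thmGen} as a direct corollary of the propositions and lemmata already established, organizing the four cases (\ref{th1})--(\ref{th4}) according to the signs of $\gamma$ and the difference $\widetilde\gamma-\gamma$.** The heavy analytic lifting has been done; what remains is bookkeeping. For each of the four sign regimes I would separately account for (a) the positive spectral gaps, (b) the bounded negative gap above $\inf\sigma(\H_\gamma)$, and (c) the semi-infinite negative gap below $\inf\sigma(\H_\gamma)$, invoking Propositions~\ref{proposition1}--\ref{proposition3} whenever a gap stays \emph{closed} and Proposition~\ref{proposition4} whenever a new band appears.

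First I would dispatch case~(\ref{th4}), $\gamma>0$, $\widetilde\gamma>\gamma$, which is immediate from Proposition~\ref{proposition1}: all positive gaps remain unchanged, and since $\gamma>0$ forces the negative spectrum of $\H_\gamma$ to be empty (Proposition~\ref{prop:spec:negative}), there is no negative band to perturb. Next, case~(\ref{th1}), $\gamma<0$, $\widetilde\gamma<\gamma$: the positive gaps are untouched by Proposition~\ref{proposition1} (its positive-spectrum argument covers $\widetilde\gamma<\gamma<0$ as well), the bounded gap above $\inf\sigma(\H_\gamma)$ stays closed by Proposition~\ref{proposition2}, while Proposition~\ref{proposition4} produces the new band in the semi-infinite gap below the threshold, described concretely by $\widehat g_{-}(\tau;\kappa)$ marching toward $-\infty$ as $\widetilde\gamma$ decreases. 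The touching/non-touching dichotomy at the lower edge of the original negative band follows from comparing $\widetilde\gamma$ with $\widetilde\gamma^n_2$ as in \eqref{secondii,neg}.

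The two remaining cases are where each gap acquires a band. For case~(\ref{th2}), $\gamma<0$, $\widetilde\gamma>\gamma$: Proposition~\ref{proposition3} keeps the semi-infinite gap closed, while Proposition~\ref{proposition4} supplies a new band in every gap above $\inf\sigma(\H_\gamma)$, with the edge-touching behavior read off from the type~\ref{caseI} analysis (finite limits $\widetilde\gamma_o,\widetilde\gamma_e$ at $\sin kb=0$) and type~\ref{caseII} analysis ($\widetilde\gamma^{p\pm}_1,\widetilde\gamma^{p\mp}_2$). The possibility of complete gap closure for $a\neq b$, together with its failure for the square lattice, is exactly the content of Remarks~\ref{remarkGapClose} and~\ref{remarkGapCloseSq}. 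Case~(\ref{th3}), $\gamma>0$, $\widetilde\gamma<\gamma$, is handled symmetrically: Proposition~\ref{proposition4} again gives a band in each gap, and since $\gamma>0$ leaves no negative spectrum, only positive gaps enter the discussion.

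\textbf{The main obstacle is not difficulty but consistency of orientation:} I must be scrupulous about which sign of $\tau$ and which branch $g_{+}$ versus $g_{-}$ (equivalently $f_\gamma^\tau<-1$ versus $>1$) governs the upper versus lower band edge in each regime, since the roles of ``upper'' and ``lower'' swap between $\gamma>0$ and $\gamma<0$ (see the reformulation of properties (vi)--(vii) in Remark~\ref{rem:periodic}). The cleanest way to avoid sign errors is to phrase each case by first fixing the sign of $\gamma$, thereby fixing via Remark~\ref{rem:periodic} which edges are of type~\ref{caseI} and which of type~\ref{caseII}, and only then invoking the monotonicity in both $k$ and $\tau$ from Lemma~\ref{monoton} (respectively Lemma~\ref{monoton.neg}) to conclude that the vertical slice $\widetilde\gamma=\text{const}$ through the region $\mathscr D$ of Remark~\ref{remark.newbands} is a single compact interval---hence a single band. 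Thus the proof reduces to a short verification, case by case, that the hypotheses of the cited results are met, and the statement follows.
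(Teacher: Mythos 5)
Your overall route is the same as the paper's: Theorem~\ref{thmGen} is indeed presented there as a summary, and its proof is precisely the assembly you describe --- Propositions~\ref{proposition1}--\ref{proposition3} for the unchanged gaps, Proposition~\ref{proposition4} for the new bands, with the touching behavior read off from the quantities $\widetilde{\gamma}_o,\widetilde{\gamma}_e,\widetilde{\gamma}^{p\pm}_1,\widetilde{\gamma}^{p\mp}_2,\widetilde{\gamma}^n_1,\widetilde{\gamma}^n_2$ and the closure statements from Remarks~\ref{remarkGapClose} and~\ref{remarkGapCloseSq}. Your bookkeeping for cases~\ref{th1} and~\ref{th2} matches the paper exactly, including the observation that the positive-spectrum argument of Proposition~\ref{proposition1} already covers $\wt\ga<\ga<0$.

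However, your treatment of negative energies when $\gamma>0$ contains a genuine logical flaw. In case~\ref{th4} you dispatch the negative half-line by arguing that since $\gamma>0$ forces $\sigma(\H_\ga)\cap(-\infty,0]=\emptyset$ (Proposition~\ref{prop:spec:negative}), ``there is no negative band to perturb.'' This is a non sequitur: the absence of negative spectrum of $\H_\ga$ does not prevent the perturbation from creating a new band at negative energies inside the semi-infinite gap $(-\infty,\inf\sigma(\H_\ga))$ --- that is exactly what happens in case~\ref{th3}, where for $\wt\ga$ sufficiently far below $\gamma>0$ the new band in the semi-infinite gap dips below zero, cf.\ Fig.~\ref{threefigsRect}. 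The correct argument, which is the negative-spectrum part of the proof of Proposition~\ref{proposition1}, runs differently: for $\wt\ga>\ga$ only the band condition \eqref{Pert,BandCon1,neg} can hold, it is compatible only with the gap condition \eqref{per,neg,gap,con1}, and \eqref{per,neg,gap,con1} describes the gap \emph{above} $\inf\sigma(\H_\ga)$, which is strictly positive for $\gamma>0$ --- a contradiction. For the same reason your remark in case~\ref{th3} that ``only positive gaps enter the discussion'' is inaccurate: the semi-infinite gap is an original gap extending to $-\infty$, and tracking the new band inside it requires the negative-energy functions $\wh g_{\pm}(\tau;\kappa)$; the claim is covered by Proposition~\ref{proposition4} as you cite it, but not by the reason you give. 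Both defects are repairable simply by invoking Propositions~\ref{proposition1} and~\ref{proposition4} in their full scope (they are stated for \emph{all} gaps, not only the positive ones), so the assembly stands; but as written, these two steps would fail.
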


%%%%%%%%%%%%%%%%%%%%%%%%%%%%%%%%%%%%%%%%%%%%%%%

\section{High-energy regime}
\label{high,en,section}
Let us now finally take a look at the asymptotic behavior of the spectrum in the high-energy regime, $k\to\infty$. For the unperturbed operator $\H_\ga$, the band condition \eqref{per,band,con} reduces to
% ---------------------------------------------------- %
\begin{equation}\label{per,band,pos,high}
	\left\lvert   \frac{ \sin k(a+b)}{\sin k b} \right\rvert \leq  1+\left\lvert \frac{\sin ka }{\sin kb}\right\rvert ,
\end{equation}
% ---------------------------------------------------- %
up to a relative error of order $\mathcal{O}(k^{-1})$. The latter is responsible for the existence of gaps; it is easy to check that \eqref{per,band,pos,high} is always satisfied. This shows that the spectrum of $\H_\ga$ is dominated by bands. More specifically, referring to \cite[claim (a6) in Sec.III.A]{EG96}, we know that in the generic case (cf.~Remark~\ref{rem:Bethe-Sommerfeld}) their number is infinite and their widths are asymptotically bounded by $2|\gamma|(a+b)^{-1}+ \mathcal{O}(n^{-1})$, where $n\to\infty$ is the gap index. This also means that the probability that a randomly chosen energy belongs to the spectral bands, introduced by Band and Berkolaiko \cite{BB13} as
% ---------------------------------------------------- %
\begin{equation}\label{probsigma}
	P_{\sigma}(\H_\ga):=\lim_{K\to\infty} \frac{1}{K}\left|\sigma(\H_\ga)\cap[0,K]\right|,
\end{equation}
% ---------------------------------------------------- %
is equal to one.

What is more interesting is the `size' of the set $S\gaga$ representing the states transported along the line supporting the perturbation, even for energies contained in the unperturbed spectral bands. We are going to show that this set is significant, even if the set-theoretical addition to the spectrum due to the perturbation discussed in the previous section is globally negligible, $P_{\sigma}(\H\gaga)=P_{\sigma}(\H_\ga)=1$.

\begin{proposition}\label{prob1half}
The probability that a randomly chosen number $k^2 $ belongs to the set $S\gaga$, given by \eqref{probsigma} with $\sigma(\H_\ga)$ replaced by $S\gaga$, is equal to $\frac12$.
\end{proposition}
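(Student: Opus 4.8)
The plan is to evaluate the density \eqref{probsigma} (with $\sigma(\H_\ga)$ replaced by $S\gaga$) by reducing the membership conditions \eqref{Pert,BandCon1}--\eqref{Pert,BandCon2} to their high-energy form and then exploiting an exact measure-preserving symmetry. Since the defining average only sees energies in $[0,K]$ and the negative part of $S\gaga$ is bounded for fixed $\ga,\wt\ga$, only the positive spectrum matters. Passing to the momentum via $E=k^2$ I would write
\begin{equation*}
	P_\sigma(S\gaga)=\lim_{K\to\infty}\frac1K\int_0^{\sqrt K}2k\,\mathbf 1_{S}(k)\,\d k,
\end{equation*}
where $\mathbf 1_S(k)=1$ iff $k^2\in S\gaga$. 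First I would show that $\mathbf 1_S(k)$ may be replaced, without changing the limit, by an indicator depending only on the angles $x\ceq ka\bmod 2\pi$ and $y\ceq kb\bmod 2\pi$: in \eqref{BCpmk} the terms $\tfrac{\ga\sin ka}{2k}$ and $(\wt\ga-\ga)\tfrac{\sin ka}{2k}$ are $O(k^{-1})$, so the square root tends to $1$ and the conditions converge to a limiting region $S_\infty\subset\mathbb T^2$. The set where the exact and limiting conditions disagree lies in an $O(k^{-1})$ neighbourhood of the smooth boundary $\partial S_\infty$, hence has $k$-measure $O(k^{-1})$ per unit length; weighted by $2k$ and divided by $K$ it contributes $O(K^{-1/2})\to 0$.

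Next I would identify $S_\infty$ explicitly. In the limit the two conditions \eqref{Pert,BandCon1}--\eqref{Pert,BandCon2} become $\bigl|\tfrac{\sin(x+y)}{\sin y}\pm 1\bigr|\le\bigl|\tfrac{\sin x}{\sin y}\bigr|$, and using $\sin(x+y)\pm\sin y=2\sin(\tfrac x2+y)\cos\tfrac x2$ (resp.\ $2\cos(\tfrac x2+y)\sin\tfrac x2$) together with $|\sin x|=2|\sin\tfrac x2\cos\tfrac x2|$ these reduce to
\begin{equation*}
	\bigl|\sin(\tfrac x2+y)\bigr|\le\bigl|\sin\tfrac x2\bigr|\quad(\text{$+$ root}),\qquad
	\bigl|\cos(\tfrac x2+y)\bigr|\le\bigl|\cos\tfrac x2\bigr|\quad(\text{$-$ root}).
\end{equation*}
By Proposition~\ref{cor:explicitBands} the $+$ root is selected exactly when $\mathcal G(k)=(\wt\ga-\ga)\tfrac{\sin ka}{2k}>0$ and the $-$ root when $\mathcal G(k)<0$; for $k>0$ this means the selector is $s\,\mathrm{sgn}(\sin x)$ with $s\ceq\mathrm{sgn}(\wt\ga-\ga)$. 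Denote the resulting region by $S_\infty=S_\infty^{(s)}$.

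The crucial observation is that the reflection $R:(x,y)\mapsto(-x,-y)$ of $\mathbb T^2$ maps $S_\infty^{(s)}$ onto its complement up to a null set. Indeed $\tfrac{\sin(x+y)}{\sin y}$ and $\tfrac{\sin x}{\sin y}$ are invariant under $R$ while $\mathrm{sgn}(\sin x)$ changes sign, so $R$ flips the selector $s\mapsto-s$ and hence sends $S_\infty^{(s)}$ to $S_\infty^{(-s)}$; on the other hand, for each fixed value of $\mathrm{sgn}(\sin x)$ the two eligible conditions above are complementary a.e., since $|\sin\theta|\le|\sin\tfrac x2|\iff|\cos\theta|\ge|\cos\tfrac x2|$ for $\theta=\tfrac x2+y$. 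Thus $S_\infty^{(s)}$ and $S_\infty^{(-s)}$ partition $\mathbb T^2$ a.e., whence $\mathbf 1_{S_\infty}\circ R=1-\mathbf 1_{S_\infty}$.

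Finally I would evaluate the limit by equidistribution and this symmetry. If $a/b\notin\mathbb Q$, the orbit $k\mapsto(x,y)$ is equidistributed on $\mathbb T^2$ (Weyl), the $2k$-weighted average converges to $(2\pi)^{-2}\int_{\mathbb T^2}\mathbf 1_{S_\infty}$, and since $R$ preserves Lebesgue measure and complements $S_\infty$ this integral equals $\tfrac12$. If $a/b=p/q\in\mathbb Q$, the orbit is the closed geodesic through the origin with some period $T$, and the substitution $k\mapsto T-k$ realises $R$ on positive momenta because $(T-k)a\equiv-ka$ and $(T-k)b\equiv-kb\pmod{2\pi}$; hence $\int_0^T\mathbf 1_{S_\infty}\,\d k=\int_0^T(1-\mathbf 1_{S_\infty})\,\d k$ and the geodesic average is again $\tfrac12$. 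I expect the main obstacle to be the analytic part of the first step, namely controlling the high-energy reduction uniformly near the exceptional set $\Xi$, where $\sin ka$ or $\sin kb$ vanish and the band functions are singular, and justifying that the $2k$-weighted orbit average converges to the plain (torus or geodesic) average; the complementation lemma of the third step is what forces the value to be exactly $\tfrac12$ rather than merely some number in $(0,1)$.
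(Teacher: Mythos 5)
Your proposal is correct and shares the skeleton of the paper's own proof: the same high-energy reduction of \eqref{Pert,BandCon1}--\eqref{Pert,BandCon2} to the half-angle conditions $|\sin(\tfrac x2+y)|\le|\sin\tfrac x2|$ and $|\cos(\tfrac x2+y)|\le|\cos\tfrac x2|$ paired with the sign selector $\mathrm{sgn}\bigl((\wt\ga-\ga)\sin ka\bigr)$, and the same complementarity observation $|\sin\theta|\le|\sin\phi|\iff|\cos\theta|\ge|\cos\phi|$. Where you genuinely depart from the paper is the final counting step. The paper splits into cases: for incommensurate $a,b$ it invokes equidistribution to treat the sign of $\sin ka$ as independent of the band conditions and computes $p_1=p_2=\tfrac12$; for commensurate $a,b$ (where $p_1,p_2$ need not be $\tfrac12$, e.g.\ $p_1=\tfrac13$ for $a=2b$) it uses periodicity with common period $T$ and the mirror symmetry of $\xi_1,\xi_2$ about the period midpoints, combined with the sign flip of $\sin ka$ under $k\mapsto T-k$. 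Your single reflection $R:(x,y)\mapsto(-x,-y)$, which simultaneously flips the selector and exchanges the two half-angle conditions, packages both mechanisms into one statement --- $\mathbf 1_{S_\infty}\circ R=1-\mathbf 1_{S_\infty}$ a.e.\ --- that applies verbatim to the Lebesgue measure on $\mathbb T^2$ (incommensurate case) and to the arc-length measure on the closed geodesic, which is $R$-invariant (commensurate case, where $k\mapsto T-k$ realises $R$). This buys uniformity: you never need to evaluate $p_1,p_2$ or argue independence, only that the reference measure is $R$-invariant; the paper's route, conversely, makes the mechanism behind the figures more explicit (which of the two band families dominates). You also make explicit the energy-to-momentum conversion via the $2k$ weight, which the paper leaves implicit. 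Both arguments share the same technical soft spot, namely uniform control of the $\mathcal O(k^{-1})$ discrepancy near the singular set $\Xi$, which you flag honestly and which the paper dismisses with the remark that the error ``plays no role'' asymptotically; your sketch of why the disagreement set has vanishing density is at least as detailed as the paper's treatment.
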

\begin{proof}
From Proposition~\ref{cor:explicitBands}, or more precisely, from the band conditions \eqref{Pert,BandCon1} and \eqref{Pert,BandCon2}, we find that as $k\to\infty$, a number $k^2$ belongs to the set $S\gaga$ if $k$ satisfies either the condition
% ---------------------------------------------------- %
\begin{equation}\label{Pert,BandCon1he}
	k\in \left\{ \	\left\lvert   \frac{ \sin k(a+b)}{\sin k b}+1 +\mathcal{O}(k^{-1})\right\rvert   \leq   \left\lvert \frac{\sin ka }{\sin kb}  \right\rvert  \quad \cap \quad  (\widetilde{\gamma}-\gamma )\,\frac{ \sin ka}{2 k}>0  \right\},
\end{equation}
or
\begin{equation}\label{Pert,BandCon2he}
	k\in \left\{ \	\left\lvert   \frac{ \sin k(a+b)}{\sin k b}-1+\mathcal{O}(k^{-1})\right\rvert   \leq   \left\lvert \frac{\sin ka }{\sin kb}  \right\rvert  \quad \cap \quad  (\widetilde{\gamma}-\gamma )\,\frac{ \sin ka}{2 k}<0  \right\}.
\end{equation}
% ---------------------------------------------------- %
First of all, we note that even though the function $ (\widetilde{\gamma}-\gamma )\,\frac{ \sin ka}{2 k}$ in Eqs.~\eqref{Pert,BandCon1he} and \eqref{Pert,BandCon2he} is not periodic with respect to $k$, the measure of $k$ for which the function is positive or negative is still the same as $\sin ka$ since $k>0$ and $\widetilde{\gamma}-\gamma$ is a fixed real number; in other words, the probability that for a randomly chosen value of $k>0$, the function is positive or negative, is equal to $\frac12$.

It remains to find the probabilities that the first conditions in \eqref{Pert,BandCon1he} and \eqref{Pert,BandCon2he} hold; by simple manipulations, those can be rewritten in the more simplified forms
% ---------------------------------------------------- %
\begin{equation}\label{hecon1}
\xi_1(k):=\left\lvert	\sin k\left(\frac{a}{2}+b\right) \right\rvert- \left\lvert\sin \frac{ka}{2}\right\rvert\leq0,
\end{equation}
and
\begin{equation}\label{hecon2}
\xi_2(k):=\left\lvert	\cos k\left(\frac{a}{2}+b\right)\right\rvert - \left\lvert\cos  \frac{ka}{2}\right\rvert\leq0,
\end{equation}
% ---------------------------------------------------- %
respectively, both with the relative error $\mathcal{O}(k^{-1})$ which plays no role because the value of \eqref{probsigma} is determined by the asymptotic behavior only. Let us denote the probability that a randomly chosen $k$ satisfies \eqref{hecon1} and \eqref{hecon2} by $p_1$ and $p_2$, respectively. Note that the validity of the two inequalities is mutually exclusive up a set of measure zero due to the fact that if $|\sin x|\leq|\sin y|$ holds for two real $x$ and $y$, then we have $|\cos x|\geq|\cos y|$ in view of the basic identity $\sin^2 x+\cos^2 x=1$, and as a consequence, $p_1+p_2=1$.

It is thus sufficient to check that the probability that a random $k$ belongs to the sets \eqref{Pert,BandCon1he} and \eqref{Pert,BandCon2he} is $\frac12 p_1 $ and $\frac12 p_2\,$, respectively. This is obvious when $a$ and $b$ are incommensurate, because then one can regard the quantities $k\left(\frac{a}{2}+b\right)=:x$ and $ \frac{ka}2=:y$ in the arguments of the trigonometric functions in \eqref{hecon1} and \eqref{hecon2} with $k$ running over $\R_+$ as a pair of independent identically distributed random variables on the torus $[0,2\pi)^2$, and $\sin ka$ takes a fixed sign with probability $\frac12$. Moreover, the conditions \eqref{hecon1} and \eqref{hecon2} simplify in this case to $|\sin x|<|\sin y|$ and $|\cos x|<|\cos y|$, respectively, which obviously implies $p_1=p_2=\frac12$.

For commensurate $a$ and $b$, on the other hand, the last claim is not true in general; if $a=2b$, for instance, we have $p_1=\frac13$ and $p_2=\frac23$. The probabilities of belonging to the sets \eqref{Pert,BandCon1he} and \eqref{Pert,BandCon2he} are nevertheless still $\frac12 p_j,\, j=1,2$. To see that, note that the functions $\xi_i(k)$, $i=1,2$ are periodic with the same period $T$, and as a direct consequence of their evenness, they have mirror symmetry with respect to the midpoint of each cycle. Furthermore, one can easily show that $T$ is always an integer multiple of $t=\frac{2\pi}a$. 
To be more explicit, the first term in $\xi_i(k)$ has the period $T_1=\frac{2\pi }{a+2b} = Rt $ where $R=\frac{a}{a+2b}$ is by assumption a rational number which we can write as $R=\frac mn$, where $m$ and $n$ are integers; the second term in $\xi_i(k)$ has the period $T_2=t$. Therefore, the common period $T$ is $T=qt$ where $q$ is the least common integer multiple of $\frac mn$ and $1$, naturally an integer number. In combination with the fact that the function $k\mapsto \sin ka$, determining the sign of function $(\widetilde{\gamma}-\gamma )\,\frac{ \sin ka}{2 k}$ in \eqref{Pert,BandCon1he} and \eqref{Pert,BandCon2he}, is repeated over intervals of length $T$ and the above mentioned mirror symmetry of $\xi_i(k)$ with respect to the points $k=\frac T2$ (mod $T$) we arrive at the `one-half' claim for commensurate edges.
\end{proof}

%%%%%%%%%%%%%%%%%%%%%%%%%%%%%%%%%%%%%%%%%
\subsection*{Data availability statement}

Data are available in the article.

%%%%%%%%%%%%%%%%%%%%%%%%%%%%%%%%%%%%%%%%%
\subsection*{Conflict of interest}

The authors have no conflict of interest.

%%%%%%%%%%%%%%%%%%%%%%%%%%%%%%%%%%%%%%%%%

\subsection*{Acknowledgments}
M.B. and A.Kh. were supported by the Czech Science Foundation within the project 22-18739S. The work of P.E. was upported by by the European Union's Horizon 2020 research and innovation programme under the Marie Sklodowska-Curie grant agreement No 873071.

\end{document}